\numberwithin{equation}{section}
\newcommand{\Cov}{{\rm Cov}}
\newcommand{\e}{\varepsilon}
\newcommand{\Pb}{\mathbb{P}}
\newcommand{\E}{\mathbb{E}}
\newcommand{\R}{\mathbb{R}}
\newcommand{\N}{\mathbb{N}}
\newcommand{\Z}{\mathbb{Z}}
\newcommand{\Id}{\mathbbm{1}}
\newtheorem{prop}{Proposition}[section]
\newtheorem{thm}[prop]{Theorem}
\newtheorem{lem}[prop]{Lemma}
\newtheorem{cor}[prop]{Corollary}
\newtheorem{cla}[prop]{Claim}
\newtheorem{assumpt}[prop]{Assumption}
\newtheorem{rem}[prop]{Remark}
\newenvironment{remark}{\begin{rem}\normalfont}{\end{rem}}
\title{Decoupling and decay of two-point \\ functions in a two-species (T)ASEP}
\author{Patrik L.\ Ferrari\thanks{Institute for Applied Mathematics, Bonn University, Endenicher Allee 60, 53115 Bonn, Germany. Email: {\tt ferrari@uni-bonn.de}} \and
Sabrina Gernholt\thanks{Institute for Applied Mathematics, Bonn University, Endenicher Allee 60, 53115 Bonn, Germany. Email: {\tt sgernhol@uni-bonn.de}}
}
\date{}
\begin{document}

\maketitle

\begin{abstract}
We consider the two-species totally asymmetric simple exclusion process on $\Z$ with a translation-invariant stationary measure as the initial condition. We establish the asymptotic decoupling of the marginal height profiles along characteristic lines and prove the decay of the two-point functions in the large-time limit, thus confirming predictions of the nonlinear fluctuating hydrodynamics theory. Our approach builds on the queueing construction of the stationary measure introduced in~\cite{Ang06,FM07} and extends the theory of backwards paths for height functions developed in~\cite{BF22,FN24}. The arguments for asymptotic decoupling also apply to further homogeneous initial data, and the decay of the two-point functions is proven for the stationary two-species asymmetric simple exclusion process, beyond the totally asymmetric case.
\end{abstract}

\section{Introduction and main results} \label{section_introduction}

In this work, we study the large-time behaviour of two key observables in the two-species (totally) asymmetric simple exclusion process (TASEP or ASEP, respectively). First, for TASEP, we demonstrate the asymptotic decoupling of marginal height profiles for a class of spatially homogeneous initial conditions, including the stationary case. Second, we establish the decay of mixed space-time correlations, also known as two-point functions, for the stationary two-species ASEP.
Our results align with numerous insights for a broader class of multi-component models, which serve as the key motivation for this study. We outline this context in Section~\ref{section_motivation} and present our exact results for TASEP in Section~\ref{section_model_main_results}. Section~\ref{section_main_results_ASEP} extends the decay of mixed space-time correlations to ASEP.


\subsection{Motivation: Nonlinear fluctuating hydrodynamics and KPZ universality} \label{section_motivation}

The primary motivation for our work is the relation between the fluctuating hydrodynamics of multi-component lattice gases and coupled KPZ equations, see for instance \cite{FSS13}. Their approach can be summarised as follows.

One considers the fluctuations of the density fields in the lattice gas model around their stationary value and transforms the conservation laws into stochastic PDEs by expanding the current up to second order while incorporating noise and diffusion terms. By transitioning to normal modes, one generically obtains a system of coupled stochastic Burgers equations, whose integrated form corresponds to coupled KPZ equations. For distinct velocity parameters, the equations (approximately) decouple, indicating a diminishing correlation between distinct components. At the same time, correlations within each component are then expected to converge to the one-dimensional KPZ-universal limit.

This method is not restricted to the setting of~\cite{FSS13}, but provides a general framework for determining universal scaling exponents and limit functions for a variety of (multi-component) models. It has led to predictions comparable to those in~\cite{FSS13} in numerous works\footnote{In particular, one can identify other universality classes with distinct equations and scaling exponents beyond KPZ, see for instance~\cite{PSS15,PSSS15,SS15}.}, which are collectively referred to as \emph{nonlinear fluctuating hydrodynamics theory} (NLFH), see~\cite{vanB12,MS13,PSSS15,SW17,Spo13,Spo16,SS15} and the references therein.

For one-component growth models in the KPZ universality class, their large-time behaviour follows the statistics of the KPZ equation.
For specific models in $1+1$ dimensions, such as ASEP or TASEP, there exists a robust understanding of the emergence of universal objects. In particular, the scaling limit of the correlations in a stationary single-species TASEP has been determined exactly~\cite{BFP12,FS05a,PS01} and recently proven in ASEP~\cite{ACH24,LS25}. However, for multi-component models, while many numerical simulations exist (for instance in~\cite{RDKKS24,DDSMS14,KHS15,MS13,MS14,PSS15,SS15}), exact confirmations of the predictions from NLFH remain rare, especially for KPZ-like behaviour.

For the stationary AHR model,~\cite{FSS13} support their predictions by simulations without mathematical proofs. In~\cite{CGHS18,CGHSU22}, special initial conditions are considered and a decoupling of height functions is shown through an asymptotic analysis of an integral formula for the Green's function. Rigorous mathematical proofs yielding systems of coupled KPZ equations can be found in~\cite{CGMO24,CCGO26} for the ABC model and in~\cite{BFS21} for the multi-species weakly-asymmetric zero-range process, with no decoupling.

Below, we outline the interpretation of NLFH in~\cite{FSS13} and present its predictions for the special case of the stationary two-species TASEP, the model we rigorously analyse in the remainder of this work. The predictions for ASEP can be derived following the same procedure.

\paragraph{Multi-component lattice gas models.} Consider a lattice gas with $n$ components, where each component corresponds to a distinct particle type. Particles perform nearest-neighbour jumps on the integers $\Z$, with at most one particle per site. The jump rates are local, translation-invariant, and depend only on the particle type. We label the types by $\alpha\in\{1,\ldots,n\}$ and define
\begin{equation}
\eta_\alpha(j,t)=\left\{
\begin{array}{ll}
1 &\textrm{ if there is a particle of type }\alpha\textrm{ at site }j \textrm{ at time }t,\\
0& \textrm{ otherwise}.
\end{array}
\right.
\end{equation}
We assume that for any fixed $\vec{\rho}=(\rho_1,\ldots,\rho_n)$ (with $\sum_{i=1}^n \rho_i\leq 1$ and $\rho_i \in [0,1]$), there is an ergodic, translation-invariant stationary measure $\mu_{\vec{\rho}}$ such that $\rho_\alpha$ is the average density of the particles of type $\alpha$ under $\mu_{\vec{\rho}}$. This means that we have
\begin{equation}
\E_{\mu_{\vec{\rho}}}[\eta_\alpha(j,t)]=\rho_\alpha,\quad \alpha=1,\ldots,n,
\end{equation}
and
\begin{equation}
\lim_{L\to\infty}\frac{1}{2L}\sum_{j=-L+1}^L\eta_{\alpha}(j,t)=\rho_\alpha
\end{equation}
almost surely. Here, $\E_{\mu_{\vec{\rho}}}$ is the expectation with respect to the initial distribution $\mu_{\vec{\rho}}$.
We denote by ${\cal J}_\alpha(j,t)$ the instantaneous current of type-$\alpha$-particles from site $j$ to site $j+1$ at time $t$ and define
\begin{equation}
{\rm j}_\alpha(\vec\rho)=\E_{\mu_{\vec{\rho}}}[{\cal J}_\alpha(j,t)].
\end{equation}

The observables of interest are the time correlations
\begin{equation}
S_{\alpha,\beta}(j,t)=\E_{\mu_{\vec{\rho}}}[\eta_\alpha(j,t)\eta_\beta(0,0)]-\rho_\alpha\rho_\beta,\quad \alpha,\beta\in\{1,\ldots,n\}.
\end{equation}
They form an $n\times n$ matrix $S = (S_{\alpha,\beta})_{\alpha,\beta \in \{1, \dots, n\}}$ known as the two-point function. The function $S_{\alpha,\beta}(j,t)$ quantifies the impact of a perturbation in the density of type-$\beta$-particles at the origin at time $0$ on the average density of type-$\alpha$-particles at site $j$ at time $t$.
It fulfils
\begin{equation}
\sum_{j\in\Z} S(j,t)=\sum_{j\in\Z} S(j,0)= C,
\end{equation}
with $C$ being called the susceptibility matrix. Note that $C$ is symmetric and non-negative, and we assume $C>0$ (that is, $C$ is positive definite) to avoid having components that do not evolve over time. Moreover, defining the matrix $A$ with components
\begin{equation}
A_{\alpha,\beta}(\vec\rho)=\frac{\partial}{\partial \rho_\beta}{\rm j}_\alpha(\vec\rho),
\end{equation}
the following identity holds:
\begin{equation}\label{eq1}
A C = C A^{\rm T}.
\end{equation}
For multi-component particle systems, this has been proven in~\cite{TV03} for product stationary measures and in~\cite{GS11} in general. See also Appendix~A of~\cite{FSS13}. As discussed in that work, the variables $\vec\eta=(\eta_1,\ldots,\eta_n)$ are not the suitable coordinates for studying the system at the hydrodynamic level. Instead, one should consider the normal modes, which are obtained as follows. By $C=C^{\rm T}>0$ and \eqref{eq1}, the matrix $A$ has real eigenvalues $v_1, \dots, v_n$ and a non-degenerate system of left and right eigenvectors. The normal mode coordinates are defined as
\begin{equation}
\vec\xi=R \vec\eta,
\end{equation}
where $R$ is a matrix satisfying the relations
\begin{equation}\label{eq1.11}
R A R^{-1}={\rm diag}(v_1,\ldots,v_n)\quad\textrm{and}\quad R C R^{\rm T}=\Id.
\end{equation}
The two-point function for the normal modes is then given by
\begin{equation}
\begin{aligned}
S^{\#}_{\alpha,\beta}(j,t)=(R S R^{\rm T})_{\alpha,\beta}(j,t)&=\E_{\mu_{\vec{\rho}}}\left[(R\vec\eta)_\alpha(j,t)(R\vec\eta)_\beta(0,0)\right]- (R\vec\rho)_\alpha (R\vec\rho)_\beta\\
&=\E_{\mu_{\vec{\rho}}}\left[\xi_\alpha(j,t)\xi_\beta(0,0)\right]- (R\vec\rho)_\alpha (R\vec\rho)_\beta.
\end{aligned}
\end{equation}

We can now state the predictions from the nonlinear fluctuating hydrodynamics theory as in~\cite{FSS13}. Starting from macroscopic versions of the conservation laws
\begin{equation}
\partial_t \eta_\alpha(j,t)-{\cal J}_\alpha(j-1,t)+{\cal J}_\alpha (j,t) = 0,
\end{equation}
one derives a system of coupled stochastic Burgers equations for the density fields in normal modes. Under the transformation $R$, the drift terms are diagonalised: instead of involving the full matrix $A$ acting on the density vector, each drift reduces to the product of an eigenvalue $v_\alpha$ and the respective density component. In view of this, under the assumption that the drift velocities $v_1, \dots, v_n$ are all distinct,~\cite{FSS13} predict a decoupling of the dynamics.

For the two-point function $S^\#$, they argue that its scaled version should converge to a diagonal matrix with one entry given by the KPZ-universal limit observed in the one-component case. Moreover, the model-dependent parameter $\lambda_\alpha$ in the scaling is determined by the average current $\vec{\rm j}(\vec \rho)$ and the susceptibility matrix $C$.
More precisely, if we focus on the region around the speed of the normal mode $\alpha$, we have
\begin{equation}\label{eq2}
(\lambda_\alpha t)^{2/3} S^{\#}_{\beta,\gamma}(v_\alpha t+ w(\lambda_\alpha t)^{2/3},t)\simeq \delta_{\beta,\alpha}\delta_{\gamma,\alpha} f_{\rm KPZ}(w),
\end{equation}
where $f_{\rm KPZ}$ is the KPZ scaling function for one-component systems~\cite{PS02b}, see also \eqref{eq_KPZ_scaling_limit_correlation}, and $\lambda_{\alpha}$ is a non-universal coefficient given as follows. Denote the Hessian of ${\rm j}_\alpha$ by
\begin{equation}
H^{\alpha}_{\beta,\gamma}(\vec\rho)=\frac{\partial^2}{\partial \rho_\beta \partial \rho_\gamma}{\rm j}_\alpha(\vec\rho),
\end{equation}
and define the matrix
\begin{equation}
G^{\alpha}=\frac12 \sum_{\tilde\alpha=1}^nR_{\alpha,\tilde\alpha} (R^{-1})^{\rm T} H^{\tilde \alpha} R^{-1}.
\end{equation}
Then, the scaling coefficient is given by $\lambda_\alpha=2\sqrt{2}|G^{\alpha}_{\alpha,\alpha}|$. \\

As mentioned earlier,~\cite{FSS13} considered the so-called AHR model, which consists of two types of particles, denoted $+$ and $-$. The $+$ particles move to the right with a jump rate $\beta$, while the $-$ particles move to the left with a jump rate $\alpha$. The exchange of $(+,-)$ and $(-,+)$ occurs with rate $1$, while the exchange of $(-,+)$ and $(+,-)$ happens with rate $q<1$. For this model on a ring, the stationary measure is known~\cite{RSS00}. In~\cite{FSS13}, numerical simulations confirmed \eqref{eq2}. The totally asymmetric version of the AHR model (that is, $q=0$), but with a non-stationary initial condition, was further explored in~\cite{CGHS18,CGHSU22}. Using exact formulas, they showed a decoupling of the height functions associated with the normal modes.

Another model worth mentioning is the ABC model: each site is occupied by one particle of type $A$, $B$, or $C$. One can think of particles of type $C$ as holes, which brings the system back to the $n=2$ case. In~\cite{CGMO24} the authors consider the ABC model on a torus with weakly asymmetric jump rates (scaling with the observation time $t$), similar to the weak asymmetry scaling under which the asymmetric simple exclusion process approximates the KPZ equation. They showed rigorously that the normal mode fields converge to a system of decoupled stochastic PDEs. Imposing a strict hierarchy on the particles, each of the equations essentially corresponds to a one-dimensional KPZ equation. Refer to Section~1.4 of~\cite{CGMO24} for the specific jump rates they consider.

\paragraph{Prediction for the model analysed in this work.}
The system we consider can be viewed as a special case of the ABC model on the integers $\Z$, with strong asymmetry. Specifically, we have nearest-neighbour jumps to the right and prioritise the particle types in the order $A>B>C$. These properties characterise our model as a two-species TASEP, where particles of type $A$ correspond to first class particles, particles of type $B$ correspond to second class particles, and particles of type $C$ represent holes. Following~\cite{FSS13}, we denote the configurations of first and second class particles by $\eta_1$ and $\eta_2$ in this motivational part.

For our exact analysis, a detailed introduction of the model is provided in Section~\ref{section_model_main_results}. In particular, the translation-invariant stationary measure is known and has a queueing representation, see Section~\ref{section_main_results_stationary} and Section~\ref{section_queueing_representation}. For average densities $\vec \rho = (\rho_1,\rho_2)$, we have (see Appendix~\ref{Appendix_GHD})
\begin{equation} \label{eq.C}
C=\left(\begin{array}{cc}
  \rho_1(1-\rho_1) & -\rho_1(1-\rho_1) \\
  -\rho_1(1-\rho_1) & \rho_2(1-\rho_2)+2\rho_1(1-\rho_1)-2\rho_1\rho_2
  \end{array}\right).
\end{equation}
We also determine the average current $\vec{\rm j}(\vec \rho)= (\rho_1(1-\rho_1),\rho_2(1-\rho_2)-2\rho_1\rho_2)$,
and its Jacobian is
\begin{equation}
A=\left(
\begin{array}{cc}
  1-2\rho_1 & 0\\
  -2\rho_2 & 1-2(\rho_1+\rho_2)
\end{array}
\right).
\end{equation}

To reformulate the prediction~\eqref{eq2} in our setting, we transition to normal modes and compute the model-dependent coefficients.
The two conditions in \eqref{eq1.11} are satisfied with
\begin{equation}R=\left(
\begin{array}{cc}
\frac{1}{\sqrt{\rho_1(1-\rho_1)}} & 0 \\
\frac{1}{\sqrt{(\rho_1+\rho_2)(1-\rho_1-\rho_2)}} & \frac{1}{\sqrt{(\rho_1+\rho_2)(1-\rho_1-\rho_2)}} \\
\end{array}
\right)
\end{equation}
and $\vec{v}=(1-2\rho_1,1-2(\rho_1+\rho_2))$.
This leads to
\begin{equation}
G^1=\left(
\begin{array}{cc}
 - \sqrt{\rho_1(1-\rho_1)} & 0\\
  0 & 0
\end{array}
\right),\quad G^2=\left(
\begin{array}{cc}
  0 & 0\\
  0 & -\sqrt{(\rho_1+\rho_2)(1-\rho_1-\rho_2)}
\end{array}
\right).
\end{equation}
Thus, the normal mode variables are
\begin{equation}\label{eq1.22}
\xi_1(j,t)=\frac{\eta_1(j,t)}{\sqrt{\rho_1(1-\rho_1)}},\quad \xi_2(j,t)=\frac{\eta_1(j,t)+\eta_2(j,t)}{\sqrt{(\rho_1+\rho_2)(1-\rho_1-\rho_2)}},
\end{equation}
and the two components propagate with speeds $v_1=1-2\rho_1$ and $v_2=1-2(\rho_1+\rho_2)$, respectively.
The fact that these are the normal mode variables is also evident from the definition of the model: up to scaling, they correspond to the configurations of first class particles and all particles. These marginal processes both exhibit single-species TASEP dynamics.

Finally, the prediction \eqref{eq2} states that, for
\begin{equation}
\lambda_1=2\sqrt{2}\sqrt{\rho_1(1-\rho_1)},\quad
\lambda_2=2\sqrt{2}\sqrt{(\rho_1+\rho_2)(1-\rho_1-\rho_2)},
\end{equation}
we have
\begin{equation}\label{eq3}
\lim_{t\to\infty}(\lambda_\alpha t)^{2/3} S^{\#}_{\alpha,\alpha}(v_\alpha t+ w(\lambda_\alpha t)^{2/3},t) =  f_{\rm KPZ}(w),\quad \alpha=1,2,
\end{equation}
and the other entries of $S^{\#}$ converge to zero on the same scale. The convergence of the diagonal entries to $f_{\rm KPZ}$ has been proven in the weak sense in~\cite{BFP12} by building on the arguments developed in~\cite{FS05a,PS01,PS02b}. See \eqref{eq_KPZ_scaling_limit_correlation} for the precise statement, expressed in a slightly different notation. The coupling between the two normal modes is in the off-diagonal entries of the two-point function. In Theorem~\ref{thm_decay_corr} and Corollary~\ref{corollary_decay_corr}, we establish the convergence to zero for these off-diagonal functions. The corresponding results for ASEP can be found in Section~\ref{section_main_results_ASEP}.

For simplicity, in Section~\ref{section_model_main_results} and the rest of the paper we will not normalise the linear combination of the normal modes as in \eqref{eq1.22}, but just keep $\eta_1$ and $\eta_1+\eta_2$ respectively. This implies that in the limit of the diagonal terms there will be an additional factor $\chi_1=\rho_1(1-\rho_1)$ and $\chi_2=(\rho_1+\rho_2)(1-\rho_1-\rho_2)$, consistent with \eqref{eq_KPZ_scaling_limit_correlation} and previous papers.\\

This concludes our motivation, which sets the context for our work in hydrodynamic theory. In the following section, we provide background information on the two-species TASEP, present our main results, and compare our methods to those recently developed in~\cite{ACH24} for the coloured ASEP. Subsequently, we extend some of our arguments to ASEP as well.

\subsection{Model formulation and exact results for the two-species TASEP} \label{section_model_main_results}

\paragraph{The two-species TASEP.} The totally asymmetric simple exclusion process (TASEP) was first introduced by Spitzer in~\cite{Spi70} and is one of the most studied interacting particle systems in one spatial dimension. We consider a two-species (or two-coloured), continuous-time TASEP on the integer lattice $\Z$. This model consists of two types of particles positioned on $\Z$, where each site either hosts a single particle or is empty. In the second case, we say the site contains a hole. Particles attempt to jump one step to the right after independent, exponential waiting times with rate one. The two types of particles, referred to as first and second class, are prioritised differently. A jump attempt by a first class particle is successful if the site to its right contains either a hole or a second class particle; in such cases, the particle swaps positions with the occupant of the neighbouring site. Otherwise, it remains in its current position. A second class particle only jumps to the right if the neighbouring site is empty, and stays put otherwise.

Below, we rephrase TASEP as a random growth model. As such, it serves as a prototypical representative of the KPZ universality class~\cite{KPZ86} in $1+1$ dimensions. For both multi-species and single-species versions of the model, the scaling limits have been determined in terms of KPZ fixed points and the directed landscape~\cite{ACH24,DOV22,DV21,MQR17}.

Our results pertain to the two-species TASEP under different initial conditions. The main focus of this work is on the stationary model, which is explored in Section~\ref{section_main_results_stationary}. Other translation-invariant deterministic or random initial configurations are discussed in  Section~\ref{section_main_results_deterministic}.

\subsubsection{Asymptotic decoupling in the stationary two-species TASEP} \label{section_main_results_stationary}

\paragraph{Model and notation.} Consider the stationary two-species TASEP with a translation-invariant stationary measure as initial condition, where the first and the second class particles have constant densities $\rho_1 \in (0,1)$ and $\rho_2 \in (0,1-\rho_1)$. We denote this measure by $\mu^{\rho_1,\rho_2}$. For its existence, uniqueness, and other properties, we refer to~\cite{Ang06,FKS91,FM07,Lig76,Spe94}, along with additional references listed below.
A sample of $\mu^{\rho_1,\rho_2}$ is a configuration $\eta : \Z \to \{1,2,+\infty\}$ with
\begin{equation} \begin{aligned}
 \eta(i) = \begin{cases}
1 &\text{if there is a first class particle at site } i, \\ 2  &\text{if there is a second class particle at site } i, \\ +\infty &\text{if there is a hole at site } i. \end{cases} \end{aligned} \end{equation}
With the same interpretation, $\eta_t$ denotes the configuration of the stationary two-species TASEP started from $\eta_0 = \eta$ at the time $t \geq 0$. We define the configurations of first class, respectively of all particles, by
\begin{equation} \label{eq_def_marginal_configurations}
\eta^{\rho_1}_t(i) = \mathbbm{1}_{\{\eta_t(i)=1\}}, \quad \eta_t^{\rho_1+\rho_2}(i) = \mathbbm{1}_{\{\eta_t(i) \in \{1,2\}\}}.
\end{equation}
Now, holes are represented by $0$ instead of $+\infty$. The marginal processes $\eta_t^{\rho_1}$ and $\eta_t^{\rho_1+\rho_2}$ are two single-species TASEPs under basic coupling, starting from the initial configurations $\eta^{\rho_1}(i) = \mathbbm{1}_{\{\eta(i)=1\}}$ and $\eta^{\rho_1+\rho_2}(i) = \mathbbm{1}_{\{\eta(i) \in \{1,2\}\}}$. They are stationary again, since the corresponding marginals of $\mu^{\rho_1,\rho_2}$ are the Bernoulli product measures with densities $\rho_1$ and $\rho_1+\rho_2$.

For $\rho \in \{\rho_1,\rho_1+\rho_2\}$, we consider the height function
\begin{equation} \label{eq_def_height_fct}
\begin{aligned}
h^{\rho}(j,t) = \begin{cases} 2N_t^\rho + \sum_{i=1}^j (1-2\eta_t^\rho(i)),  & j \geq 1, \\
2N_t^\rho,  & j=0, \\ 2N_t^\rho - \sum_{i=j+1}^0(1-2\eta_t^\rho(i)),  & j \leq -1,
\end{cases}
\end{aligned}
\end{equation}
where $N_t^\rho$ denotes the respective number of particles that jumped from site $0$ to site $1$ until time $t$. Each time a particle jumps, a local minimum of the height function turns into a local maximum. We rescale
\begin{equation} \label{eq_rescaled_height_fct}
	\mathfrak{h}^\rho(w,t) = \frac{h^\rho((1-2\rho)t+2w \chi^{1/3} t^{2/3},t) - (1-2\chi)t - 2w(1-2\rho)\chi^{1/3}t^{2/3}}{-2\chi^{2/3}t^{1/3}}
\end{equation}
for $w \in \R$ and $\chi = \rho(1-\rho)$.
It is known by~\cite{FS05a} that
\begin{equation} \label{eq_conv_to_baik_rains_distr}
\lim_{t \to \infty} \Pb ( \mathfrak{h}^{\rho}(w,t) \leq s) = F_{\text{BR},w}(s),
\end{equation}
where $F_{\text{BR},w}$ denotes the Baik-Rains distribution function with parameter $w$ (see~\cite{BFP09} for the joint distribution in $w$).

Furthermore, we study the two-point function of the process $\eta_t$.
We define the correlation matrix $S^\#$ of the densities by
\begin{equation} \label{eq_correlation_matrix}
\begin{aligned}
&\begin{pmatrix}
S^\#_{1,1}(j,t) & S^\#_{1,2}(j,t)\\S^\#_{2,1}(j,t) &  S^\#_{2,2}(j,t)
\end{pmatrix}
\\ &=
\begin{pmatrix}
\langle \eta_t^{\rho_1}(j) \eta_0^{\rho_1}(0)\rangle -\rho_1^2 & \langle \eta_t^{\rho_1}(j)\eta_0^{\rho_1+\rho_2}(0)\rangle - \rho_1(\rho_1+\rho_2) \\
\langle \eta_t^{\rho_1+\rho_2}(j)\eta_0^{\rho_1}(0)\rangle - \rho_1(\rho_1+\rho_2) & \langle\eta_t^{\rho_1+\rho_2}(j) \eta_0^{\rho_1+\rho_2}(0)\rangle - (\rho_1+\rho_2)^2
\end{pmatrix}
\end{aligned}
\end{equation}
for $j \in \Z$ and $t \geq 0$, where $\langle \cdot \rangle$ denotes the expectation with respect to $\mu^{\rho_1,\rho_2}$. The diagonal terms $S^\#_{1,1}(j,t)$ and $S^\#_{2,2}(j,t)$ correspond to the correlations in stationary single-species TASEPs and, under suitable scaling, converge to a KPZ-universal limit, namely the second derivative of the second moment of the Baik-Rains distribution with varying parameters.
For $i \in \{1,2\}$ and $\rho=\rho(i) \in \{\rho_1,\rho_1+\rho_2\}$, we have
\begin{equation} \label{eq_KPZ_scaling_limit_correlation}
\lim_{t \to \infty} 2 \chi^{1/3} t^{2/3} S^\#_{i,i}((1-2\rho)t+2w \chi^{1/3} t^{2/3},t) = \frac{\chi}{4} g^{''}_{\text{sc}}(w)=\chi f_{\rm KPZ}(w)
\end{equation}
when integrated against smooth functions of $w$ with compact support, where $g_{\text{sc}}(w) = \int_\R s^2 d F_{\text{BR},w}(s)$.
This has been established in Corollary~2 of~\cite{BFP12} and was previously conjectured in~\cite{FS05a}. It is compatible with \eqref{eq3} since in \eqref{eq1.22} the normal modes are normalised by $\chi^{-1/2}$. For speeds $v \neq 1-2\rho$, the rescaled correlations converge to zero. This follows from the fact that $\chi^{-1} S(j,t)$ equals the probability that a second class particle, initially at the origin, is at site $j$ at time $t$, see~\cite{PS01}, combined with the property that $\int_\R f_{\rm KPZ}(w) dw = 1$.

\paragraph{Main results and key techniques.} The primary goal of this work is to derive the asymptotic decoupling of the height profiles as well as the decay of mixed correlations for the processes $\eta^{\rho_1}_t$ and $\eta^{\rho_1+\rho_2}_t$. Our first result confirms that as marginals of the two-species TASEP, due to their distinct particle densities, $h^{\rho_1}$ and $h^{\rho_1+\rho_2}$ decouple in the large-time limit along characteristic lines.

\begin{thm} \label{thm_asymptotic_decoupling_height_functions_stationary}
	The height fluctuations of $\mathfrak{h}^{\rho_1}(w,t)$ and $\mathfrak{h}^{\rho_1+\rho_2}(w,t)$ are asymptotically independent: for any $w,z,r,s \in \R$, it holds
	\begin{equation} \label{eq_thm_asymptotic_decoupling_height_functions_stationary} \begin{aligned}
	 \lim_{t \to  \infty} \Pb ( \mathfrak{h}^{\rho_1}(w,t) \leq s, \mathfrak{h}^{\rho_1+\rho_2}(z,t) \leq r )  & = \lim_{t \to  \infty} \Pb ( \mathfrak{h}^{\rho_1}(w,t) \leq s) \Pb( \mathfrak{h}^{\rho_1+\rho_2}(z,t) \leq r ) \\ &= F_{\textup{BR},w}(s)F_{\textup{BR},z}(r).
	\end{aligned}
	\end{equation}
\end{thm}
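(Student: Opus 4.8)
The plan is to prove the joint convergence by a localisation-and-resampling argument. Since the marginal convergences $\Pb(\mathfrak{h}^{\rho}(w,t)\le s)\to F_{\textup{BR},w}(s)$ for $\rho\in\{\rho_1,\rho_1+\rho_2\}$ are already provided by \eqref{eq_conv_to_baik_rains_distr}, it suffices to establish the asymptotic independence of the two events, i.e.\ the factorisation in the first line of \eqref{eq_thm_asymptotic_decoupling_height_functions_stationary}. The geometric input is that the two marginals are observed along characteristic lines of distinct slopes $1-2\rho_1$ and $1-2(\rho_1+\rho_2)$, whose spatial separation at time $s$ equals $2\rho_2 s$ and thus grows linearly in $s$. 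I would therefore try to show that each rescaled height function is, up to an error that vanishes as $t\to\infty$, a functional of the graphical construction (the Poisson clocks) and of the initial configuration $\eta_0$ inside a thin space-time tube around its own characteristic, and that these two tubes are asymptotically disjoint.

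Concretely, the key steps are as follows. First, using the theory of backwards paths for the marginal height functions from \cite{BF22,FN24}, I would show that, with probability tending to one, the backwards path issued from the space-time point $((1-2\rho)t+2w\chi^{1/3}t^{2/3},t)$ stays within a tube $T_\rho$ of transversal width $O(t^{2/3})=o(t)$ around the characteristic line through the origin of slope $1-2\rho$, together with the corresponding estimate controlling the probability that the path exits the tube. Second, I would argue that replacing the clocks and initial data outside $T_\rho$ by an independent copy does not affect $\mathfrak{h}^{\rho}(w,t)$ in the $t\to\infty$ limit, so that $\mathfrak{h}^{\rho}(w,t)$ may be approximated by a $T_\rho$-measurable random variable. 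Third, since $T_{\rho_1}$ and $T_{\rho_1+\rho_2}$ overlap only where $2\rho_2 s\lesssim t^{2/3}$, that is, in an $O(t^{2/3})$-neighbourhood of the origin, I would resample the driving clocks on the two tubes independently away from this overlap, producing two height functions with the correct marginals but driven by independent randomness; the factorisation then follows by combining this independence with the marginal limits in \eqref{eq_conv_to_baik_rains_distr}.

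The hard part will be controlling the unavoidable overlap of the two tubes near the origin: both backwards paths emanate from within $O(t^{2/3})$ of the spatial origin at time $0$, so they read correlated portions of $\eta_0$, and in the stationary case the random initial condition genuinely contributes to the fluctuations, which is precisely why the limit is the Baik--Rains distribution. I expect that resolving this requires two ingredients: a quantitative estimate showing that the fluctuations are dominated by the bulk of the backwards path, where the tubes are disjoint, so that the contribution of the shared region near the origin is negligible on the $t^{1/3}$ scale; and a use of the queueing representation of $\mu^{\rho_1,\rho_2}$ from \cite{Ang06,FM07} to disentangle the correlation between the first-class marginal $\eta_0^{\rho_1}$ and the all-particle marginal $\eta_0^{\rho_1+\rho_2}$ on that overlap. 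As a consistency check, the same tube-disjointness mechanism should simultaneously yield the decay of the off-diagonal mixed correlations on the characteristic scale, which is the content of Theorem~\ref{thm_decay_corr}.
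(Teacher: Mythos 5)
Your proposal is correct in outline, but it follows a genuinely different route from the paper's own proof of Theorem~\ref{thm_asymptotic_decoupling_height_functions_stationary}. The paper does not work from time $0$ at all: it uses the concatenation property \eqref{eq_concatenation_property_height_fct} to restart the process at an intermediate time $t^\nu$ with $\nu\in(\tfrac23,1)$, so that the two height functions read the time-$t^\nu$ configuration on two \emph{disjoint} windows of width $\mathcal{O}(t^{2/3+\e})$ separated by a distance of order $2\rho_2 t^\nu$. Stationarity makes $\eta_{t^\nu}\sim\mu^{\rho_1,\rho_2}$, and then Lemma~\ref{lemma_second_class_particle_in_interval_with_high_prob} (a second class particle sits between the windows with high probability) together with Lemma~\ref{lemma_stationary_measure_independence_second_class_p} (exact independence of the configuration on either side of a second class particle) gives \emph{exact} independence of the data read on the two windows, while the post-$t^\nu$ clocks decouple because the corresponding tubes are disjoint; the tube overlap near the origin, which is the hard part of your plan, never has to be confronted. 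Your route — decoupling the \emph{initial} data via the queueing representation ($h^{\rho_1}=\hat h^{\rho_1}+\delta_h$ with $\hat h^{\rho_1}$ independent of $h^{\rho_1+\rho_2}$ and $\delta_h$ of size $o(t^{1/3})$ on the relevant window) and arguing that the shared dynamical randomness at times $\lesssim t^{2/3}$ is negligible — is precisely the strategy the paper uses elsewhere, namely Lemma~\ref{lemma_formula_for_IC_process_M}, Assumption~\ref{assumption_decoupling_general_IC}(c) and the proof of Theorem~\ref{thm_asymptotic_decoupling_height_functions_general} in Section~\ref{section_decoupling_random_IC}, where the stationary measure is verified to be an admissible instance. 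So your plan is viable, and it buys robustness (it extends to non-stationary homogeneous initial data, for which the second-class-particle argument is unavailable), at the cost of the extra quantitative step you flag: to make ``the overlap contribution is negligible'' rigorous one effectively reintroduces an intermediate time $t^\nu$, $\nu>\tfrac23$, replaces the time-$t^\nu$ increments by time-$0$ increments up to $o(t^{1/3})$ errors via translation invariance and one-point bounds, and only then invokes the queueing decomposition; one also needs continuity of $F_{\textup{BR}}$ to remove the $\delta$-approximations in the final limit. One caution on phrasing: your claim that ``the fluctuations are dominated by the bulk of the backwards path'' is only true of the Poisson clocks — the initial data within $\mathcal{O}(t^{2/3})$ of the origin contributes at full order $t^{1/3}$ (this is why Baik--Rains appears), and your argument is complete only because your second ingredient handles that contribution separately.
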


The proof of Theorem~\ref{thm_asymptotic_decoupling_height_functions_stationary} in Section~\ref{section_asymptotic_decoupling_stationary} relies on two main ingredients: (a) a local independence of configurations sampled from $\mu^{\rho_1,\rho_2}$, as stated in Lemma~\ref{lemma_stationary_measure_independence_second_class_p}, and (b) the localisation of backwards paths in both a stationary (single-species) TASEP and a TASEP with step initial condition\footnote{In the step initial condition, all sites in $\Z_{\leq 0}$ are occupied by particles, while all sites in $\N$ are empty.}.

For TASEP height functions, the notion of backwards paths has been established in~\cite{BF22,FN24}. For the step initial condition, their localisation is given in Proposition~4.9 of~\cite{BF22}. In Proposition~\ref{prop_localisation_backwards_geodesics_general}, we show that by a comparison to paths in a TASEP with step initial condition, we can localise the backwards paths in a single-species TASEP with any initial data, provided that their endpoints are controlled. This gives the localisation of backwards paths in the stationary case in Corollary~\ref{cor_localisation_backwards_geodesics_stationary_TASEP}. \\

Our second observation is that, under KPZ-scaling, the off-diagonal terms of $S^\#$ vanish in the large-time limit. Together with \eqref{eq_KPZ_scaling_limit_correlation}, it rigorously confirms the prediction \eqref{eq3}.
The starting point of the analysis is the following new expression for the sum of the off-diagonals, which holds true not only for TASEP but also for ASEP.
\begin{prop} \label{prop_mixed_correlations_formula} For any $t, \tilde{t} \geq 0$ and $i,x,\tilde{x} \in \Z$, it holds
\begin{equation} \label{eq_prop_mixed_correlations_formula}
S^\#_{1,2}(x+i,t) + S^\#_{2,1}(\tilde{x}+i,\tilde{t})
 = \frac{1}{4} \Delta \textup{Cov}(h^{\rho_1}(x+i,t), h^{\rho_1+\rho_2}(\tilde{x}+i,\tilde{t}))
 \end{equation}
where $\Delta$ is the discrete Laplace operator given by $(\Delta f)(i) = f(i+1)-2f(i)+f(i-1)$.
\end{prop}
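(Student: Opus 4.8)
The plan is to evaluate the right-hand side directly and collapse it onto the two two-point functions, using only two model-robust inputs (so that the argument also covers ASEP): the height--occupation dictionary and the microscopic conservation law. I first record the exact identities $h^{\rho}(j,s)-h^{\rho}(j-1,s)=1-2\eta^{\rho}_s(j)$ and hence $\frac12(h^\rho(j+1,s)-2h^\rho(j,s)+h^\rho(j-1,s))=\eta^\rho_s(j)-\eta^\rho_s(j+1)$, valid for all $j\in\Z$, $s\ge0$ and $\rho\in\{\rho_1,\rho_1+\rho_2\}$, together with the current representation $h^\rho(j,s)=h^\rho(j,0)+2J^\rho_j(s)$, where $J^\rho_j(s)$ is the net $\rho$-current across the bond $(j,j+1)$ in $[0,s]$ and $J^\rho_0(s)=N_s^\rho$. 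Everything on both sides then becomes an expression in the occupation variables, and spatial translation invariance together with time-stationarity lets me write every mixed density correlation as a shift of $S^\#_{1,2}$ or $S^\#_{2,1}$; in particular $\Cov(\eta^{\rho_1}_t(m),\eta^{\rho_1+\rho_2}_{\tilde t}(n))=S^\#_{2,1}(n-m,\tilde t-t)$.

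Writing $A_i=h^{\rho_1}(x+i,t)$ and $B_i=h^{\rho_1+\rho_2}(\tilde x+i,\tilde t)$ and expanding the diagonal Laplacian by bilinearity of the covariance through the increments $A_{i+1}-A_i$ and $B_{i+1}-B_i$, the quantity $\Delta\Cov(A_i,B_i)$ splits into two ``corner'' terms and two ``cross'' terms,
\begin{equation*}
\Delta\Cov(A_i,B_i)=\Cov(A_i,\Delta B_i)+\Cov(\Delta A_i,B_i)+\Cov(A_{i+1}-A_i,B_{i+1}-B_i)+\Cov(A_i-A_{i-1},B_i-B_{i-1}),
\end{equation*}
where $\Delta A_i,\Delta B_i$ denote the spatial Laplacians. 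By the dictionary the two corner terms equal $4\Cov(\eta^{\rho_1}_t(\cdot),\eta^{\rho_1+\rho_2}_{\tilde t}(\cdot))$ at the two relevant sites, and by stationarity both reduce to $S^\#_{2,1}(\tilde x-x,\tilde t-t)$, contributing $8\,S^\#_{2,1}(\tilde x-x,\tilde t-t)$ in total. For the cross terms I would substitute the telescoping form $h^{\rho_1}(x+i,t)=2N_t^{\rho_1}-2\sum_{k=1}^{x+i}\eta^{\rho_1}_t(k)+\mathrm{const}$ (and its analogue for $B_i$). The finite sums over occupations telescope against the local density differences produced by $\Delta A_i$ and $\Delta B_i$, leaving boundary two-point functions of the form $S^\#_{2,1}(\cdot,\tilde t-t)$ plus the genuinely new current--density covariances $\Cov(N_t^{\rho_1},\eta^{\rho_1+\rho_2}_{\tilde t}(\cdot))$ and $\Cov(\eta^{\rho_1}_t(\cdot),N_{\tilde t}^{\rho_1+\rho_2})$.

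The decisive step, which I expect to be the main obstacle, is the evaluation of these current--density covariances. Here I would use the conservation law in its telescoped form $N_s^\rho=J^\rho_M(s)+\sum_{j=1}^{M}(\eta^\rho_s(j)-\eta^\rho_0(j))$ and, inside the covariance, let $M\to\infty$: the far-bond current $J^\rho_M(s)$ decorrelates from the local density, which is precisely where decay/summability of the mixed correlations enters (it forces $\Cov(J^\rho_M(s),\eta^{\rho'}_{\tilde t}(m))\to0$). This yields
\begin{equation*}
\Cov(N_t^{\rho_1},\eta^{\rho_1+\rho_2}_{\tilde t}(m))=\sum_{j\ge1}\big(S^\#_{2,1}(m-j,\tilde t-t)-S^\#_{2,1}(m-j,\tilde t)\big),
\end{equation*}
and the analogous identity with $S^\#_{1,2}(\cdot,t)$ in place of $S^\#_{2,1}(\cdot,\tilde t)$ for the second covariance. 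The key point is that the conservation law converts the ``wrong-time'' argument $\tilde t-t$ into the ``right-time'' arguments $\tilde t$ and $t$.

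After one further telescoping, the $S^\#_{2,1}(\cdot,\tilde t-t)$ pieces cancel the boundary terms from the previous step as well as the corner contribution $8\,S^\#_{2,1}(\tilde x-x,\tilde t-t)$, while the $S^\#_{2,1}(\cdot,\tilde t)$ and $S^\#_{1,2}(\cdot,t)$ pieces collapse (again by telescoping, using spatial decay to kill the boundary-at-infinity terms) to exactly $S^\#_{2,1}(\tilde x+i,\tilde t)$ and $S^\#_{1,2}(x+i,t)$. Dividing by $4$ gives the claim. The two technical matters to handle with care are the convergence of the infinite telescoping sums and the vanishing of the current-at-infinity covariance, both of which follow from integrability of the two-point function; the sign bookkeeping in the two cross terms is routine but must be carried out consistently for general $x,\tilde x,i$ (the displayed telescopings assume $x+i,\tilde x+i\ge1$, the remaining cases being identical).
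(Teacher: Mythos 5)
Your algebraic skeleton is sound: the identity splitting $\Delta\Cov(A_i,B_i)$ into two corner and two cross terms is correct, the corner terms do contribute $8\,S^\#_{2,1}(\tilde x-x,\tilde t-t)$, and carrying out your telescopings one finds that the cancellation indeed closes up to exactly $4S^\#_{1,2}(x+i,t)+4S^\#_{2,1}(\tilde x+i,\tilde t)$. The genuine gap is the step you yourself flag as decisive: the evaluation of the current--density covariances. Your formula for $\Cov(N_t^{\rho_1},\eta^{\rho_1+\rho_2}_{\tilde t}(m))$ requires (i) $\Cov(J^{\rho}_M(s),\eta^{\rho'}_{s'}(m))\to 0$ as $M\to\infty$, and (ii) pointwise spatial decay $S^\#_{a,b}(n,s)\to 0$ as $|n|\to\infty$ at the fixed times $\tilde t-t$, $\tilde t$ and $t$ (without this your telescoping sums do not converge to the claimed boundary values). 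Neither of these ``follows from integrability of the two-point function'' as you assert: decay of the space-time correlations of the stationary two-species process is a nontrivial, model-specific fact, not an assumption of the proposition, and it is not available for free at this point. In this paper such decay is established only later, only at time zero, and by real work (for TASEP via the queueing representation and the second-class-particle statements, Lemma~\ref{lemma_stationary_measure_independence_second_class_p} and Lemma~\ref{lemma_second_class_particle_in_interval_with_high_prob}; for ASEP via geometric ergodicity of the queue length in Section~\ref{section_decay_correlations_ASEP}); at positive times one would additionally need a finite-speed-of-propagation argument to reduce to the static measure. Since Proposition~\ref{prop_mixed_correlations_formula} is itself an ingredient of the decay results, importing decay-type hypotheses into its proof is also structurally awkward.

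The paper's proof shows that none of this analytic input is needed, because the statement is an exact, finite, algebraic identity valid for all $t,\tilde t\geq 0$. Instead of telescoping the current out to infinity, the paper expands $\Delta\E[h\tilde h]$ through the height--occupation dictionary, splits $\tilde h=2\tilde N_{\tilde t}+(\tilde h-2\tilde N_{\tilde t})$, and then uses spatial translation invariance of the joint law to write $\E[\eta_t(j+1)\tilde N_{\tilde t}]=\E[\eta_t(j)\tilde N^-_{\tilde t}]$, where $\tilde N^-_{\tilde t}$ is the current across the bond $(-1,0)$; the conservation law is then invoked only across this single bond, $\tilde N_{\tilde t}-\tilde N^-_{\tilde t}=\tilde\eta_0(0)-\tilde\eta_{\tilde t}(0)$, which is a pathwise identity. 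The $(\tilde h-2\tilde N_{\tilde t})$ pieces are finite sums of occupation variables handled by the same one-site shift. Every sum stays finite, so the argument applies verbatim to ASEP with the net current, with no mixing or propagation estimates. To salvage your route you would either have to prove (i) and (ii) --- substantial extra machinery --- or replace the $M\to\infty$ telescoping by this one-bond comparison, which is the same conservation-law idea applied locally rather than at infinity.
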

Proposition~\ref{prop_mixed_correlations_formula} generalises the formula for the variance, see Proposition 4.1 of~\cite{PS01}, and is proven in Section~\ref{section_formula_mixed_correlations}.

Using \eqref{eq_prop_mixed_correlations_formula} along with several specific properties of the stationary measure $\mu^{\rho_1,\rho_2}$ from Section~\ref{section_stationary_measure} and Section~\ref{section_mixed_correlations}, we show that
\begin{equation}
t^{2/3} S^\#_{1,2}((1-2\rho_1)t+w t^{2/3},t) \text{ and } t^{2/3} S^\#_{2,1}((1-2(\rho_1+\rho_2))t+w t^{2/3},t)
\end{equation}
weakly converge to zero. As before, this means they converge to zero when integrated against smooth functions of $w$ with compact support. The same holds for general speeds $v$ not equal to $1-2\rho_1$ or $1-2(\rho_1+\rho_2)$, respectively; see Remark~\ref{remark_general_speeds}.

\begin{thm} \label{thm_decay_corr}
Let $\phi:\R \to \R$ be a smooth function with compact support. Then, it holds
\begin{equation}
\lim_{t \to \infty} t^{-2/3} \sum_{w \in t^{-2/3} \Z} \phi(w) t^{2/3} S^\#_{2,1}((1-2(\rho_1+\rho_2))t+w t^{2/3},t) = 0.
\end{equation}
\end{thm}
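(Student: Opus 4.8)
The plan is to start from the identity in Proposition~\ref{prop_mixed_correlations_formula} and exploit that its right-hand side is a discrete Laplacian, which after summation against a smooth $\phi$ can be transferred onto the test function. I would first rewrite the target as $\sum_{j}\phi((j-v_2 t)t^{-2/3})\,S^\#_{2,1}(j,t)$ with $v_2=1-2(\rho_1+\rho_2)$, and then choose the free parameters in \eqref{eq_prop_mixed_correlations_formula} as $\tilde t=t$, with $\tilde x$ placing $\tilde x+i$ on the slow characteristic $v_2 t$, and $x$ with $x-\tilde x=\lfloor 2\rho_2 t\rfloor=\lfloor(v_1-v_2)t\rfloor$, so that $x+i$ sits on the fast characteristic $v_1 t=(1-2\rho_1)t$. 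With this choice, summing the left-hand side of \eqref{eq_prop_mixed_correlations_formula} against $\phi$ produces exactly $J_{2,1}[\phi]+J_{1,2}[\phi]$, where $J_{1,2}[\phi]=\sum_{k}\phi((k-v_1 t)t^{-2/3})\,S^\#_{1,2}(k,t)$ is the companion quantity for $S^\#_{1,2}$ centred on its own characteristic.

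Next I would apply Abel summation twice. Since $\psi_i:=\phi((\tilde x+i-v_2 t)t^{-2/3})$ has compact support in $i$, the boundary terms vanish and $\sum_i\psi_i\,\Delta_i g_i=\sum_i(\Delta\psi)_i\,g_i$, moving the Laplacian onto $\phi$ and yielding the gain $(\Delta\psi)_i=O(t^{-4/3})$. Writing the two height functions in the rescaled variables of \eqref{eq_rescaled_height_fct}, the deterministic parts cancel in the covariance, giving $\Cov(h^{\rho_1}(x+i,t),h^{\rho_1+\rho_2}(\tilde x+i,t))=4\chi_1^{2/3}\chi_2^{2/3}t^{2/3}\,\Cov(\mathfrak{h}^{\rho_1}(w_1(i),t),\mathfrak{h}^{\rho_1+\rho_2}(w_2(i),t))$, where $w_1(i),w_2(i)$ are affine in $i$ and $\chi_1=\rho_1(1-\rho_1)$, $\chi_2=(\rho_1+\rho_2)(1-\rho_1-\rho_2)$. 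Collecting powers, $\tfrac14\sum_i(\Delta\psi)_i\,\Cov$ carries the prefactor $t^{-4/3}\cdot t^{2/3}=t^{-2/3}$, and the remaining sum over the window of width $\sim t^{2/3}$ is a Riemann sum converging to $\const\cdot\int\phi''(w)\,\lim_t\Cov(\mathfrak{h}^{\rho_1}(w_1,t),\mathfrak{h}^{\rho_1+\rho_2}(w_2,t))\,dw$.

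The crucial input is then Theorem~\ref{thm_asymptotic_decoupling_height_functions_stationary}: for every fixed pair of parameters the rescaled height functions converge jointly to \emph{independent} Baik--Rains variables, so their limiting covariance vanishes. I expect this covariance-vanishing step---upgrading the convergence in distribution of Theorem~\ref{thm_asymptotic_decoupling_height_functions_stationary} to convergence of the mixed second moments---to be the main obstacle. It requires uniform integrability, i.e.\ uniform-in-$t$ bounds on $\E[\mathfrak{h}^{\rho}(w,t)^{2+\delta}]$ over the relevant compact range of $w$, together with the Cauchy--Schwarz control $|\Cov|\le(\mathrm{Var}\,\mathfrak{h}^{\rho_1}\cdot\mathrm{Var}\,\mathfrak{h}^{\rho_1+\rho_2})^{1/2}=O(1)$; here it is essential that both points lie on their own characteristics, where the variances are $O(t^{2/3})$, since an off-characteristic choice would inflate a variance to order $t$ and break the power counting. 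Such uniform tail and moment estimates for the stationary single-species height function are available from the inputs underlying~\cite{FS05a,BFP12,BF22}, and assembling them is the technical heart of the argument. This delivers $J_{1,2}[\phi]+J_{2,1}[\phi]\to 0$ for every smooth compactly supported $\phi$.

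Finally, to isolate $S^\#_{2,1}$ from this sum, I would invoke one of the specific properties of $\mu^{\rho_1,\rho_2}$: a reflection/time-reversal symmetry of the stationary two-species TASEP that maps $S^\#_{2,1}$ around the slow characteristic onto $S^\#_{1,2}$ around the fast one, thereby identifying the two functionals, $\lim_t J_{2,1}[\phi]=\lim_t J_{1,2}[\phi]$ up to the harmless reflection $w\mapsto-w$ of the test function. Combined with $J_{1,2}[\phi]+J_{2,1}[\phi]\to 0$, this forces $\lim_t J_{2,1}[\phi]=0$, which is the assertion of the theorem, and the same argument yields the companion decay of $S^\#_{1,2}$ stated in Corollary~\ref{corollary_decay_corr}. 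I would verify the required symmetry directly from the queueing construction of the measure and the basic coupling of the two marginals introduced around~\eqref{eq_def_marginal_configurations}.
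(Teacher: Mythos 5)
Your first three steps (summing Proposition~\ref{prop_mixed_correlations_formula} against $\phi$, moving the discrete Laplacian onto the test function, and reducing to the vanishing of $\Cov(\mathfrak{h}^{\rho_1}(w_1,t),\mathfrak{h}^{\rho_1+\rho_2}(w_2,t))$) are a coherent reduction, though already heavier than what the paper does: upgrading the distributional decoupling of Theorem~\ref{thm_asymptotic_decoupling_height_functions_stationary} to convergence of mixed second moments requires uniform $(2+\delta)$-moment bounds for the stationary height functions, which the paper never needs. The fatal problem is your last step. By putting \emph{both} arguments of \eqref{eq_prop_mixed_correlations_formula} at time $t$ on their respective characteristics, you end up with two unknowns of the same size, $J_{1,2}[\phi](\text{fast})+J_{2,1}[\phi](\text{slow})\to 0$, and you must then split them. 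The symmetry you invoke to do this does not exist. The genuine symmetry of the model is the particle--hole duality (the one the paper uses to deduce Corollary~\ref{corollary_decay_corr} from Theorem~\ref{thm_decay_corr}), and it maps $S^\#_{1,2}$ around the fast characteristic of the system with densities $(\rho_1,\rho_2)$ to $S^\#_{2,1}$ around the slow characteristic of a \emph{different} system, with densities $(1-\rho_1-\rho_2,\rho_2)$. If you run your argument for both systems and use duality, you obtain a single linear relation between the two unknown limits (the two equations it produces are identical), so you cannot conclude that either vanishes; and since correlations are signed ($C_{1,2}<0$), no positivity argument rescues the sum. A within-system ``reflection/time-reversal'' identification of $S^\#_{2,1}$ with $S^\#_{1,2}$ is false for generic densities: the adjoint of the two-species generator with respect to $\mu^{\rho_1,\rho_2}$ is \emph{not} the space-reflected two-species TASEP, because $\mu^{\rho_1,\rho_2}$ is not a product measure; the reversed dynamics has configuration-dependent rates (this can be checked explicitly on a three-site ring with one particle of each class, where the stationary weights are $1/9$ and $2/9$ and the adjoint rates come out as $2$ and $1/2$ rather than $1$). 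So the step you propose to ``verify directly from the queueing construction'' cannot be verified.

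The paper's proof avoids this trap by a different choice of the free parameters in Proposition~\ref{prop_mixed_correlations_formula}: it takes the companion term at time $\tilde t=0$ and at the spatial location $2Lt^{2/3}+wt^{2/3}$, outside the window of $\phi$. Then the companion $S^\#_{1,2}(2Lt^{2/3}+wt^{2/3},0)$ is an \emph{equal-time} correlation at distance of order $t^{2/3}$ and is exponentially small by the static factorisation of $\mu^{\rho_1,\rho_2}$ across second class particles (Lemmas~\ref{lemma_stationary_measure_independence_second_class_p} and~\ref{lemma_second_class_particle_in_interval_with_high_prob}); and the remaining covariance involves $h^{\rho_1}(\cdot,0)$ at time zero, where the queueing construction gives the exact decomposition $\mathfrak{h}^{\rho_1}=\hat{\mathfrak{h}}^{\rho_1}+\delta_{\mathfrak{h}}$ with $\hat{\mathfrak{h}}^{\rho_1}$ independent of $\mathfrak{h}^{\rho_1+\rho_2}$ and of the dynamics (Lemma~\ref{lemma_formula_for_IC_process_M}), so that Cauchy--Schwarz against the $O(t^{-1/2})$ second moment of $\delta_{\mathfrak{h}}$ kills the covariance. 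In other words, the key idea you are missing is that one of the two terms in \eqref{eq_prop_mixed_correlations_formula} can be made manifestly negligible by exploiting the time-zero structure of the stationary measure; keeping both terms dynamical, as you do, produces a sum that no available symmetry can disentangle. If you want to salvage your route, you would need an independent proof that $J_{1,2}[\phi](\text{fast})\to 0$, which is exactly the statement of Corollary~\ref{corollary_decay_corr} and hence circular.
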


Since Theorem~\ref{thm_decay_corr} holds for any choice of $\rho_1 \in (0,1)$ and $\rho_2 \in (0,1-\rho_1)$, the particle-hole duality yields the corresponding statement for $S^\#_{1,2}$:

\begin{cor} \label{corollary_decay_corr}
Let $\phi:\R \to \R$ be a smooth function with compact support. Then, it holds
\begin{equation}
\lim_{t \to \infty} t^{-2/3} \sum_{w \in t^{-2/3} \Z} \phi(w) t^{2/3} S^\#_{1,2}((1-2\rho_1)t+w t^{2/3},t) = 0.
\end{equation}
\end{cor}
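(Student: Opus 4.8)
The plan is to deduce the corollary from Theorem~\ref{thm_decay_corr} by exploiting a particle-hole duality that exchanges the roles of the two marginal processes. Concretely, I would introduce the involution $\Phi$ on two-species configurations defined by reflecting space and interchanging first class particles with holes while leaving second class particles fixed: in the notation $\eta:\Z\to\{1,2,+\infty\}$, set $(\Phi\eta)(i)=\bar\eta(-i)$ with $\bar 1=+\infty$, $\overline{+\infty}=1$, and $\bar 2=2$. Since a successful jump to the right in the two-species TASEP is exactly a swap of a higher-priority occupant with a strictly-lower-priority right neighbour (priority order first class $>$ second class $>$ hole), reversing the priority order turns right-swaps into left-swaps, and the additional space reflection turns these back into right-swaps. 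Hence $\Phi$ conjugates the generator to itself, i.e.\ $\Phi$ is a symmetry of the dynamics, and it is clearly an involution.

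First I would record the action on densities and on the stationary measure. Under $\Phi$ the densities become $\hat\rho_1=1-\rho_1-\rho_2$ and $\hat\rho_2=\rho_2$ (the new first class particles are the old holes), which again satisfy $\hat\rho_1\in(0,1)$ and $\hat\rho_2\in(0,1-\hat\rho_1)$. Because $\Phi$ preserves the dynamics and maps translation-invariant measures to translation-invariant measures, uniqueness of $\mu^{\rho_1,\rho_2}$ gives $\Phi_*\mu^{\rho_1,\rho_2}=\mu^{\hat\rho_1,\hat\rho_2}$; moreover, since $\Phi$ intertwines the Markov dynamics, the whole space-time law of $(\eta_s)_{s\ge0}$ under $\mu^{\rho_1,\rho_2}$ is carried to that under $\mu^{\hat\rho_1,\hat\rho_2}$, with the space coordinate reflected.

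Next I would translate $\Phi$ into the marginal fields. Directly from the definition, writing hats for the dual system with densities $(\hat\rho_1,\hat\rho_2)$, one has $\hat\eta_t^{\hat\rho_1+\hat\rho_2}(j)=1-\eta_t^{\rho_1}(-j)$ and $\hat\eta_0^{\hat\rho_1}(0)=1-\eta_0^{\rho_1+\rho_2}(0)$. Substituting these into the definition of $\hat S^\#_{2,1}$ and expanding the product, the linear terms and the product of averages combine so that all constants cancel, leaving the clean identity $\hat S^\#_{2,1}(j,t)=S^\#_{1,2}(-j,t)$. It then remains to apply Theorem~\ref{thm_decay_corr} to the dual system: its relevant speed is $1-2(\hat\rho_1+\hat\rho_2)=1-2(1-\rho_1)=-(1-2\rho_1)$, so the theorem gives the vanishing of $t^{-2/3}\sum_{w}\phi(w)\,t^{2/3}\hat S^\#_{2,1}(-(1-2\rho_1)t+wt^{2/3},t)$. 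Using $\hat S^\#_{2,1}(\,\cdot\,,t)=S^\#_{1,2}(-\,\cdot\,,t)$ together with the change of variables $w\mapsto-w$ (equivalently replacing $\phi$ by $\phi(-\,\cdot\,)$, which is again smooth with compact support) turns this into precisely the asserted decay of $S^\#_{1,2}$ around speed $1-2\rho_1$.

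The only genuinely delicate point is justifying that $\Phi$ transports the full space-time law, and not merely the one-time stationary marginal: one has to argue that $\Phi$ conjugates the generator (respecting the basic coupling of the two induced single-species marginals) and then invoke uniqueness of the translation-invariant stationary measure for each admissible density pair, so that $\Phi_*\mu^{\rho_1,\rho_2}=\mu^{\hat\rho_1,\hat\rho_2}$ holds as laws on trajectories. Once this symmetry is in place, the remainder is the bookkeeping of constants giving $\hat S^\#_{2,1}=S^\#_{1,2}(-\,\cdot\,)$ and of the transformed speed sketched above, both of which are routine.
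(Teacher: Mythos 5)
Your proposal is correct and follows essentially the same route as the paper: both exploit particle-hole duality combined with spatial reflection to identify $S^\#_{1,2}$ at densities $(\rho_1,\rho_2)$ with $S^\#_{2,1}$ at the complementary densities $(1-\rho_1-\rho_2,\rho_2)$ and reflected position, and then invoke Theorem~\ref{thm_decay_corr}, which holds for arbitrary admissible densities. The only difference is in how the key measure identity is justified: the paper cites the reflection property of the stationary measure (Theorem~4.1 of~\cite{BSS23}), whereas you derive $\Phi_*\mu^{\rho_1,\rho_2}=\mu^{1-\rho_1-\rho_2,\rho_2}$ from conjugation of the generator together with uniqueness of the translation-invariant stationary measure, which is a valid and slightly more self-contained argument.
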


Theorem~\ref{thm_decay_corr} and Corollary~\ref{corollary_decay_corr} are proven in Section~\ref{section_decay_correlations}.

Let us summarise the above statements for the $2\times 2$ matrix. For a given $v$, define
\begin{equation}
{\cal S}_v^\#(\phi):=\lim_{t \to \infty} t^{-2/3} \sum_{w \in t^{-2/3} \Z} \phi(w) t^{2/3} S^\#( vt+w  t^{2/3},t)
\end{equation}
with $\phi:\R \to \R$ being a smooth function with compact support. It holds: \\
(a) if $v=1-2\rho_1$, then
\begin{equation}
{\cal S}_v^\#(\phi)=\left(
\begin{array}{cc}
\chi_1 \int_\R \phi(w) \lambda_1^{-2/3}f_{\rm KPZ}( \lambda_1^{-2/3}w)dw & 0 \\
0 & 0 \\
\end{array}
\right)
\end{equation}
with $\chi_1=\rho_1(1-\rho_1)$ and $\lambda_1=2\sqrt{2\chi_1}$,\\
(b) if $v=1-2(\rho_1+\rho_2)$, then
\begin{equation}
{\cal S}_v^\#(\phi)=\left(
\begin{array}{cc}
0 & 0 \\
0 & \chi_2 \int_\R \phi(w) \lambda_2^{-2/3}f_{\rm KPZ}( \lambda_2^{-2/3}w)dw \\
\end{array}
\right)
\end{equation}
with $\chi_2=(\rho_1+\rho_2)(1-\rho_1-\rho_2)$ and $\lambda_2=2\sqrt{2\chi_2}$,\\
(c) for all other values of $v$,
\begin{equation}
{\cal S}_v^\#(\phi)=\left(
\begin{array}{cc}
0 & 0 \\
0 & 0 \\
\end{array}
\right).
\end{equation}

\paragraph{Related work on stationary exclusion processes.}
Initially, the translation-invariant stationary measure $\mu^{\rho_1,\rho_2}$ of the two-species TASEP was studied due to its significance in understanding the microscopic structure of shocks in marginal single-species processes. Its existence was established using standard coupling techniques~\cite{Lig76}, and its uniqueness and extremality were demonstrated in~\cite{FKS91,Spe94}. An explicit construction of the measure via the matrix product ansatz was provided in~\cite{DJLS93}; see also~\cite{Spe94}. Probabilistic interpretations derived from this solution were discussed in~\cite{FFK94}.
Subsequently,~\cite{Ang06} developed a combinatorial construction of the measure, resulting in a queueing representation detailed in~\cite{FM07}. Our work on $\mu^{\rho_1,\rho_2}$ builds on this construction, as described in Section~\ref{section_queueing_representation}.
In~\cite{FM07}, it was extended to the $n$-species TASEP, with additional context provided in~\cite{FM06}. Complementing the queueing representation, the matrix product solution from~\cite{DJLS93} was generalised to the $n$-species TASEP in~\cite{EFM09}.

In a broader framework,~\cite{AAV11} introduced the TASEP speed process, which projects onto the translation-invariant stationary measures of each multi-species TASEP. In~\cite{BSS23}, its convergence to the stationary horizon, originating from~\cite{Bus24}, was established, using the queueing construction from~\cite{FM07}.

A parallel theory has been developed for the ASEP. The stationary distributions of the multi-species ASEP were constructed in~\cite{Mar20} using queues, while the matrix product ansatz was applied in~\cite{EMP09}. Already~\cite{AAV11} conjectured the existence of an ASEP speed process, and demonstrated some of its properties. Its existence was later confirmed in~\cite{ACG23}.

In~\cite{ACH24}, the convergence of height functions of the multi-species ASEP and the coloured stochastic six-vertex model to the Airy sheet was established. Among their corollaries are the convergence to the stationary horizon for the stationary multi-species ASEP and decoupling results comparable to our work, see Remark~\ref{remark_comparison_ACH24}.

The queueing representation as well as the concept of speed processes have been subject to several generalisations for studying stationary measures in multi-class models beyond (T)ASEP, see for example~\cite{ABGM21,ANP25,BSS24} and the references therein.

\begin{remark} \label{remark_comparison_ACH24}
Our work has relations with results in~\cite{ACH24}, but with some essential differences. Indeed, the asymptotic decoupling of height profiles and decay of correlations were recently addressed for the two-species ASEP in Section~2.2.9 of~\cite{ACH24}. The authors consider asymptotics under a double scaling limit and therefore use properties of the directed landscape and the stationary horizon to obtain their results. In contrast, our work directly takes the large-time limit and provides the corresponding properties of the TASEP height functions and the stationary measure $\mu^{\rho_1,\rho_2}$ in the pre-limit. Accordingly, we analyse the translation-invariant stationary measure $\mu^{\rho_1,\rho_2,q}$ for ASEP (see Section~\ref{section_main_results_ASEP}) and obtain the decay of mixed correlations. For the decoupling of height functions, we restrict ourselves to the totally asymmetric case, as our tools are not yet fully available for the ASEP. Our results are proven for arbitrary densities, while~\cite{ACH24} focuses on densities in a \mbox{$t^{-1/3}$-neighbourhood} of $\tfrac{1}{2}$ (as they use inputs from other papers which were worked out for density $\tfrac{1}{2}$ only).

Corollary~2.15 of~\cite{ACH24} states that the two rescaled height functions are close to independent processes with probability converging to $1$, uniformly in space and for finitely many times. Due to the double limit, this result essentially concerns KPZ fixed points coupled via the directed landscape. In contrast, we consider the TASEP height functions at fixed points and state the product limit law explicitly. Still, our proofs of Theorem~\ref{thm_asymptotic_decoupling_height_functions_stationary}, Theorem~\ref{thm_asymptotic_decoupling_height_functions_flat} and Theorem~\ref{thm_asymptotic_decoupling_height_functions_general} extend to the spatial processes within $\mathcal{O}(t^{2/3})$-neighbourhoods of the characteristic lines. Corollary~2.15 of~\cite{ACH24} imposes a slope condition on the diffusive scaling limit of the initial height profiles, whereas we consider homogeneous initial data with suitable tail bounds. In both cases, a key requirement is that the two functions are nearly independent. To illustrate how this assumption can be satisfied, we construct a family of initial data with sufficiently weak, but non-zero, correlations in Section~\ref{section_decoupling_random_IC}.

Our Theorem~\ref{thm_decay_corr}, Corollary~\ref{corollary_decay_corr} and Corollary~\ref{cor_decay_corr_ASEP} correspond to Corollary~2.16 of~\cite{ACH24}, where the convergence of correlations is stated for the stationary two-species ASEP under the double scaling limit. In their proof, the authors perform a summation by parts twice by taking a further average first, which leads to boundary terms that need to be controlled in the scaling limit. Using the new formula of Proposition~\ref{prop_mixed_correlations_formula}, we do not require the averaging argument in our case, which is the first simplification. Furthermore, as explained in more detail in Section~\ref{section_decay_correlations}, Corollary~\ref{corollary_decay_corr} can be proven by similar, but not identical, arguments as Theorem~\ref{thm_decay_corr} (requiring additional estimates). This approach is followed by~\cite{ACH24} in their setting. We provide a simpler proof, observing that Corollary~\ref{corollary_decay_corr} directly follows from Theorem~\ref{thm_decay_corr} through the particle-hole duality and reflection properties of the stationary measure. This is particularly beneficial for the results in Corollary~\ref{cor_decay_corr_ASEP} on ASEP, as here the approach mentioned above is not applicable when considering only the large-time limit.
\end{remark}

\subsubsection{Asymptotic decoupling in the two-species TASEP with homogeneous initial data}  \label{section_main_results_deterministic}

The decoupling of height profiles, as established in Theorem~\ref{thm_asymptotic_decoupling_height_functions_stationary}, holds not only for stationary initial conditions but also for a broader class of translation-invariant initial data, which may be deterministic or random.

\paragraph{Deterministic initial data.} We consider a two-species TASEP with the following deterministic initial condition: for $\rho_1 \in (0,1)$ and $\rho_2 \in (0,1-\rho_1)$, we first place particles on $\Z$ such that the corresponding initial height profile fulfils
\begin{equation}
h^{\rho_1+\rho_2}(j,0) = (1-2(\rho_1+\rho_2)) j + H(j)
\end{equation}
with $\| H \|_\infty < \infty$. Among these particles, we select first class particles such that their initial height profile fulfils
\begin{equation}
h^{\rho_1}(j,0) = (1-2\rho_1)j + \tilde{H}(j)
\end{equation}
with $\| \tilde{H} \|_\infty < \infty$. The remaining particles are destined to be second class. The height functions are related to the marginal particle configurations as in \eqref{eq_def_height_fct}.

The asymptotic behaviour of the respective height profiles is known by Corollary~2.8 of~\cite{FO17}:
	let $\rho \in \{\rho_1,\rho_1+\rho_2\}$, $\chi = \rho(1-\rho)$ and
	\begin{equation}
	\mathfrak{h}^\rho(0,t) = \frac{h^\rho((1-2\rho)t,t) - (1-2\chi)t}{-2\chi^{2/3} t^{1/3}}.
	\end{equation}
	Then, for any $s \in \R$, it holds
	\begin{equation}
	\lim_{t \to \infty} \Pb(\mathfrak{h}^\rho(0,t) \leq s) = F_{\textup{GOE}}(2^{2/3} s),
	\end{equation}
	where $F_{\textup{GOE}}$ denotes the GOE Tracy-Widom distribution function.
	
As marginals of the two-species TASEP, we again obtain an asymptotic decoupling in the large-time limit.

\begin{thm} \label{thm_asymptotic_decoupling_height_functions_flat}
	The height fluctuations of $\mathfrak{h}^{\rho_1}(0,t)$ and $\mathfrak{h}^{\rho_1+\rho_2}(0,t)$ are asymptotically independent: for any $r,s \in \R$, it holds
	\begin{equation}
	\begin{aligned}
	 \lim_{t \to \infty} \Pb (\mathfrak{h}^{\rho_1}(0,t) \leq s, \mathfrak{h}^{\rho_1 + \rho_2}(0,t) \leq r)
	& = \lim_{t \to \infty} \Pb (\mathfrak{h}^{\rho_1}(0,t) \leq s) \Pb (\mathfrak{h}^{\rho_1 + \rho_2}(0,t) \leq r) \\
	& = F_{\textup{GOE}}(2^{2/3} s) F_{\textup{GOE}}(2^{2/3} r).
	\end{aligned}
	\end{equation}
\end{thm}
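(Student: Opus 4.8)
The plan is to follow the same strategy as in the proof of Theorem~\ref{thm_asymptotic_decoupling_height_functions_stationary}, exploiting that the two marginal processes $\eta^{\rho_1}_t$ and $\eta^{\rho_1+\rho_2}_t$ are single-species TASEPs evolving under basic coupling, here started from the \emph{deterministic} profiles with $\|H\|_\infty,\|\tilde H\|_\infty<\infty$. Since the initial data carry no randomness, the only source of correlation between $\mathfrak h^{\rho_1}(0,t)$ and $\mathfrak h^{\rho_1+\rho_2}(0,t)$ is the shared Poisson clock field. Consequently the local-independence ingredient used in the stationary case (Lemma~\ref{lemma_stationary_measure_independence_second_class_p}) is automatic, and the whole argument reduces to the localisation of backwards paths.

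First I would set up the backwards-path representation: for $\rho\in\{\rho_1,\rho_1+\rho_2\}$ the value $h^\rho((1-2\rho)t,t)$ is a functional of the initial data and of the clocks in a neighbourhood of the backwards path $\pi_\rho$ emanating from the space-time point $((1-2\rho)t,t)$. To apply Proposition~\ref{prop_localisation_backwards_geodesics_general} I must verify its hypothesis of controlled endpoints, namely that $\pi_\rho$ reaches time $0$ within an $\mathcal{O}(t^{2/3})$ window of the origin with probability tending to $1$. For the flat profile this endpoint localisation should follow, as the analogue of Corollary~\ref{cor_localisation_backwards_geodesics_stationary_TASEP}, from comparison with the step initial condition (Proposition~4.9 of~\cite{BF22}) together with transversal-fluctuation bounds for the bounded perturbation of the linear profile. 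Granting this, Proposition~\ref{prop_localisation_backwards_geodesics_general} confines $\pi_\rho$ to an $\mathcal{O}(t^{2/3})$-tube $T_\rho$ around the characteristic line $\{x=(1-2\rho)s\}$.

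The decoupling then rests on the geometry of the two tubes. Since $\rho_2>0$, the characteristic speeds $1-2\rho_1$ and $1-2(\rho_1+\rho_2)$ differ by $2\rho_2$, so the lines separate linearly: at time $s$ their centres are $2\rho_2 s$ apart, which exceeds the $\mathcal{O}(t^{2/3})$ tube width for all $s\gg t^{2/3}$. Hence $T_{\rho_1}$ and $T_{\rho_1+\rho_2}$ are disjoint except in a region of temporal extent $\mathcal{O}(t^{2/3})$ near the space-time origin, and on the complement the two height functionals depend on disjoint, hence independent, portions of the clock field. The remaining step is to show that the shared clocks in the small near-origin region affect each rescaled height function only by $o(1)$; this is where I expect the main difficulty, and I would treat it as in the stationary proof, either by bounding the fluctuation contribution of an $\mathcal{O}(t^{2/3})$-time window by $\mathcal{O}(t^{2/9})=o(t^{1/3})$, or by a resampling coupling of the overlap clocks that leaves both endpoint values unchanged up to $o(t^{1/3})$ with high probability.

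Finally, the decoupling yields asymptotic independence: one constructs versions $\tilde{\mathfrak h}^{\rho_1}$ and $\tilde{\mathfrak h}^{\rho_1+\rho_2}$ measurable with respect to disjoint clock fields, agreeing with the true rescaled heights up to $o(1)$ in probability, so that $\Pb(\mathfrak h^{\rho_1}(0,t)\le s,\ \mathfrak h^{\rho_1+\rho_2}(0,t)\le r)$ and the product of the two marginals differ by $o(1)$. Combining this with the one-point GOE limits of Corollary~2.8 of~\cite{FO17} gives the claimed factorisation into $F_{\textup{GOE}}(2^{2/3}s)F_{\textup{GOE}}(2^{2/3}r)$. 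The principal obstacle is thus the endpoint control for the non-stationary flat profile together with the control of the near-origin overlap; both are the flat-data counterparts of inputs already established for the stationary case.
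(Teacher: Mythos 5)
Your proposal is correct and follows essentially the same route as the paper: localisation of backwards paths around the two linearly separating characteristics (the endpoint control you grant is exactly Proposition~\ref{prop_geodesic_end_point_deterministic_IC_half_periodic}, proven in Appendix~\ref{appendix_A}, and fed into Proposition~\ref{prop_localisation_backwards_geodesics_general}), disjointness of the resulting tubes away from an $\mathcal{O}(t^{2/3+\e})$ neighbourhood of the origin, and an $o(t^{1/3})$ control of the shared near-origin randomness. Your option (a) for that last step is precisely what the paper does: it cuts at an intermediate time $t^\nu$ with $\nu\in(\tfrac{2}{3}+\e,1)$ and uses one-point estimates (via the conservation law and translation invariance) to replace the random time-$t^\nu$ profile differences by the deterministic initial profile, with error $\mathcal{O}(t^{\nu/3})=o(t^{1/3})$, before concluding via the continuity of $F_{\textup{GOE}}$.
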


Theorem~\ref{thm_asymptotic_decoupling_height_functions_flat} is proven in Section~\ref{section_asymptotic_decoupling_periodic} using similar methods as those in the proof of Theorem~\ref{thm_asymptotic_decoupling_height_functions_stationary}. Therefore, in Corollary~\ref{cor_localisation_geodesics_deterministic_IC_half_periodic}, we provide the localisation of backwards paths in a TASEP with periodic initial condition.
Without further argument, it also applies to deterministic initial conditions with different constant densities on the left and right of the origin.

In this deterministic case, we do not require the specific properties of the initial condition that were needed for $\mu^{\rho_1,\rho_2}$. Instead, we apply translation invariance and tail estimates for the marginal processes.

In Theorem~\ref{thm_asymptotic_decoupling_height_functions_flat}, we could also allow shifts of order $\mathcal{O}(t^{2/3})$ as in Theorem~\ref{thm_asymptotic_decoupling_height_functions_stationary}, but they would not alter the limit distribution.

\paragraph{Random initial data.}
The proof of the decay of the two-point function in Theorem~\ref{thm_decay_corr} relies on a bound for correlations in the stationary configuration $\eta \sim \mu^{\rho_1,\rho_2}$, as stated in Lemma~\ref{lemma_formula_for_IC_process_M}. Given this bound, the decoupling of height profiles in the stationary case can be established using the same strategy as in the proof of Theorem~\ref{thm_asymptotic_decoupling_height_functions_flat}. In particular, the specific properties of $\mu^{\rho_1,\rho_2}$ used in the proof of Theorem~\ref{thm_asymptotic_decoupling_height_functions_stationary} are not necessary for the decoupling. Instead, there exist sufficient criteria that can also be met by other random initial conditions.

As before, we let $\rho_1 \in (0,1)$ and $\rho_2 \in (0,1-\rho_1)$, and consider a two-species TASEP with a random initial configuration $\eta$. For $\rho \in \{\rho_1,\rho_1+\rho_2\}$, we define $\eta^\rho$ and $h^\rho$ as in \eqref{eq_def_marginal_configurations} and \eqref{eq_def_height_fct}. Again, the rescaled height profiles are given by
\begin{equation}
\mathfrak{h}^\rho(w,t) = \frac{h^\rho((1-2\rho)t+2w \chi^{1/3} t^{2/3},t) - (1-2\chi)t - 2w(1-2\rho)\chi^{1/3}t^{2/3}}{-2\chi^{2/3}t^{1/3}}
\end{equation}
for $w \in \R$ and $\chi = \rho(1-\rho)$.

We impose the following assumption on the initial condition:

\begin{assumpt} \label{assumption_decoupling_general_IC}
We suppose that the following properties are satisfied:
	\begin{itemize}
		\item[(a)] Spatial homogeneity: the distribution of $\eta$ is translation-invariant.
		\item[(b)] Tail bounds: for $\rho \in \{\rho_1,\rho_1+\rho_2\}$ and uniformly for $t$ large enough, there exist constants $C,c>0$ such that
		\begin{equation}
		\Pb(|h^{\rho}(xt^{2/3},0)-(1-2\rho)xt^{2/3}| > s t^{1/3}) \leq C e^{-c s |x|^{-1/2}}
		\end{equation}
		for all $x \neq 0$ and $s > 0$.
		\item[(c)] Small initial correlations: it holds $h^{\rho_1}(y,0) = \hat{h}^{\rho_1}(y,0) + \delta_h(y)$ such that $\hat{h}^{\rho_1}(\cdot,0)$ and $h^{\rho_1+\rho_2}(\cdot,0)$ are independent, and
		\begin{equation}
		\lim_{t \to \infty} \Pb\bigg(\sup_{|y| \leq t^{2/3+\e}}|\delta_h(y)| > t^\sigma \bigg) = 0
		\end{equation}
		for some $\e,\sigma \in (0,\tfrac{1}{3})$.
	\end{itemize}
\end{assumpt}

As for deterministic initial data, it suffices to have Assumption~\ref{assumption_decoupling_general_IC}(a) up to uniformly bounded perturbations. We require (b) for an approximate localisation of backwards paths and for rough tail bounds for $h^{\rho}((1-2\rho)t,t)$. The bound on the correlated part of the initial height profiles in (c) is crucial to observe a decoupling at large times, see also Remark~E.4 of~\cite{ACH24} for a comparable condition in the double-limit setting.

With these ingredients, Theorem~\ref{thm_asymptotic_decoupling_height_functions_flat} generalises to random initial data.

\begin{thm} \label{thm_asymptotic_decoupling_height_functions_general}
	Suppose the initial condition of the two-species TASEP satisfies Assumption~\ref{assumption_decoupling_general_IC} and $\mathfrak{h}^{\rho_1}(w,t)$ and $\mathfrak{h}^{\rho_1+\rho_2}(w,t)$ have continuous limit distributions. Then, they are asymptotically independent: for any $w,z,r,s \in \R$, it holds
	\begin{equation}
	\lim_{t \to \infty} \Pb(\mathfrak{h}^{\rho_1}(w,t) \leq s, \mathfrak{h}^{\rho_1+\rho_2}(z,t) \leq r ) = \lim_{t \to \infty} \Pb(\mathfrak{h}^{\rho_1}(w,t) \leq s) \Pb (\mathfrak{h}^{\rho_1+\rho_2}(z,t) \leq r ).
	\end{equation}
\end{thm}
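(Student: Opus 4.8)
The plan is to follow the strategy behind the proof of Theorem~\ref{thm_asymptotic_decoupling_height_functions_flat}, replacing the two structural inputs of the deterministic setting (exact translation invariance of the initial data and the deterministic tail control of the marginal processes) by Assumption~\ref{assumption_decoupling_general_IC}(a) and (b), and adding one genuinely new step to handle the randomness of the initial data, which is controlled by Assumption~\ref{assumption_decoupling_general_IC}(c). Throughout I use that both marginals $\eta_t^{\rho_1}$ and $\eta_t^{\rho_1+\rho_2}$ are single-species TASEPs driven by the same Poisson clocks, so that each rescaled height function is a monotone functional of its own initial height profile and of the clocks. It suffices to show that the joint distribution function of $(\mathfrak{h}^{\rho_1}(w,t),\mathfrak{h}^{\rho_1+\rho_2}(z,t))$ converges to the product of the two marginal limits; since these are assumed continuous, I would approximate each height function by a surrogate depending only on a localised part of the randomness and then prove that the two surrogates become independent.

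First I would localise. By Assumption~\ref{assumption_decoupling_general_IC}(b) the endpoints of the backwards paths are controlled, so Proposition~\ref{prop_localisation_backwards_geodesics_general} (in the form used for Corollary~\ref{cor_localisation_geodesics_deterministic_IC_half_periodic}) shows that, with probability tending to one, the value $h^{\rho}((1-2\rho)t+2w\chi^{1/3}t^{2/3},t)$ depends only on the initial data in a window of size $Mt^{2/3}$ around the origin and on the clocks in a space-time tube of width $Mt^{2/3}$ around the characteristic line $x=(1-2\rho)s$, $s\in[0,t]$, for $M$ fixed and large. Here Assumption~\ref{assumption_decoupling_general_IC}(a) makes the localisation uniform along the characteristic and, together with (b), supplies the a priori bounds needed to place the endpoints inside $\{|y|\le t^{2/3+\e}\}$. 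Since $\rho_2>0$, the slopes $1-2\rho_1$ and $1-2(\rho_1+\rho_2)$ differ, so the two characteristics separate linearly and their tubes are disjoint for times $s$ bounded away from $0$; hence the clocks driving the two height functions are independent up to a space-time neighbourhood of the origin of size $\mathcal{O}(t^{2/3})$, whose contribution vanishes after division by $t^{1/3}$. This clock decoupling is exactly the mechanism already present in Theorem~\ref{thm_asymptotic_decoupling_height_functions_flat}.

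The new step removes the correlation carried by the random initial data, which is where Assumption~\ref{assumption_decoupling_general_IC}(c) enters. Both backwards paths hit $\{t=0\}$ in overlapping windows of size $\mathcal{O}(t^{2/3})$ about the origin, so a priori the two height functions read the same portion of the initial configuration. Writing $h^{\rho_1}(\cdot,0)=\hat h^{\rho_1}(\cdot,0)+\delta_h$ and letting $\hat{\mathfrak{h}}^{\rho_1}$ be the profile evolved from $\hat h^{\rho_1}(\cdot,0)$ under the same clocks, I would use that the localisation lets one modify the initial data outside the window without changing the time-$t$ height, and that monotonicity of the TASEP height evolution gives a Lipschitz bound: perturbing the initial height by at most $\sup_{|y|\le t^{2/3+\e}}|\delta_h(y)|$ on the relevant window changes the time-$t$ height by at most the same amount. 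By Assumption~\ref{assumption_decoupling_general_IC}(c) this supremum is $o(t^{1/3})$ with probability tending to one, so $\mathfrak{h}^{\rho_1}(w,t)-\hat{\mathfrak{h}}^{\rho_1}(w,t)\to 0$ in probability. Now $\hat{\mathfrak{h}}^{\rho_1}(w,t)$ depends only on $\hat h^{\rho_1}(\cdot,0)$ and the clocks in its tube, while $\mathfrak{h}^{\rho_1+\rho_2}(z,t)$ depends only on $h^{\rho_1+\rho_2}(\cdot,0)$ and the clocks in the other tube; the two initial profiles are independent by (c) and the clocks are independent by the previous step, so the two surrogates are independent. Passing through the continuity of the marginal limits, via a sandwiching argument that converts the $o(1)$ and localisation errors into convergence of the joint distribution function, yields the claimed factorisation.

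The hard part will be the quantitative control in this last step: propagating the initial perturbation $\delta_h$ through the nonlinear dynamics without amplification, while simultaneously ruling out the rare events that a backwards path leaves its tube or that an endpoint escapes $\{|y|\le t^{2/3+\e}\}$. The Lipschitz bound on the height in terms of $\delta_h$ is only available once the maximiser in the variational representation is known to lie in the window where (c) provides control, which itself relies on the localisation from (b); making these dependencies compatible and uniform enough to survive the passage to the limiting distribution functions is the delicate point, whereas the separation of the clock tubes coming from the distinct normal-mode speeds is essentially inherited from the deterministic case.
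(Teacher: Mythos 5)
Your overall architecture coincides with the paper's: localise the backwards paths via Assumption~\ref{assumption_decoupling_general_IC}(b) and Proposition~\ref{prop_localisation_backwards_geodesics_general} (this is the paper's Lemma~\ref{lemma_general_IC_backwards_path}), reuse the clock-decoupling mechanism of Theorem~\ref{thm_asymptotic_decoupling_height_functions_flat}, use Assumption~\ref{assumption_decoupling_general_IC}(c) to pass to an initial profile independent of $h^{\rho_1+\rho_2}(\cdot,0)$, and conclude by sandwiching with continuity of the limit laws. However, your central new step has a genuine gap. You define the surrogate $\hat{\mathfrak{h}}^{\rho_1}$ as the \emph{full TASEP evolution} of $\hat{h}^{\rho_1}(\cdot,0)$ under the same clocks, and you control $|\mathfrak{h}^{\rho_1}-\hat{\mathfrak{h}}^{\rho_1}|$ by monotonicity together with the claim that ``the localisation lets one modify the initial data outside the window without changing the time-$t$ height''. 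That claim is false as stated: by \eqref{eq_concatenation_property_height_fct}, the evolution of $\hat{h}^{\rho_1}$ at the observation point is $\min_{y\in\Z}\{\hat{h}^{\rho_1}(y,0)+h^{\mathrm{step}}_{y,0}((1-2\rho_1)t,t)\}$, and the localisation event of the \emph{original} process constrains only the minimiser of $h^{\rho_1}(y,0)+h^{\mathrm{step}}_{y,0}((1-2\rho_1)t,t)$; the minimiser of the modified problem can escape the window if $\hat{h}^{\rho_1}$ has deep valleys outside it. Assumption~\ref{assumption_decoupling_general_IC}(c) controls $\delta_h$ only on $\{|y|\le t^{2/3+\e}\}$, and Assumption~\ref{assumption_decoupling_general_IC}(b) applies to $h^{\rho_1}$, not to $\hat{h}^{\rho_1}$, so nothing rules out such valleys. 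Concretely, your argument yields only the one-sided bound that the $\hat{h}$-evolution is at most $h^{\rho_1}((1-2\rho_1)t,t)+t^{\sigma}$; the reverse inequality needs the modified minimiser to stay in the window, which the assumptions do not guarantee. The paper evades this entirely by never evolving $\hat{h}^{\rho_1}$: its surrogate is the minimum \emph{restricted to the window} $I_{\rho_1}$ of $\hat{h}^{\rho_1}(x-(1-2\rho_1)t^{\nu},0)+h^{\mathrm{step}}_{x,t^{\nu}}((1-2\rho_1)t,t)$, so values of $\hat{h}^{\rho_1}$ outside the window never enter.

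Second, even granting that replacement, your two surrogates would still not be independent: both are evolved with the same Poisson clocks, and their backwards paths traverse the shared clocks in the overlap region near the origin (times up to order $t^{2/3+\e}$). Saying the overlap's ``contribution vanishes after division by $t^{1/3}$'' is a statement about size, not about measurability; for the product formula you need surrogates that are functions of \emph{disjoint} collections of random variables. This is exactly what the intermediate-time decomposition achieves and it cannot be skipped: cut at time $t^{\nu}$ with $\nu\in(\tfrac{2}{3}+\e,1)$, absorb all shared early randomness into the single term $h^{\rho}((1-2\rho)t^{\nu},t^{\nu})$, whose fluctuations are $\mathcal{O}(t^{\nu/3})=o(t^{1/3})$, and replace the window increments $h^{\rho}(x,t^{\nu})-h^{\rho}((1-2\rho)t^{\nu},t^{\nu})$ by the time-zero data $h^{\rho}(x-(1-2\rho)t^{\nu},0)$. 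This last replacement (the paper's Lemma~\ref{lemma_general_IC_translation}) is where Assumption~\ref{assumption_decoupling_general_IC}(a) actually enters---through the conservation law and the distributional identity $h^{\rho}((1-2\rho)t^{\nu}+y,t^{\nu})-h^{\rho}(y,0)\overset{(d)}{=}h^{\rho}((1-2\rho)t^{\nu},t^{\nu})$---not, as you suggest, for uniformity of the localisation. After these two steps the surrogates depend only on time-zero data in their windows (independent by Assumption~\ref{assumption_decoupling_general_IC}(c)) and on step-TASEP heights started at time $t^{\nu}$ whose backwards paths localise in disjoint tubes, and the factorisation then follows as in the proofs of Theorem~\ref{thm_asymptotic_decoupling_height_functions_stationary} and Theorem~\ref{thm_asymptotic_decoupling_height_functions_flat}.
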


We shortly prove Theorem~\ref{thm_asymptotic_decoupling_height_functions_general} in Section~\ref{section_decoupling_random_IC}, where we also give an explicit example of a class of initial conditions that satisfy Assumption~\ref{assumption_decoupling_general_IC}. This class is obtained through a generalisation of the queueing construction of $\mu^{\rho_1,\rho_2}$ explained in Section~\ref{section_queueing_representation}.

\subsection{Exact results for the stationary two-species ASEP} \label{section_main_results_ASEP}

While the primary focus of our work is on TASEP, the results concerning the two-point function in the stationary regime extend to the more general asymmetric simple exclusion process.

\paragraph{Model and notation.}
We consider a two-species, continuous-time ASEP on $\Z$, where the interaction rules between different particle types are the same as in the totally asymmetric case discussed previously. Particles attempt to jump to the right with rate $1$ and to the left with rate $q \in [0,1)$; thus, $q=0$ corresponds to total asymmetry.
For given densities $\rho_1 \in (0,1)$ and $\rho_2 \in (0,1-\rho_1)$ of first and second class particles and a fixed asymmetry parameter $q$, there exists a unique translation-invariant stationary measure $\mu^{\rho_1,\rho_2,q}$ for the process. Its projections onto the marginal single-species ASEPs, consisting of either the first class or all particles, are again Bernoulli product measures with densities $\rho_1$ and $\rho_1+\rho_2$, respectively. We refer to \cite{EMP09,FKS91,Lig76,Mar20} for the existence and properties of $\mu^{\rho_1,\rho_2,q}$.

We define the configurations $\eta_t$, $\eta_t^{\rho_1}$, $\eta_t^{\rho_1+\rho_2}$ and the height functions $h^\rho$ for $\rho \in \{\rho_1,\rho_1+\rho_2\}$ as in Section~\ref{section_main_results_stationary}. Now, the quantity $N_t^\rho$ is the \emph{net} particle current across $(0,1)$. That is, $N^\rho_t$ equals the number of particles that jumped from $0$ to $1$ minus the number of particles that jumped from $1$ to $0$ until time $t$.

For ASEP, the rescaled height function $\mathfrak{h}^\rho(w,t)$ is given by
\begin{equation} \label{eq_rescaled_height_fct_ASEP}
\frac{h^\rho((1-2\rho)t+2w \chi^{1/3} t^{2/3},(1-q)^{-1}t) - (1-2\chi)t - 2w(1-2\rho)\chi^{1/3}t^{2/3}}{-2\chi^{2/3}t^{1/3}}
\end{equation}
for $w \in \R$ and $\chi = \rho(1-\rho)$, and \cite{Agg18} established
\begin{equation}
\lim_{t \to \infty} \Pb(\mathfrak{h}^\rho(w,t) \leq s) = F_{\text{BR},w}(s).
\end{equation}
The two-point function of the process $\eta_t$ is defined as in \eqref{eq_correlation_matrix}. Corollary~2.6 of \cite{LS25} provides convergence of the diagonal terms to the KPZ-universal scaling limit: in the large-time limit and for $i \in \{1,2\}$ and $\rho = \rho(i) \in \{\rho_1,\rho_1+\rho_2\}$,
\begin{equation}
2\chi^{1/3}t^{2/3} S^\#_{i,i}((1-2\rho)t+2w\chi^{1/3}t^{2/3}, (1-q)^{-1} t)
\end{equation}
converges to $\chi f_{\text{KPZ}}(w)$ when integrated against smooth functions of $w$ with compact support.
For other speeds, the rescaled correlations converge to zero, as their connection to the distribution of a second class particle remains valid for ASEP. See also \cite{BS09} for an explicit proof for more general exclusion processes.

\paragraph{Main result.} Modifying the proofs of Theorem~\ref{thm_decay_corr} and Corollary~\ref{corollary_decay_corr}, we establish that the rescaled off-diagonal terms of the two-point function converge to zero also for ASEP.

\begin{cor} \label{cor_decay_corr_ASEP}
	Let $S^\#$ denote the two-point function of the stationary two-species ASEP with $\eta_t \sim \mu^{\rho_1,\rho_2,q}$ and let $\phi : \R \to \R$ be a smooth function with compact support. Then, it holds
	\begin{equation}
	\lim_{t \to \infty} t^{-2/3} \sum_{w \in t^{-2/3} \Z} \phi(w) t^{2/3} S^\#_{2,1}((1-2(\rho_1+\rho_2))t+wt^{2/3}, (1-q)^{-1}t) = 0
	\end{equation}
	and
	\begin{equation}
	\lim_{t \to \infty} t^{-2/3} \sum_{w \in t^{-2/3} \Z} \phi(w) t^{2/3} S^\#_{1,2}((1-2\rho_1)t+wt^{2/3}, (1-q)^{-1}t) = 0.
	\end{equation}
	The same is true for general speeds $v\neq 1-2(\rho_1+\rho_2)$ or $v \neq 1-2\rho_1$, respectively.
\end{cor}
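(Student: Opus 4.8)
The plan is to follow the proofs of Theorem~\ref{thm_decay_corr} and Corollary~\ref{corollary_decay_corr} line by line, replacing each totally asymmetric ingredient by its asymmetric counterpart, and then to obtain the $S^\#_{1,2}$ statement from the $S^\#_{2,1}$ statement by symmetry rather than by a second independent computation. The backbone is Proposition~\ref{prop_mixed_correlations_formula}, which is stated for ASEP as well: taking both times equal to the ASEP observation time $(1-q)^{-1}t$ fixed by the rescaling \eqref{eq_rescaled_height_fct_ASEP}, it writes $S^\#_{1,2}(x+i,\cdot)+S^\#_{2,1}(\tilde x+i,\cdot)$ as $\tfrac14\Delta_i\Cov(h^{\rho_1}(x+i,\cdot),h^{\rho_1+\rho_2}(\tilde x+i,\cdot))$. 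I would sum this identity against $\phi$ on the $t^{2/3}$-scale around the $v_2$-characteristic, transfer the discrete Laplacian onto $\phi$ by its self-adjointness (equivalently, summing by parts twice), and thereby reduce the claim to showing that the cross-covariance $\Cov(h^{\rho_1},h^{\rho_1+\rho_2})$, suitably rescaled and integrated against $\phi''$, tends to zero; the bookkeeping of exponents is identical to the totally asymmetric case.

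The control of this cross-covariance is where the ASEP-specific work lies, and crucially it does not go through the backwards-path localisation (unavailable for ASEP) but through the initial-data correlation bound. First I would establish the ASEP analogue of Lemma~\ref{lemma_formula_for_IC_process_M} for $\mu^{\rho_1,\rho_2,q}$: the queueing construction of the multi-species ASEP stationary measures in~\cite{Mar20} plays the role that~\cite{Ang06,FM07} plays for TASEP, and from it I would extract the decay of the joint correlations of the two marginal fields $\eta^{\rho_1}_0$ and $\eta^{\rho_1+\rho_2}_0$ in the stationary configuration. The marginal processes are stationary single-species ASEPs, so the fluctuations of $h^{\rho_1}$ and $h^{\rho_1+\rho_2}$ obey the Baik--Rains limit by~\cite{Agg18} and their diagonal correlations converge to $\chi f_{\rm KPZ}$ by Corollary~2.6 of~\cite{LS23}; these supply the variance asymptotics which, combined with the correlation bound and the separation of the two characteristics ($v_1\neq v_2$ since $\rho_2>0$), control the cross-covariance and yield the weak convergence to zero of $t^{2/3}S^\#_{2,1}((1-2(\rho_1+\rho_2))t+wt^{2/3},(1-q)^{-1}t)$.

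For the companion statement on $S^\#_{1,2}$ I would not rerun the argument but invoke the particle--hole duality together with the reflection symmetry of $\mu^{\rho_1,\rho_2,q}$, exactly as Corollary~\ref{corollary_decay_corr} is deduced from Theorem~\ref{thm_decay_corr}. The only genuinely new point for $q>0$ is that exchanging particles and holes reverses the asymmetry (interchanging the right rate $1$ and the left rate $q$); composing the duality with a spatial reflection restores the original dynamics while swapping the two densities, the two speeds $v_1\leftrightarrow v_2$, and hence the two off-diagonal entries, and it also flips the sign convention of the net current $N^\rho_t$ entering $h^\rho$. This is precisely why the duality route is preferable in the ASEP setting: the alternative scheme of~\cite{ACH24}, which integrates a further average and sums by parts twice, is tied to the double scaling limit and does not carry over to the pure large-time limit for $q>0$. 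The extension to general speeds $v\neq 1-2(\rho_1+\rho_2)$, respectively $v\neq 1-2\rho_1$, follows from the same covariance estimate, which is only smaller away from the matching characteristic.

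The step I expect to be the main obstacle is the ASEP correlation bound together with its use in estimating the cross-covariance. For TASEP this rests on clean combinatorial features of $\mu^{\rho_1,\rho_2}$ with an explicit queueing description; the measure $\mu^{\rho_1,\rho_2,q}$ is heavier to handle, and re-deriving the required decay of initial correlations---uniformly in the range of densities and in $t$ needed for the weak limit---is the delicate point. A secondary care-demanding point is verifying that the duality-plus-reflection map acts on the rescaled height observables with the correct signs, so that the $v_1$- and $v_2$-characteristics are genuinely interchanged and no spurious drift survives in the limit.
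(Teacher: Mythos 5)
Your overall architecture matches the paper's: Proposition~\ref{prop_mixed_correlations_formula} combined with two summations by parts, an ASEP analogue of Lemma~\ref{lemma_formula_for_IC_process_M} extracted from the queueing construction of~\cite{Mar20}, uniform second-moment bounds for the rescaled stationary single-species ASEP from~\cite{Agg18,LS23}, the particle--hole duality plus reflection of $\mu^{\rho_1,\rho_2,q}$ to get the $S^\#_{1,2}$ statement from the $S^\#_{2,1}$ one, and the general-speed extension via the argument of Remark~\ref{remark_general_speeds}. These are precisely the ingredients the paper uses (it realises the initial-correlation bound and the analogue of Lemma~\ref{lemma_formula_for_IC_process_M} through geometric ergodicity of the stationary queue length, which gives exponential tails for $Q_i$ and for its return time to $0$).

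However, there is a genuine flaw in your central reduction: you apply Proposition~\ref{prop_mixed_correlations_formula} \emph{taking both times equal to the observation time} $(1-q)^{-1}t$. With both times large the identity reads
\begin{equation*}
S^\#_{1,2}(x+i,T)+S^\#_{2,1}(\tilde x+i,T)=\tfrac14\,\Delta\Cov\bigl(h^{\rho_1}(x+i,T),h^{\rho_1+\rho_2}(\tilde x+i,T)\bigr),\qquad T=(1-q)^{-1}t,
\end{equation*}
and this cannot be closed. First, the unwanted term $S^\#_{1,2}(\cdot,T)$ is an off-diagonal correlation at the large time, i.e.\ exactly the type of quantity the corollary asserts to vanish, so discarding it is circular no matter where you place $x$. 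Second, your control of the covariance rests on the decomposition $h^{\rho_1}(\cdot,0)=\hat h^{\rho_1}(\cdot,0)+\delta_h$, in which $\hat h^{\rho_1}(\cdot,0)$ is a function of the arrival process $\mathcal{A}$ alone and hence independent of the total-particle field; this independence is a property of the \emph{initial} profile and does not survive evaluating $h^{\rho_1}$ at time $T$, when the two marginal fields are coupled through the shared Poisson clocks. Controlling $\Cov(h^{\rho_1}(\cdot,T),h^{\rho_1+\rho_2}(\cdot,T))$ at equal large times is essentially the decoupling statement that the paper explicitly states is out of reach for ASEP (no concatenation property, no backwards paths). The fix is the asymmetric choice of times actually made in the proof of Theorem~\ref{thm_decay_corr}: put the $S^\#_{2,1}$ slot at time $(1-q)^{-1}t$ on the $v_2$-characteristic, and the $S^\#_{1,2}$ slot at time $0$ and position $2Lt^{2/3}+wt^{2/3}$. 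Then the leftover term $S^\#_{1,2}(2Lt^{2/3}+wt^{2/3},0)$ is an equal-time correlation under $\mu^{\rho_1,\rho_2,q}$ at spatial distance at least $Lt^{2/3}$, which is killed by the exponential decay you do establish (conditioning on a site with zero queue length), and the covariance involves $h^{\rho_1}$ only at time $0$, where the queueing decomposition and the Cauchy--Schwarz bound apply. With that correction, the remainder of your argument coincides with the paper's proof.
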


Corollary~\ref{cor_decay_corr_ASEP} is proven in Section~\ref{section_decay_correlations_ASEP}, where we outline the modifications required for the general asymmetric case. The rest of our work focuses on TASEP, as this setting supports a more direct analytical approach. Moreover, with our methods, the decoupling of height functions in the large-time limit is not feasible for ASEP, due to the lack of the concatenation property \eqref{eq_concatenation_property_height_fct} and a corresponding theory of backwards paths.

\paragraph{Outline.} In Section~\ref{section_backwards_geodesics}, we recall the theory of backwards paths for single-species TASEP height functions and establish their localisation for the stationary and the (half-)periodic initial conditions. Section~\ref{section_stationary_measure} discusses the queueing construction and further properties of the stationary measure $\mu^{\rho_1,\rho_2}$. Section~\ref{section_asymptotic_decoupling} contains the proofs of the first two asymptotic decoupling results, Theorem~\ref{thm_asymptotic_decoupling_height_functions_stationary} and Theorem~\ref{thm_asymptotic_decoupling_height_functions_flat}. In Section~\ref{section_mixed_correlations}, we examine the mixed correlations in the stationary two-species TASEP, and then generalise the analysis to the stationary two-species ASEP. Finally, in Section~\ref{section_decoupling_random_IC}, we extend the proof of Theorem~\ref{thm_asymptotic_decoupling_height_functions_flat} to random initial data. The appendices include proofs of auxiliary results on backwards paths, one-point estimates, and the stationary measure.

\section{TASEP height function backwards paths} \label{section_backwards_geodesics}

This section discusses the definition and the localisation of backwards paths for single-species TASEP height functions.

First, we recall the notion of basic coupling and revisit some concepts from Section~3 of~\cite{FN24} and Section~4 of~\cite{BF22}.

As explored by Harris~\cite{Har72,Har78}, the evolution of TASEP can be constructed graphically by describing the jump attempts at each site $z \in \Z$ by a Poisson process $\mathcal{P}_z$ with rate one, where $\{\mathcal{P}_z, z \in \Z\}$ are independent of each other and of the initial condition of the process. Almost surely, there is at most one jump attempt at any given time. Two processes are coupled by \emph{basic coupling} if they are constructed using the same family of Poisson processes.

We fix a TASEP height function $h$ and denote by  $h^{\textup{step}}_{y,\tau}$ the height function of a TASEP starting at time $\tau \geq 0$ from a step initial condition centred at $y \in \Z$, meaning $h^{\textup{step}}_{y,\tau}(j,\tau) = |j-y|$. Under basic coupling, it holds
\begin{equation} \label{eq_concatenation_property_height_fct}
h(j,t) = \min_{y \in \Z} \{ h(y,\tau) + h^{\text{step}}_{y,\tau}(j,t) \}
\end{equation}
for each $\tau \in [0,t]$.

A backwards trajectory $\{x(\tau)\}_{\tau:t\downarrow 0 }$ with $x(t) = x$ is called a \emph{backwards geodesic} if
\begin{equation} \label{eq_geodesic_property}
h(x,t) = h(x(\tau),\tau)+h^{\text{step}}_{x(\tau),\tau}(x,t)
\end{equation}
for all $\tau \in [0,t]$. This notion is an analogue of the geodesics in LPP models. A valuable property of a backwards geodesic is that if we localise it in a deterministic space-time region, then $h(x,t)$ is independent of the randomness outside this region~\cite{BF22}.

We define \emph{backwards paths} as in Definition~3.5 of~\cite{FN24}. By Proposition~4.2 of~\cite{BF22}, they are, in particular, backwards geodesics.

We start at time $t$ from $x(t) = x$ and go backwards in time. For each $s \in [0,t]$ such that $x(s)=y$ and there is a jump attempt at site $y$ at time $s$, we update $x(s)$ as follows: if a jump occurred, meaning $h(x(s),s) = h(x(s),s^-)+2$, then we let $x(s^-) = x(s)$. If no jump occurs, then we choose $x(s^-) \in \{x(s)-1,x(s)+1\}$ such that $h(x(s^-),s) = h(x(s),s)-1$. By this means, backwards paths are not unique, see also Figure~\ref{figure_height_fct_backwards_paths}.

\begin{figure}[t!]
\centering
\includegraphics[scale=1]{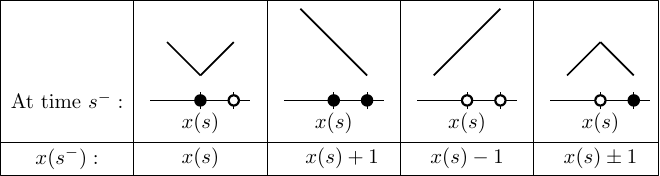}
\caption{Possible values of $x(s^-)$ depending on the height profile respectively the particle configuration at time $s^-$ around the position $x(s)$. The black dots represent particles while the white dots correspond to holes.}
\label{figure_height_fct_backwards_paths}
\end{figure}

A localisation of backwards paths can be achieved by a comparison to paths in a TASEP with step initial condition.

\subsection{Comparison of backwards paths} \label{section_comparison_backwards_paths}

We consider times $t_1,t_2 \in [0,t]$ with $t_1 < t_2$, and sites $x_1,x_2 \in \Z$. Further, we couple all processes by basic coupling. Then, the following comparisons hold true:

 \begin{prop} \label{prop_comparison_rightmost_backwards_geodesics_to_step}
	Suppose $x(t_1) \leq x_1$ and $ x(t_2) \leq x_2$. Let $x^{\textup{step},r}(\tau)$ be the rightmost backwards path with respect to $h^{\textup{step}}_{x_1,t_1}$ starting at time $t_2$ from position $x_2$.
	Then, it holds
	\begin{equation}
	x(\tau) \leq x^{\textup{step},r}(\tau) \text{ for all } \tau \in [t_1,t_2].
	\end{equation}
\end{prop}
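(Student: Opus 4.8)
The plan is to couple all the height functions ($h$, the step function $g:=h^{\text{step}}_{x_1,t_1}$, and the step building blocks $h^{\text{step}}_{\cdot,\cdot}$) by basic coupling and to argue by a backwards induction over the Poisson event times in $[t_1,t_2]$, carrying the invariant $x(\tau)\le x^{\textup{step},r}(\tau)$. The base case at $\tau=t_2$ is exactly the hypothesis $x(t_2)\le x_2=x^{\textup{step},r}(t_2)$. For the inductive step I would use that, almost surely, at most one site has a jump attempt at any given time and that a single update moves each backwards path by at most one unit. Consequently, whenever the two paths are strictly ordered, $x(s)<x^{\textup{step},r}(s)$, the ordering can degrade to an equality but cannot be reversed by a single event (an event at $x(s)$ moves only $x$, an event at $x^{\textup{step},r}(s)$ moves only the step path). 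This reduces the whole argument to the single \emph{coincidence case}: $x(s)=x^{\textup{step},r}(s)=y$ and the clock rings at $y$.

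To control the coincidence case I would first extract a global ordering of the height functions from the concatenation property \eqref{eq_concatenation_property_height_fct}. Evaluating the minimum there at the single point $y=x_1$ gives $h(j,\tau)\le h(x_1,t_1)+g(j,\tau)$ for all $j\in\Z$ and all $\tau\in[t_1,t_2]$, i.e.\ $h\le g+c_0$ with the constant $c_0=h(x_1,t_1)$ (equivalently, this ordering holds at time $t_1$ and is preserved by basic coupling). I would then recall the local move rule: since $\pm1$ are the only spatial increments of a height function, the rightmost step path leaves $y$ to the left only on an \emph{up-slope} of $g$ (holes at $y$ and $y+1$), stays at $y$ when a jump occurs in $g$ (particle at $y$, hole at $y+1$), and otherwise moves weakly to the right. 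The first two are the only configurations in which $x^{\textup{step},r}(s^-)\le y$, so it is exactly there that the invariant is in danger, and the task is to show that the coupled profile of $h$ then also sends $x$ weakly to the left, respectively lets it stay at most at $y$.

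For this I would try to combine the ordering $h\le g+c_0$ with the geodesic property \eqref{eq_geodesic_property} of $x$, arguing that the coincidence sites are (local) minima of $g-h$. At such a minimum the sign of the discrete gradient of $g-h$ would force the spatial slopes of $h$ at $y$ to dominate those of $g$, which in the up-slope case yields a hole of $h$ at $y$ (hence no jump of $h$ at $(y,s)$, so $x$ does not stay) and a downward left slope $h(y-1,s)=h(y,s)-1$ (so $y-1$ is a valid, lower-height neighbour for $x$). Running through the remaining $g$-configurations the same way would give $x(s^-)\le x^{\textup{step},r}(s^-)$ and close the induction, yielding $x(\tau)\le x^{\textup{step},r}(\tau)$ on all of $[t_1,t_2]$.

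The step I expect to be the genuine obstacle is precisely this coincidence case, and within it the configuration where $g$ has an up-slope at $y$ (forcing $x^{\textup{step},r}$ to $y-1$) while $h$ has a local maximum at $y$: the global ordering $h\le g+c_0$ alone is then consistent with $h$ having a particle at $y+1$, so the \emph{non-unique} path $x$ is a priori allowed to choose the right neighbour $y+1$, which would break the ordering. Ruling this out is where both the rightmost property of the step path and the endpoint hypothesis $x(t_1)\le x_1$ must enter. I would handle it either by first dominating $x$ by the rightmost backwards path of $h$ started from $x(t_2)$ — using that, for a fixed height function, backwards paths do not cross the rightmost one — and then comparing rightmost paths of $h$ and $g$ through the ordering $h\le g+c_0$; or, more directly, by a first-violation argument that traces a putative rightward escape down to time $t_1$ and contradicts $x(t_1)\le x_1$. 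Making one of these two mechanisms rigorous, so that the local analysis is not defeated by the freedom in choosing $x$, is the crux of the proof.
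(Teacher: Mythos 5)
Your outer argument --- backwards induction over the Poisson events, the observation that a strict ordering can degrade to equality but cannot reverse in a single event, and the reduction to the coincidence case $x(s)=x^{\textup{step},r}(s)=y$ --- is exactly the structure of the paper's proof, and you correctly isolate the dangerous sub-case (up-slope of $g=h^{\textup{step}}_{x_1,t_1}$ at $y$ while $h$ permits the non-unique path to step to $y+1$). But the tools you propose for the coincidence case do not close it, as you yourself concede (``making one of these two mechanisms rigorous \dots is the crux''), and the gap is genuine. First, the global ordering $h\le g+c_0$ with $c_0=h(x_1,t_1)$ is just the triangle inequality for $\pm1$ increments: it holds for \emph{every} height function $h$, so it cannot encode the hypothesis $x(t_1)\le x_1$, which is indispensable. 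Indeed, take $h(\cdot,t_1)=|\cdot-x_0|$ to be itself a step profile centred at $x_0\gg x_1$: then $h\le g+c_0$ holds, yet backwards paths of $h$ from $(x_2,t_2)$ localise near $x_0$ at time $t_1$ while those of $g$ localise near $x_1$, so the conclusion fails (and, consistently, $x(t_1)\le x_1$ fails). This same example shows that your first fallback mechanism cannot work: the rightmost backwards path of $h$ started from $x(t_2)$ carries no endpoint information at $t_1$, and the principle ``$h\le g+c_0$ plus ordered starting points implies ordered rightmost paths'' is false. Second, your claim that coincidence sites are local minima of $g-h$ is justified only on the left of $y$; nothing prevents $g$ from having a particle at $y+1$ while $h$ has a hole there, so the slope domination you want on the right does not follow.

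What is missing is the invariant the paper actually propagates: under basic coupling, \emph{for every $\tau\in[t_1,t_2]$, each particle of $h$ at a site $\le x(\tau)$ is matched by a particle of $g$ at the same site} (equivalently, $(g-h)(\cdot,\tau)$ is non-increasing on $(-\infty,x(\tau)]$). This is precisely where $x(t_1)\le x_1$ enters: the step profile $g$ has particles at \emph{all} sites $\le x_1$ at time $t_1$, so the invariant holds initially; it is then propagated forward in time by a minimal-violation argument on the particle dynamics (a violation forces the scenario where the $h$-particle's jump is suppressed while its matched $g$-particle jumps, and chasing the blocking $h$-particle at $p^*=p(\tau)+1$ against the invariant at time $\tau^-$ yields a contradiction for every possible position of $x(\tau)$ relative to $p^*$). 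With this invariant in hand, applied at the \emph{updated} position $x(\tau^-)$, both dangerous sub-cases evaporate: if $x$ steps to $y+1$, there is a particle of $h$, hence of $g$, at $y+1$, so the rightmost $g$-path steps right as well; and if the $g$-path steps left (holes of $g$ at $y$ and $y+1$), then $h$ can have a particle at neither $y$ nor $y+1$, so $x$ is forced left too. Your second fallback (``first-violation argument traced down to $t_1$'') is the right kind of move --- the paper's proof is exactly a first-violation argument --- but tracing the path-crossing event itself leads nowhere; the object that must be traced is this particle-matching property relative to the moving path, and that is the idea your proposal lacks.
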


\begin{prop} \label{prop_comparison_leftmost_backwards_geodesics_to_step}
	Suppose $x(t_1) \geq x_1$ and $x(t_2) \geq x_2$. Let $x^{\textup{step},l}(\tau)$ be the leftmost backwards path with respect to $h^{\textup{step}}_{x_1,t_1}$ starting at time $t_2$ from position $x_2$.
	Then, it holds
	\begin{equation}
	x(\tau) \geq x^{\textup{step},l}(\tau) \text{ for all } \tau \in [t_1,t_2].
	\end{equation}
\end{prop}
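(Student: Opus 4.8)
The statement is the exact mirror image of Proposition~\ref{prop_comparison_rightmost_backwards_geodesics_to_step}, with all inequalities reversed and ``rightmost'' replaced by ``leftmost'', so my plan is to \emph{deduce} it from Proposition~\ref{prop_comparison_rightmost_backwards_geodesics_to_step} using the spatial reflection symmetry of TASEP, rather than repeating the argument. First I would fix the height function $h$ together with its graphical representation $(\mathcal{P}_z)_{z \in \Z}$ and introduce the reflected height function $h^*(j,s) := h(-j,s)$. The key observation is that $h^*$ is again a TASEP height function, driven by the reflected clocks $\mathcal{P}^*_z := \mathcal{P}_{-z}$: a site $j$ is a local minimum of $h^*(\cdot,s)$ exactly when $-j$ is a local minimum of $h(\cdot,s)$, and the jump that raises $h(-j,\cdot)$ by $2$ (triggered by $\mathcal{P}_{-j}$) raises $h^*(j,\cdot)$ by $2$ (triggered by $\mathcal{P}^*_j$). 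Hence the entire basic coupling is preserved under $j \mapsto -j$.

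Next I would record the three correspondences that the reflection induces. (i) The step profile transforms into a step profile: $(h^{\textup{step}}_{x_1,t_1})^*(j,s) = h^{\textup{step}}_{x_1,t_1}(-j,s)$ is the height function of a step initial condition for $h^*$ centred at $-x_1$ and started at time $t_1$. (ii) If $\{x(\tau)\}$ is a backwards path of $h$, then $\{-x(\tau)\}$ is a backwards path of $h^*$: the update rule only refers to jump attempts at the current site and to height differences, both of which are preserved, while the left/right choice in the non-jump case is interchanged. Consequently the \emph{leftmost} backwards path of $h$ corresponds precisely to the \emph{rightmost} backwards path of $h^*$, and the starting data $(t_2,x_2)$ becomes $(t_2,-x_2)$. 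This last point is the step that most needs care, since it hinges on the tie-breaking convention reflecting correctly.

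With these correspondences in hand the reduction is immediate. Writing $x^*(\tau) = -x(\tau)$ for the backwards path of $h^*$, the hypotheses $x(t_1)\geq x_1$ and $x(t_2)\geq x_2$ become $x^*(t_1)\leq -x_1$ and $x^*(t_2)\leq -x_2$, which are exactly the hypotheses of Proposition~\ref{prop_comparison_rightmost_backwards_geodesics_to_step} applied to $h^*$ with marks $-x_1,-x_2$. That proposition yields $x^*(\tau)\leq x^{*,\textup{step},r}(\tau)$ for all $\tau\in[t_1,t_2]$, where $x^{*,\textup{step},r}$ is the rightmost backwards path with respect to the reflected step profile started at $(t_2,-x_2)$; by correspondence (ii) this equals $-x^{\textup{step},l}(\tau)$. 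Undoing the reflection gives $x(\tau)\geq x^{\textup{step},l}(\tau)$ for all $\tau\in[t_1,t_2]$, as claimed.

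The argument contains no genuine analytic difficulty; the only place requiring attention, and which I expect to be the main (if minor) obstacle, is verifying that the backwards-path construction---in particular its tie-breaking between left and right moves and its behaviour at local maxima---is genuinely equivariant under the reflection, so that ``leftmost'' really maps to ``rightmost''. Should one prefer a self-contained proof, the alternative is to mirror the proof of Proposition~\ref{prop_comparison_rightmost_backwards_geodesics_to_step} line by line: one tracks the two backwards paths jointly backwards in time from the ordered endpoints and uses the extremality of the step profile (via the concatenation identity \eqref{eq_concatenation_property_height_fct}) together with basic coupling to preserve the ordering whenever the paths occupy a common site, the delicate case being their meeting at a shared local maximum, where the leftmost convention must be checked to keep $x(\tau)$ on the correct side.
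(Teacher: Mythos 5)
Your proposal is correct, but it takes a genuinely different route from the paper. The paper does not use reflection at all: it proves the leftmost version by repeating the argument of Proposition~\ref{prop_comparison_rightmost_backwards_geodesics_to_step} verbatim, replacing the key invariant by its particle--hole dual, namely that for all $\tau \in [t_1,t_2]$, whenever there is a hole of $h$ at a site $\geq x(\tau)+1$, there is also a hole of $h^{\textup{step}}_{x_1,t_1}$ at that site; the rest of the case analysis then mirrors the original proof with all inequalities reversed. Your reduction via the map $h^*(j,s) = h(-j,s)$ is sound: in height-function (corner-growth) language, the flip dynamics is equivariant under spatial reflection of the clocks (in particle language this is the combined particle--hole exchange and spatial reflection, which preserves TASEP), the step profile centred at $x_1$ reflects to the step profile centred at $-x_1$, and the backwards-path update rule refers only to clock rings at the current site, to whether the height increased by $2$, and to height differences with neighbours, all of which are reflection-equivariant, so the set of backwards paths from $(t_2,x_2)$ maps order-reversingly onto the set of backwards paths of $h^*$ from $(t_2,-x_2)$, carrying the leftmost path to the rightmost one. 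You correctly single out this equivariance as the one point requiring care. What your approach buys is economy: Proposition~\ref{prop_comparison_leftmost_backwards_geodesics_to_step} becomes a corollary of Proposition~\ref{prop_comparison_rightmost_backwards_geodesics_to_step} with no new combinatorial work, and the same symmetry can be reused elsewhere. What the paper's approach buys is that it stays entirely within the original coupling and orientation, so no reflection formalism needs to be set up and the hole-version of the ordering invariant is exhibited explicitly, which keeps the proof self-contained and makes the dual mechanism visible; your closing remark that one could alternatively mirror the proof line by line is in fact exactly what the paper does.
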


Proposition~\ref{prop_comparison_rightmost_backwards_geodesics_to_step} and Proposition~\ref{prop_comparison_leftmost_backwards_geodesics_to_step} allow to localise any backwards path $x(\tau)$ starting at a given space-time point $(x,t)$ on the whole time interval $[0,t]$ by only two ingredients: the region of its endpoint $x(0)$ and the localisation of backwards paths in a TASEP with step initial condition from~\cite{BF22}. We apply this strategy in Section~\ref{section_localisation_backwards_paths}. Notice that we did not make any assumptions about $h(\cdot,0)$.

The comparison results are related to Lemma 3.7 of~\cite{FN24}, which compares a rightmost backwards path in a TASEP with initially no particles to the right of the origin to the corresponding path in a TASEP with step initial condition. There, due to the attractiveness of TASEP, the order of particle configurations is maintained over time. In our case, the initial particle configurations (at time $t_1$) do not possess the right order on the whole lattice. Nonetheless, the order persists in a certain subregion, such that the backwards paths can still be compared. Similar arguments appear in the proof of Proposition 4.9 of~\cite{BF22}, see also Lemma 3.7 of~\cite{FG26} in the setting of particle positions.

\begin{proof}[Proof of Proposition~\ref{prop_comparison_rightmost_backwards_geodesics_to_step}]
	Suppose the following statement holds true: 	
	
	\textbf{Claim:} For all times $\tau \in [t_1,t_2]$, whenever there is a particle of $h$ at a site $\leq x(\tau)$, there is also a particle of $h^{\textup{step}}_{x_1,t_1}$ at this site.
	
	Then, looking backwards in time, we first note $x(t_2) \leq x_2 = x^{\textup{step},r}(t_2)$. Suppose there is some $\tau \in [t_1,t_2]$ such that $x(\tau) = x^{\textup{step},r}(\tau) = x^*$ and there is a jump event at $x^*$ at time $\tau$. We claim:
	\begin{itemize}
	\item[(a)] If $x(\tau^-)=x(\tau)+1$, then $x^{\textup{step},r}(\tau^-) = x^{\textup{step},r}(\tau) +1$.
	\item[(b)] If $x^{\textup{step},r}(\tau^-) = x^{\textup{step},r}(\tau)-1$, then $x(\tau^-) = x(\tau)-1$.
	\end{itemize}
	
	Regarding (a), we observe that $x(\tau^-)=x(\tau)+1$ occurs only if at time $\tau^-$, there is a particle of $h$ at site $x^*+1$. But since $x^* +1 = x(\tau^-)$, the claim implies that there is a particle of $h^{\textup{step}}_{x_1,t_1}$ as well. As $x^{\textup{step},r}(\tau)$ is the rightmost backwards path, this means $x^{\textup{step},r}(\tau^-) = x^{\textup{step},r}(\tau) +1$.
	
	Regarding (b), we note that $x^{\textup{step},r}(\tau^-) = x^{\textup{step},r}(\tau)-1$ implies that at time $\tau^-$, there are holes of $h^{\textup{step}}_{x_1,t_1}$ at $x^*$ and $x^*+1$.
	If $x(\tau^-) = x(\tau)$, then there would be a particle of $h$ at time $\tau^-$ at $x^* = x(\tau^-) $. If $x(\tau^-) = x(\tau)+1$, then there would be a particle of $h$ at time $\tau^-$ at $x^* +1 = x(\tau^-)$. Since both options contradict the claim, we deduce $x(\tau^-) = x(\tau)-1$.
	
	To conclude, we prove the claim. The statement holds true at time $t_1$ because $x(t_1) \leq x_1$. Suppose there exists a time $\tau \in [t_1,t_2]$ such that it is not valid any more. We can choose $\tau$ minimal. Then, there are particles of $h$ respectively $h^{\textup{step}}_{x_1,t_1}$ with positions $p(\tau)$ and $p^{\textup{step}}(\tau)$ such that $p(\tau^-) = p^{\textup{step}}(\tau^-)$ and at time $\tau$, there is no particle of $h^{\textup{step}}_{x_1,t_1}$ at the site $p(\tau) \leq x(\tau)$. The first assertion holds by minimality of $\tau$ because $p(\tau) \leq x(\tau)$ implies $p(u) \leq x(u)$ for all $u \in [t_1,\tau]$. The only possible scenario is that at time $\tau$, the jump attempt of $p(\tau)$ is suppressed while $p^{\textup{step}}(\tau)$ jumps, meaning $p^{\text{step}}(\tau) = p(\tau) +1 \eqqcolon p^*$. Then, there must be another particle of $h$ with position $p_0(\tau) = p^*$. The minimality of $\tau$ implies $p^* = p_0(\tau) = p_0(\tau^-) > x(\tau^-) $. Since both sites $p^*-1$ and $p^*$ are occupied in $h$ at time $\tau^-$, $x(\tau) \geq p^*-1$ would lead to $x(\tau^-) \geq p^*$, a contradiction. Therefore, we get $x(\tau) < p^*-1$. But this in turn contradicts $p^*-1=p(\tau) \leq x(\tau)$. Thus, the time $\tau$ did not exist.
\end{proof}

\begin{proof}[Proof of Proposition~\ref{prop_comparison_leftmost_backwards_geodesics_to_step}]
	The proof of Proposition~\ref{prop_comparison_leftmost_backwards_geodesics_to_step} is analogous to the proof of Proposition~\ref{prop_comparison_rightmost_backwards_geodesics_to_step}. Here, we use the following fact: for all $\tau \in [t_1,t_2]$, whenever there is a hole of $h$ at a site $\geq x(\tau)+1$, there is a hole of $h^{\textup{step}}_{x_1,t_1}$ at the same position.
\end{proof}

\subsection{Localisation of backwards paths} \label{section_localisation_backwards_paths}

In this section, we first localise the backwards paths of TASEP height functions for general initial data. Then, we show that the assumptions of our result are fulfilled for stationary and (half-)periodic initial conditions.

\begin{prop} \label{prop_localisation_backwards_geodesics_general}
	Consider a backwards path in a TASEP with any initial condition, starting at $x(t) = \alpha t$ with $\alpha \in \R$ fixed. For some $\beta$ with $\alpha - \beta \in (-1,1)$, suppose there are constants $C,c > 0$ such that uniformly for $t$ large enough, $\Pb(|x(0) - \beta t| \geq M t^{2/3}) \leq C e^{-c M^2}$. Then, there exist constants $C,c > 0$ such that uniformly for $t$ large enough, it holds
	\begin{equation}
	\Pb(|x(\tau) - \beta t - (\alpha - \beta) \tau| \leq M t^{2/3} \text{ for all } \tau \in [0,t]) \geq 1 - C e^{-c M^2}.
	\end{equation}
\end{prop}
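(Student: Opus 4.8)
The plan is to localise the backwards path $x(\tau)$ by sandwiching it between two paths in TASEPs with step initial condition, using the comparison results of Proposition~\ref{prop_comparison_rightmost_backwards_geodesics_to_step} and Proposition~\ref{prop_comparison_leftmost_backwards_geodesics_to_step}, and then invoking the known localisation of backwards paths in the step case from Proposition~4.9 of~\cite{BF22}. The key observation is that the hypotheses of this proposition only constrain the two \emph{endpoints} $x(t)=\alpha t$ and $x(0)$; the comparison propositions are designed precisely to convert such endpoint control into control on the whole trajectory by reference to the step process, without any assumption on $h(\cdot,0)$.

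\medskip
\noindent\textbf{Step 1 (good endpoint event).} First I would fix $M$ and restrict to the event $E_M = \{|x(0) - \beta t| < M t^{2/3}\}$, which by hypothesis has probability at least $1 - C e^{-c M^2}$. On $E_M$ the path starts at $x(t)=\alpha t$ and ends inside $[\beta t - M t^{2/3}, \beta t + M t^{2/3}]$.

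\medskip
\noindent\textbf{Step 2 (upper barrier).} To bound $x(\tau)$ from above, I would choose a single step initial condition whose characteristic line passes (up to $\Or(M t^{2/3})$) through both endpoints from the right. Concretely, pick $x_1$ and the data $h^{\textup{step}}_{x_1,0}$ so that, on $E_M$, the two endpoint inequalities $x(0) \leq x_1$ and $x(t) \leq x_2$ required by Proposition~\ref{prop_comparison_rightmost_backwards_geodesics_to_step} hold with $x_1 = \beta t + M t^{2/3}$ and $x_2 = \alpha t$. Applying the proposition with $t_1=0$, $t_2=t$ gives $x(\tau) \leq x^{\textup{step},r}(\tau)$ for all $\tau \in [0,t]$, where $x^{\textup{step},r}$ is the rightmost backwards path of the step process started from $(\alpha t, t)$. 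The localisation of backwards paths for the step initial condition (Proposition~4.9 of~\cite{BF22}) then confines $x^{\textup{step},r}(\tau)$ to within $\Or(M t^{2/3})$ of its linear interpolation, i.e.\ of $\beta t + (\alpha-\beta)\tau$, with probability at least $1 - C e^{-c M^2}$; the admissibility of this comparison uses exactly the slope condition $\alpha-\beta \in (-1,1)$, which keeps the characteristic direction in the physical cone.

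\medskip
\noindent\textbf{Step 3 (lower barrier and union bound).} Symmetrically, a step initial condition placed to the left together with Proposition~\ref{prop_comparison_leftmost_backwards_geodesics_to_step} yields a leftmost step path $x^{\textup{step},l}(\tau)$ with $x(\tau) \geq x^{\textup{step},l}(\tau)$, again localised within $\Or(M t^{2/3})$ of $\beta t + (\alpha-\beta)\tau$ by the step-case result. Intersecting the good endpoint event $E_M$ with the two step-localisation events and taking a union bound over the (finitely many) exceptional probabilities, each of the form $C e^{-c M^2}$, gives $|x(\tau) - \beta t - (\alpha-\beta)\tau| \leq M t^{2/3}$ simultaneously for all $\tau \in [0,t]$ with probability at least $1 - C e^{-c M^2}$, after relabelling the constants $C,c$.

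\medskip
I expect the main obstacle to be Step~2/Step~3: aligning the step initial conditions so that the endpoint comparison hypotheses $x(t_i) \leq x_i$ (resp.\ $\geq$) are genuinely implied by $E_M$ at \emph{both} endpoints while keeping the step characteristic slope equal to $\alpha-\beta$, and then quoting the step-case localisation with matching constants. One must check that the centering $\beta t$ of the endpoint and the starting slope $\alpha$ are compatible with a single step profile whose rightmost (resp.\ leftmost) backwards path stays near the line $\beta t + (\alpha-\beta)\tau$; this is where the restriction $\alpha-\beta \in (-1,1)$ is essential, since it guarantees the relevant direction lies strictly inside the TASEP cone so that Proposition~4.9 of~\cite{BF22} applies with Gaussian-type transversal fluctuations of order $t^{2/3}$.
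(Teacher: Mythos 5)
Your proposal is correct and follows essentially the same route as the paper: restrict to the good endpoint event, sandwich the path between rightmost/leftmost step-case paths via Proposition~\ref{prop_comparison_rightmost_backwards_geodesics_to_step} and Proposition~\ref{prop_comparison_leftmost_backwards_geodesics_to_step} with step data centred near $\beta t$, and conclude with the step-case localisation of Proposition~4.9 of~\cite{BF22}. The only cosmetic difference is bookkeeping: the paper shifts both the step centre \emph{and} the path's starting point by $\pm\tfrac{M}{2}t^{2/3}$ so the step characteristic is exactly parallel to the target line $\beta t+(\alpha-\beta)\tau$, whereas you keep the start at $\alpha t$ and absorb the resulting $\Or(Mt^{2/3})$ tilt by relabelling constants, which is harmless.
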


\begin{proof}
	Let $E_0 \coloneqq \{ |x(0) - \beta t| < \tfrac{M}{2} t^{2/3} \}$. Then, given that $E_0$ occurs, Proposition~\ref{prop_comparison_rightmost_backwards_geodesics_to_step} and Proposition~\ref{prop_comparison_leftmost_backwards_geodesics_to_step} yield:
	\begin{itemize}
		\item For all $\tau \in [0,t]$, it holds $x(\tau) \leq x^{\text{step},r}(\tau)$, where $x^{\text{step},r}(\tau)$ is the rightmost backwards path with $x^{\text{step},r}(t) = \alpha t + \tfrac{M}{2} t^{2/3}$ in a TASEP with step initial condition centred at $\beta t + \tfrac{M}{2} t^{2/3}$.
		\item For all $\tau \in [0,t]$, it holds $x(\tau) \geq x^{\text{step},l}(\tau)$, where $x^{\text{step},l}(\tau)$ is the leftmost backwards path with $x^{\text{step},l}(t) = \alpha t - \tfrac{M}{2} t^{2/3}$ in a TASEP with step initial condition centred at $\beta t -\tfrac{M}{2} t^{2/3}$.
	\end{itemize}
	As a consequence, we obtain
	\begin{equation} \begin{aligned}
	&\Pb(|x(\tau) - \beta t -  (\alpha - \beta ) \tau| > M t^{2/3} \text{ for some } \tau \in [0,t]) \\
	& \leq C e^{-cM^2} + \Pb(x^{\text{step},r}(\tau) > \beta t + (\alpha-\beta)\tau + M t^{2/3} \text{ for some } \tau \in [0,t]) \\
	&\hphantom{\leq} + \Pb(x^{\text{step},l}(\tau) < \beta t + (\alpha-\beta)\tau - M t^{2/3} \text{ for some } \tau \in [0,t]).
	\end{aligned} \end{equation}
	Since we consider TASEPs with step initial condition shifted by $\beta t \pm \tfrac{M}{2} t^{2/3}$, we find
	$(x^{\text{step},r}(\tau)) \overset{(d)}{=} (x^{0,r}(\tau) + \beta t + \tfrac{M}{2} t^{2/3})$ and $(x^{\text{step},l}(\tau)) \overset{(d)}{=} (x^{0,l}(\tau) + \beta t - \tfrac{M}{2} t^{2/3})$, where $x^{0,r}(\tau)$ and $x^{0,l}(\tau)$ are rightmost respectively leftmost backwards paths in a TASEP with step initial condition centred at the origin, starting at site $ (\alpha - \beta) t$ at time $t$. The sum above equals
	\begin{equation} \label{eq_pf_prop_localisation_0}
	\begin{aligned}
	& C e^{-c M^2} + \Pb( x^{0,r}(\tau) > (\alpha-\beta)\tau + \tfrac{M}{2} t^{2/3} \text{ for some } \tau \in [0,t]) \\
	& + \Pb( x^{0,l}(\tau) < (\alpha-\beta)\tau - \tfrac{M}{2} t^{2/3} \text{ for some } \tau \in [0,t]).
	\end{aligned}
	\end{equation}
	By Proposition 4.9 of~\cite{BF22}, see also Proposition 3.8 of~\cite{FN24}, we conclude that $ \eqref{eq_pf_prop_localisation_0} \leq C e^{-c M^2}$ uniformly for all $t$ large enough.
\end{proof}

\subsubsection{Localisation of backwards paths in a stationary TASEP} \label{section_localisation_backwards_paths_stationary}

We can apply Proposition~\ref{prop_localisation_backwards_geodesics_general} to a stationary TASEP. Although we use a slightly different definition of backwards paths than~\cite{BF22}, the following localisation of endpoints still applies because it only uses the property \eqref{eq_geodesic_property}.

\begin{lem}[Proposition 4.7 of~\cite{BF22}] \label{lemma_end_point_geodesics_stationary_TASEP}
	Consider a backwards path in a stationary TASEP with density $\rho \in (0,1)$, starting at $x(t) = (1-2\rho)t$. Then, there exist constants $C,c > 0$ such that uniformly for all $t$ large enough, it holds
	\begin{equation}
	\Pb(|x(0)| \geq M t^{2/3}) \leq C e^{-c M^2}.
	\end{equation}
\end{lem}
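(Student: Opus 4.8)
The plan is to realize the endpoint $x(0)$ as the minimizer in the variational (concatenation) formula and then to exploit the parabolic curvature of the macroscopic step profile, which is centred exactly at the origin when one starts on the characteristic line $x(t)=(1-2\rho)t$. Since a backwards path is in particular a backwards geodesic, it satisfies \eqref{eq_geodesic_property} at $\tau=0$, so combined with \eqref{eq_concatenation_property_height_fct} the endpoint obeys
\begin{equation*}
h(x(t),t)=h(x(0),0)+h^{\textup{step}}_{x(0),0}(x(t),t)=\min_{y\in\Z}\bigl\{h(y,0)+h^{\textup{step}}_{y,0}(x(t),t)\bigr\}.
\end{equation*}
Hence $x(0)$ is a minimizer of $y\mapsto h(y,0)+h^{\textup{step}}_{y,0}((1-2\rho)t,t)$; in the last-passage correspondence this is precisely the exit point of the stationary geodesic from the boundary, and the claim is the standard cube-root bound on that exit point.

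Next I would identify the macroscopic cost and its curvature. Writing $v=1-2\rho$ and $y=\beta t$, the stationary height function has deterministic slope $v$, so $h(\beta t,0)\approx v\beta t$, while the step profile satisfies $h^{\textup{step}}_{\beta t,0}(vt,t)\approx t\,\tfrac12\bigl(1+(v-\beta)^2\bigr)$. The rescaled macroscopic objective $F(\beta)=v\beta+\tfrac12(1+(v-\beta)^2)$, so that the true cost is $tF(\beta)$, has $F'(\beta)=\beta$ and $F''(\beta)=1$, whence its unique minimum sits at $\beta=0$ with $F(\beta)-F(0)=\tfrac12\beta^2$. Thus displacing the candidate endpoint to $|y|\sim M t^{2/3}$, i.e. $|\beta|\sim M t^{-1/3}$, raises the deterministic cost by an amount of order $M^2 t^{1/3}$.

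The third step is to show that this curvature gap dominates the random fluctuations, so that a far endpoint is very unlikely. For the minimizer to lie beyond $M t^{2/3}$, at some $y$ with $|y|\geq M t^{2/3}$ the objective must undercut its value at $y=0$, which forces the fluctuations to overcome the gap of order $M^2 t^{1/3}$. The two fluctuation sources are: the centred initial height $h(y,0)-vy$, a sum of order $|y|$ Bernoulli increments and hence Gaussian on the scale $\sqrt{|y|}\sim\sqrt{M}\,t^{1/3}$; and the KPZ fluctuations of $h^{\textup{step}}_{y,0}(vt,t)$ and $h^{\textup{step}}_{0,0}(vt,t)$ about their means, each of order $t^{1/3}$ with one-point moderate-deviation tails. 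A dyadic decomposition over the scales $|y|\in[2^k M t^{2/3},2^{k+1}M t^{2/3}]$, where the gap is of order $4^k M^2 t^{1/3}$, together with a union bound, then yields a total probability bounded by $\sum_{k\geq0}Ce^{-c\,4^kM^2}\leq Ce^{-cM^2}$, which is the assertion.

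The main obstacle is making this last step uniform and summable with the correct tail inputs. One needs one-point upper- and lower-tail estimates for the step-initialised TASEP height function (equivalently exponential last-passage times) in the moderate-deviation regime, with Gaussian-or-better decay in the rescaled fluctuation, holding uniformly as the base point $y$ ranges over all $t^{2/3}$-scales; for $y$ at macroscopic distance the argument must be supplemented by limit-shape large-deviation bounds that are exponentially small in $t$. Combining the linear-in-$|y|$ Gaussian contribution of the initial data with the step-profile tails and verifying that the curvature gap $\sim 4^kM^2 t^{1/3}$ beats both on every scale is the crux; since all these contributions decay at least as fast as $e^{-c4^kM^2}$, the stated (non-optimal) bound $Ce^{-cM^2}$ follows after summation.
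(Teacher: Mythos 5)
Your overall route is the right one, and it is essentially the route of the source the paper relies on: note first that the paper itself does not prove this lemma but imports it as Proposition~4.7 of~\cite{BF22}, remarking only that the modified definition of backwards paths is immaterial because the statement uses nothing beyond the geodesic property \eqref{eq_geodesic_property}. The closest argument actually written out in the paper is the one for (half-)periodic initial data, Proposition~\ref{prop_geodesic_end_point_deterministic_IC_half_periodic} in Appendix~\ref{appendix_A}, and your plan matches its template: identify $x(0)$ as a minimiser of $y \mapsto h(y,0)+h^{\textup{step}}_{y,0}((1-2\rho)t,t)$ via \eqref{eq_geodesic_property} and \eqref{eq_concatenation_property_height_fct}, play the parabolic gap of order $M^2t^{1/3}$ against the fluctuations, handle macroscopic $y$ by limit-shape bounds, and sum over scales.

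There is, however, a genuine gap at the critical scales $|y| \asymp M t^{2/3}$: a union bound cannot close the argument there. A site-by-site union bound over a block of $\sim 2^k M t^{2/3}$ integer sites multiplies the one-point tail $e^{-c4^kM^2}$ by $2^k M t^{2/3}$, and $t^{2/3}e^{-cM^2}$ is \emph{not} bounded by $Ce^{-c'M^2}$ uniformly in large $t$ for fixed $M$; on the other hand, a union bound over dyadic scales alone presupposes a per-block bound on the probability that the \emph{infimum over the whole block} undercuts the threshold, which is exactly the estimate that needs proving and does not follow from one-point tails. This is the one missing idea, and it is what the paper's Appendix~\ref{appendix_A} (following Propositions~4.7 and 4.8 of~\cite{BF22}) supplies: anchor each block $I_\ell$ at its endpoint, where a one-point estimate gives $Ce^{-c\ell^2M^2}$ as in \eqref{eq_proof_prop_endpoint_det_IC_5}, and then control the oscillation of $y\mapsto h^{\textup{step}}_{y,0}(vt,t)$ \emph{inside} the block, as in \eqref{eq_proof_prop_endpoint_det_IC_6}, by comparing with a stationary TASEP whose backwards geodesic crosses that of the step process (Remark~4.4 and Lemma~4.6 of~\cite{BF22}); this reduces the within-block oscillation to that of a Bernoulli random walk, which is then handled by Doob's submartingale inequality, cf.\ \eqref{eq_proof_prop_endpoint_det_IC_6.5}. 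In the stationary setting of the present lemma the same device is needed, and in addition the oscillation of the initial random walk $h(y,0)-(1-2\rho)y$ within each block requires the same maximal-inequality treatment — your proposal handles it only pointwise. With this within-block control inserted, your outline becomes the proof of Proposition~4.7 of~\cite{BF22}; without it, the step you yourself flag as "the crux" fails as written.
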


Combining Lemma~\ref{lemma_end_point_geodesics_stationary_TASEP} with Proposition~\ref{prop_localisation_backwards_geodesics_general} for $\alpha = 1-2\rho$ and $\beta =0$, we obtain:

\begin{cor} \label{cor_localisation_backwards_geodesics_stationary_TASEP}
	Consider a backwards path in a stationary TASEP with density $\rho \in (0,1)$, starting at $x(t) = (1-2\rho)t$. Then, there exist constants $C,c > 0$ such that uniformly for all $t$ large enough, it holds
	\begin{equation}
	\Pb(|x(\tau) - (1-2\rho)\tau| \leq M t^{2/3} \text{ for all } \tau \in [0,t]) \geq 1 - C e^{-c M^2}.
	\end{equation}
\end{cor}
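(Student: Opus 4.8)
The plan is to obtain the statement as an immediate consequence of Proposition~\ref{prop_localisation_backwards_geodesics_general}, applied to the stationary TASEP with the specific parameter choice $\alpha = 1-2\rho$ and $\beta = 0$. The backwards path under consideration starts at $x(t) = (1-2\rho)t = \alpha t$, which matches the starting-point hypothesis of the proposition exactly. Thus the entire task reduces to checking that the two hypotheses of Proposition~\ref{prop_localisation_backwards_geodesics_general} are satisfied and then reading off its conclusion.

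First I would verify the slope condition $\alpha - \beta \in (-1,1)$. With $\alpha = 1-2\rho$ and $\beta = 0$ this reads $1-2\rho \in (-1,1)$, which holds for every $\rho \in (0,1)$; this is simply the statement that the characteristic speed $1-2\rho$ is subcharacteristic. Next, the proposition requires a Gaussian-type tail bound on the endpoint, namely $\Pb(|x(0) - \beta t| \geq M t^{2/3}) \leq C e^{-cM^2}$ uniformly for $t$ large. With $\beta = 0$ this is precisely the content of Lemma~\ref{lemma_end_point_geodesics_stationary_TASEP} (Proposition~4.7 of~\cite{BF22}), which controls the displacement of the endpoint $x(0)$ of a backwards path in the stationary TASEP. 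Substituting $\alpha = 1-2\rho$ and $\beta = 0$ into the conclusion of Proposition~\ref{prop_localisation_backwards_geodesics_general} then yields $\Pb(|x(\tau) - (1-2\rho)\tau| \leq M t^{2/3} \text{ for all } \tau \in [0,t]) \geq 1 - C e^{-cM^2}$, which is exactly the asserted localisation.

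Since both inputs are already available, I do not expect a substantial analytical obstacle; the proof is essentially a matter of matching notation and verifying compatibility of the hypotheses. The one point that warrants care is that the endpoint localisation of Lemma~\ref{lemma_end_point_geodesics_stationary_TASEP} was proven in~\cite{BF22} for their definition of backwards paths, whereas here we work with the slightly different definition from Definition~3.5 of~\cite{FN24}. As already noted in the text, this causes no difficulty, because the endpoint bound relies only on the backwards geodesic property~\eqref{eq_geodesic_property}, which our backwards paths inherit (they are backwards geodesics by Proposition~4.2 of~\cite{BF22}). Similarly, Proposition~\ref{prop_localisation_backwards_geodesics_general} was established for arbitrary backwards paths through the comparison results of Section~\ref{section_comparison_backwards_paths}, so it applies verbatim to the stationary case. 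Hence the combination is clean, and I would expect the proof to consist of a single short paragraph invoking the two cited results.
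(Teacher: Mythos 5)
Your proposal is correct and is exactly the paper's own argument: the corollary is obtained by combining Lemma~\ref{lemma_end_point_geodesics_stationary_TASEP} (the endpoint bound, with $\beta=0$) with Proposition~\ref{prop_localisation_backwards_geodesics_general} for $\alpha = 1-2\rho$, noting $1-2\rho \in (-1,1)$. Your remark on the compatibility of the two definitions of backwards paths also matches the paper's own caveat preceding Lemma~\ref{lemma_end_point_geodesics_stationary_TASEP}.
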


\subsubsection{Localisation of backwards paths in a TASEP with (half-)periodic initial condition}
\label{section_localisation_backwards_paths_periodic}

Next, we examine a TASEP with a deterministic initial condition. We suppose that initially, its densities on $\Z_{\leq 0}$ and $\N$ are $\rho \in (0,1)$ and $\lambda \in [0,\rho]$, such that its initial height profile satisfies
\begin{equation} \label{equation_initial_condition_half_periodic_TASEP}
\begin{aligned}
h(j,0) = \begin{cases} (1-2\rho) j + H(j),  &j < 0, \\ (1-2\lambda)j + H(j),  &j \geq 0, \end{cases}
\end{aligned}
\end{equation}
with $ \| H  \|_{\infty}  < \infty$. We localise the endpoints of backwards paths:

\begin{prop} \label{prop_geodesic_end_point_deterministic_IC_half_periodic}
	Consider a backwards path in a TASEP with the initial condition \eqref{equation_initial_condition_half_periodic_TASEP}, starting from $x(t) =\alpha t$ with $\alpha \in [1-2\rho, 1-2\lambda] \cap (-1,1)$. Then, there exist constants $C,c > 0$ such that uniformly for all $t$ large enough, it holds
	\begin{equation}
	\Pb(|x(0)| \geq M t^{2/3}) \leq C e^{-c M^2}.
	\end{equation}
\end{prop}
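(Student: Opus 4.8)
The plan is to exploit the variational (concatenation) description of the half-periodic height function and to show that the minimised quantity has a quadratic well at the origin whose depth, as a function of the spatial displacement $y$, is of order $y^2/t$; the slope hypothesis $\alpha\in[1-2\rho,1-2\lambda]\cap(-1,1)$ is exactly what places this well at the origin. Concretely, applying \eqref{eq_concatenation_property_height_fct} with $\tau=0$ together with the geodesic property \eqref{eq_geodesic_property}, the endpoint $x(0)$ is a minimiser of
\begin{equation}
V_t(y)=h(y,0)+h^{\text{step}}_{y,0}(\alpha t,t),\qquad y\in\Z.
\end{equation}
Hence $\{x(0)>Mt^{2/3}\}\subseteq\{\exists\,y>Mt^{2/3}:V_t(y)\le V_t(0)\}$, and symmetrically for $x(0)<-Mt^{2/3}$, so it suffices to bound these two events by $Ce^{-cM^2}$.

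First I would isolate the deterministic profile. By \eqref{equation_initial_condition_half_periodic_TASEP}, $h(y,0)=(1-2\lambda)y+\Or(1)$ for $y\ge 0$ and $h(y,0)=(1-2\rho)y+\Or(1)$ for $y<0$, while the mean of the step height function in the rarefaction fan is parabolic, $\E[h^{\text{step}}_{y,0}(\alpha t,t)]=t\,\phi(\alpha-y/t)+\Or(t^{1/3})$ with $\phi$ convex of curvature one on $(-1,1)$. A short computation then gives, for $y>0$ inside the fan, $\E[V_t(y)]-\E[V_t(0)]\ge\bigl((1-2\lambda)-\alpha\bigr)y+\tfrac{y^2}{2t}\ge\tfrac{y^2}{2t}$, using $\alpha\le 1-2\lambda$; for $y<0$ one obtains the same lower bound $\tfrac{y^2}{2t}$ from $\alpha\ge 1-2\rho$. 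For $y$ outside the fan the flatter initial slopes $1-2\lambda,1-2\rho\in(-1,1)$ against the slope-one growth of $\phi$ produce an even larger linear gap, so those ranges are harmless and can be discarded by crude bounds. This is the analogue, for deterministic boundary data, of the stationary estimate in Lemma~\ref{lemma_end_point_geodesics_stationary_TASEP}.

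The substance of the proof is to control the fluctuations of $y\mapsto h^{\text{step}}_{y,0}(\alpha t,t)$ against this quadratic gap uniformly in $y$. The crucial point is that a naive union bound over the $\Or(t)$ lattice sites fails: each one-point estimate supplies a Gaussian-type factor, but the polynomial number of sites is not absorbed for fixed $M$. Instead I would use that $\{h^{\text{step}}_{y,0}(\alpha t,t)\}_y$ is, through the shared Poisson clocks, a single spatial process, so that its minimiser against a parabola is localised at the process level. This is precisely the localisation of backwards paths in a step-initial-condition TASEP, Proposition~4.9 of~\cite{BF22} (already invoked in the proof of Proposition~\ref{prop_localisation_backwards_geodesics_general}): the backward path from $(\alpha t,t)$ cannot reach, at time $0$, a site at distance $\gg Mt^{2/3}$ from its typical endpoint without an atypical deviation. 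Decomposing $\{|y|>Mt^{2/3}\}$ dyadically, on the block $|y|\asymp 2^kMt^{2/3}$ the deterministic gap is $\gtrsim 2^{2k}M^2t^{1/3}$ while the available fluctuation is $\Or(t^{1/3})$, and the process-level bound gives a contribution $\le C\exp(-c\,2^{2k}M^2)$; summing over $k\ge 0$ yields $Ce^{-cM^2}$.

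I expect the uniform-in-$y$ fluctuation control of the previous paragraph to be the main obstacle, since it is what distinguishes the argument from a one-point computation; the reduction to the step-initial-condition localisation of~\cite{BF22}, together with the quadratic curvature furnished by the slope hypothesis, is the mechanism that makes it work. The remaining ingredients—the macroscopic identification of the well at the origin and the one-point tail estimates for $h^{\text{step}}_{y,0}(\alpha t,t)$—are standard and are collected among the one-point estimates in the appendix. Finally, combining the bounds for $x(0)>Mt^{2/3}$ and $x(0)<-Mt^{2/3}$ gives $\Pb(|x(0)|\ge Mt^{2/3})\le Ce^{-cM^2}$, as claimed.
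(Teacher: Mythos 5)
Your reduction to the variational problem, the identification of the quadratic well from the slope hypothesis $\alpha\in[1-2\rho,1-2\lambda]$, and the diagnosis that a naive union bound over sites fails in the regime $|y|\asymp Mt^{2/3}$ are all correct and match the paper's proof. The gap is exactly in the step you flag as ``the substance of the proof''. What you need there is a \emph{field-level} statement: a lower-tail bound, uniform over a block $I$ of size $Mt^{2/3}$, for $\inf_{y\in I}\{h^{\text{step}}_{y,0}(\alpha t,t)-\E[h^{\text{step}}_{y,0}(\alpha t,t)]\}$ at depth comparable to the deterministic gap. Proposition~4.9 of~\cite{BF22} is not this statement: it localises the backwards paths of a TASEP \emph{with step initial condition}, i.e.\ the minimisers of the variational problem whose potential is the step profile $|y|$, around their own characteristic. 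It says nothing about where the coupled family $\{h^{\text{step}}_{y,0}(\alpha t,t)\}_{y}$ can dip relative to a \emph{different} potential, here the half-periodic profile $h(\cdot,0)$. The minimiser you must localise is the endpoint of a backwards path of the half-periodic process $h$, and the only mechanism in the paper for transferring path information from step initial data to other initial data is the pair of comparison results, Proposition~\ref{prop_comparison_rightmost_backwards_geodesics_to_step} and Proposition~\ref{prop_comparison_leftmost_backwards_geodesics_to_step} --- which take control of the path's endpoints as an \emph{input}. Invoking them here is circular, since the endpoint bound is precisely what Proposition~\ref{prop_geodesic_end_point_deterministic_IC_half_periodic} asserts; this is also why Proposition~\ref{prop_localisation_backwards_geodesics_general} is stated conditionally on an endpoint estimate rather than unconditionally.

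What the paper does to fill this hole, in the critical window $Mt^{2/3}<y\le Mt^{2/3+\delta}$ (beyond $Mt^{2/3+\delta}$ the gap is at least of order $M^2t^{1/3+2\delta}$, so crude one-point bounds plus a union over sites do absorb the entropy, as you suggest): first, colour-position symmetry converts the two-parameter family into a single profile, $(h^{\text{step}}_{y,0}(\alpha t,t))_{y\in\Z}\overset{(d)}{=}(h^{\text{step}}(\alpha t-y,t))_{y\in\Z}$; then, on each block $I_\ell=(\ell Mt^{2/3},(\ell+1)Mt^{2/3}]$, it combines a one-point estimate at the block edge with control of the within-block increments $h^{\text{step}}(\alpha t-y,t)-h^{\text{step}}(\alpha t-\ell Mt^{2/3},t)$. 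This increment control is the genuinely new ingredient: coupling with a \emph{stationary} TASEP of density $\sigma=\tfrac{1-\alpha}{2}$, the crossing of backwards geodesics (Remark~4.4 and Lemma~4.6 of~\cite{BF22}, together with the stationary endpoint localisation of Lemma~\ref{lemma_end_point_geodesics_stationary_TASEP}) dominates the step increments by stationary ones, i.e.\ by a Bernoulli random walk, and Doob's submartingale inequality then yields the per-block bound $Ce^{-c\ell^2M^2}$, which sums to $Ce^{-cM^2}$. Your dyadic scheme would go through verbatim once such a per-block process-level bound is available, but some argument of this kind has to be supplied; citing the step-initial-condition path localisation does not provide it.
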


Proposition~\ref{prop_geodesic_end_point_deterministic_IC_half_periodic} is proven in Appendix~\ref{appendix_A}. Combined with Proposition~\ref{prop_localisation_backwards_geodesics_general}, it yields:
\begin{cor} \label{cor_localisation_geodesics_deterministic_IC_half_periodic}
	Consider a backwards path in a TASEP with the initial condition \eqref{equation_initial_condition_half_periodic_TASEP}, starting from $x(t) =\alpha t$ with $\alpha \in [1-2\rho, 1-2\lambda] \cap (-1,1)$. Then, there exist constants $C,c > 0$ such that uniformly for all $t$ large enough, it holds
	\begin{equation}
	\Pb ( | x(\tau) - \alpha \tau | \leq M t^{2/3} \text{ for all } \tau \in [0,t]) \geq 1 - C e^{-c M^2}.
	\end{equation}
\end{cor}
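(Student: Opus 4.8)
The plan is to combine Proposition~\ref{prop_geodesic_end_point_deterministic_IC_half_periodic}, which controls the endpoint $x(0)$ of the backwards path, with the general localisation result in Proposition~\ref{prop_localisation_backwards_geodesics_general}. First I would observe that Proposition~\ref{prop_geodesic_end_point_deterministic_IC_half_periodic} gives exactly the hypothesis required by Proposition~\ref{prop_localisation_backwards_geodesics_general} in the special case $\beta = 0$: indeed, the endpoint bound $\Pb(|x(0)| \geq M t^{2/3}) \leq C e^{-cM^2}$ is precisely the tail estimate $\Pb(|x(0) - \beta t| \geq M t^{2/3}) \leq C e^{-cM^2}$ with $\beta = 0$. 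The admissibility condition $\alpha - \beta \in (-1,1)$ then becomes $\alpha \in (-1,1)$, which is guaranteed by the assumption $\alpha \in [1-2\rho, 1-2\lambda] \cap (-1,1)$ in the statement of the corollary.

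Having matched the hypotheses, I would simply invoke Proposition~\ref{prop_localisation_backwards_geodesics_general} with $\beta = 0$. Its conclusion reads
\begin{equation}
\Pb(|x(\tau) - \beta t - (\alpha - \beta)\tau| \leq M t^{2/3} \text{ for all } \tau \in [0,t]) \geq 1 - C e^{-cM^2},
\end{equation}
and substituting $\beta = 0$ collapses the centering term to $\alpha\tau$, yielding exactly the desired bound
\begin{equation}
\Pb(|x(\tau) - \alpha\tau| \leq M t^{2/3} \text{ for all } \tau \in [0,t]) \geq 1 - C e^{-cM^2}.
\end{equation}
The constants $C,c$ may change between the two propositions, but since both hold uniformly for $t$ large enough, the composed statement does as well.

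In this particular corollary there is essentially no obstacle: the real work has been front-loaded into Proposition~\ref{prop_geodesic_end_point_deterministic_IC_half_periodic} (deferred to Appendix~\ref{appendix_A}) and into the comparison machinery of Propositions~\ref{prop_comparison_rightmost_backwards_geodesics_to_step} and \ref{prop_comparison_leftmost_backwards_geodesics_to_step} underlying Proposition~\ref{prop_localisation_backwards_geodesics_general}. The only point meriting a moment's care is checking that the choice $\beta = 0$ is the correct one for this geometry: since the characteristic line for a path started at speed $\alpha$ in a half-periodic profile returns to a neighbourhood of the origin (rather than drifting off along a nonzero characteristic, as would happen for the stationary case centred elsewhere), the natural comparison center is $\beta = 0$. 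This is consistent with the fact that Proposition~\ref{prop_geodesic_end_point_deterministic_IC_half_periodic} localises $x(0)$ near $0$ rather than near some $\beta t$ with $\beta \neq 0$. Thus the corollary follows immediately by specialising the general localisation result.
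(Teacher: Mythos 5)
Your proof is correct and is exactly the paper's argument: the paper obtains Corollary~\ref{cor_localisation_geodesics_deterministic_IC_half_periodic} by combining the endpoint bound of Proposition~\ref{prop_geodesic_end_point_deterministic_IC_half_periodic} with Proposition~\ref{prop_localisation_backwards_geodesics_general} applied with $\beta=0$, just as you do. Your closing observation about why $\beta=0$ is the right centering is also consistent with the paper's setup.
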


\begin{remark}
If we choose $\lambda > \rho$, then at time $t$, there is a shock around the position $(1-\rho-\lambda)t$. For $x(t)$ starting at a position macroscopically away from $(1-\rho-\lambda)t$, we can localise its endpoint similarly as in Proposition~\ref{prop_geodesic_end_point_deterministic_IC_half_periodic}. At $(1-\rho-\lambda)t$, the two characteristic lines $-(\lambda-\rho)t+(1-2\rho)\tau$ and $(\lambda-\rho)t+(1-2\lambda)\tau$ meet, meaning that our arguments do not provide a localisation of the endpoint in a single $\mathcal{O}(t^{2/3})$--interval.

In all results of Section~\ref{section_localisation_backwards_paths}, we can choose $M = \mathcal{O}(t^{\e})$ with $\e \in (0,\tfrac{1}{3})$ fixed.
\end{remark}

\section{The translation-invariant stationary measure for the two-species TASEP}  \label{section_stationary_measure}

In this section, we recall the construction and analyse the translation-invariant stationary measure $\mu^{\rho_1,\rho_2}$ of a two-species TASEP with densities $\rho_1 \in (0,1)$ and $\rho_2 \in (0,1-\rho_1)$ of first and second class particles.

\subsection{Queueing representation} \label{section_queueing_representation}

The stationary measure is constructed through a queueing representation, following the interpretation introduced in~\cite{Ang06} and extended in~\cite{FM07}. For additional context, see also~\cite{AAV11,BSS23,EFM09}. In contrast to~\cite{BSS23,FM07}, our construction is for a process with rightward jumps rather than leftward.

We consider a line of arrival points $\mathcal{A}$ that is a Bernoulli process on $\Z$ with rate $\rho_1$, and a line of service points $\mathcal{S}$ that is a Bernoulli process on $\Z$ with rate $\rho_1 + \rho_2$. The processes $\mathcal{A}$ and $\mathcal{S}$ are independent of each other. We write
\begin{equation}
a(i) = \begin{cases}
1 &\text{ if there is a point at } i \in \Z \text{ in } \mathcal{A}, \\ 0 &\text{ else},
\end{cases}
\end{equation}
and define $s(i)$ similarly with respect to $\mathcal{S}$. Further, we denote by $\mathcal{A}_I$ respectively $\mathcal{S}_I$ the number of points of the processes in an interval $I \subseteq \Z$.

To construct the stationary measure $\mu^{\rho_1,\rho_2}$, we go through the points in $\mathcal{A}$ in an arbitrary order. For each point in $\mathcal{A}$ at some site $i \in \Z$, we draw an edge to the maximal position $j \leq i$ such that $s(j) = 1$ and the point in $\mathcal{S}$ is not connected to another point in $\mathcal{A}$ yet.

The sites $j$ obtained from this procedure form the positions of first class particles in our process. In the queueing interpretation, they mark departure points. The corresponding process $\mathcal{D}$ is again a Bernoulli process with rate $\rho_1$. We define $d(i)$ similarly as $a(i),s(i)$ above.
The unused points in $\mathcal{S}$ are taken as the positions of second class particles. By this means, second class particles have density $\rho_2$ but are not positioned independently of each other. We illustrate this construction in Figure~\ref{figure_queueing_representation_stationary_measure}. A realisation of $\mu^{\rho_1,\rho_2}$ is given by
\begin{equation} \eta : \Z \to \{1,2,+\infty\},\quad \eta(z) = \begin{cases}
1 &\text{ if } d(z) = 1, \\ 2  &\text{ if } s(z) = 1, d(z) = 0, \\ +\infty &\text{ else.}
\end{cases} \end{equation}
The order in which we regard the points in $\mathcal{A}$ in the construction has no influence on the values of $\eta$~\cite{Ang06,EFM09}. In the following, we assume that the edges in the assignment of points in $\mathcal{A}$ to points in $\mathcal{S}$ do not intersect.

\begin{figure}[t!]
\centering
\includegraphics[scale=1]{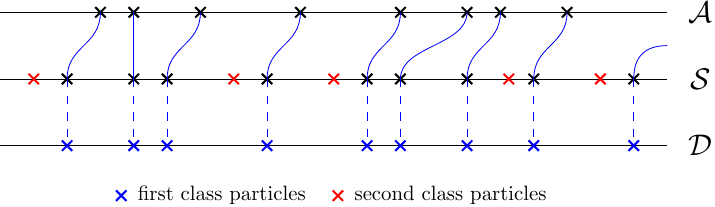}
\caption{Assigning the points in $\mathcal{A}$ to points in $\mathcal{S}$, we construct the departure process $\mathcal{D}$, that represents the positions of first class particles. The unused points in $\mathcal{S}$ are interpreted as second class particles.}
\label{figure_queueing_representation_stationary_measure}
\end{figure}

The marginals of $\mu^{\rho_1,\rho_2}$ for first and for (first + second) class particles are Bernoulli measures with densities $\rho_1$ and $\rho_1 + \rho_2$, and, as such, stationary for the single-species TASEP.

\subsection{Independence induced by second class particles}

An important property of the stationary measure $\mu^{\rho_1,\rho_2}$ is that the values of its realisations to the right and to the left of a given second class particle are independent of each other. This has been proven in~\cite{DJLS93,FFK94,Spe94}, see also~\cite{Ang06} for a simple explanation in the queueing representation and~\cite{FM07} for a generalisation to $n$-TASEP.
\begin{lem} \label{lemma_stationary_measure_independence_second_class_p}
	Let $\eta$ be a sample of $\mu^{\rho_1,\rho_2}$ and fix $z \in \Z$.
	Then, conditioned on $\eta (z) = 2$, the configurations $(\eta(i), i < z)$ and $(\eta(i), i > z)$ are independent.
\end{lem}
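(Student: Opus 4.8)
The plan is to prove the claimed conditional independence directly from the queueing construction of $\mu^{\rho_1,\rho_2}$ recalled in Section~\ref{section_queueing_representation}, exploiting the non-crossing convention for the edges between $\mathcal{A}$ and $\mathcal{S}$. First I would reformulate the event $\{\eta(z)=2\}$ in terms of the arrival and service processes: by the definition of $\eta$, this event equals $\{s(z)=1,\,d(z)=0\}$, i.e.\ there is a service point at $z$ which is \emph{not} used as a departure point. Under the non-crossing assignment, each departure point is the $\leq$-matched partner of some arrival point, and an unmatched service point at $z$ has a clean interpretation: scanning from the left, the number of arrival points is never strictly larger than the number of available (as yet unmatched) service points, so an unmatched service point witnesses that the queue is empty at $z$ in the FIFO-type sense. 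The key structural fact I would isolate is that conditioned on $\{\eta(z)=2\}$, no edge of the matching crosses the site $z$: every arrival point to the right of $z$ is matched to a service point to the right of $z$, and every arrival point to the left of $z$ is matched to a service point to the left of $z$.

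The heart of the argument is this non-crossing / no-edge-straddling statement, and establishing it is the step I expect to be the main obstacle. I would argue it as follows. Since the edges do not intersect and each goes from an arrival at $i$ to a service at $j\le i$, an edge straddles $z$ precisely when an arrival at some $i>z$ is matched to a service at some $j\le z$ (an arrival at $i\le z$ can only match a service at $j\le i\le z$, so it cannot straddle to the right). If such a straddling edge existed, then in particular the service point at $z$ would either lie on it or be \enquote{covered} by it; using the maximality rule (each arrival at $i$ draws to the maximal free $j\le i$) together with non-crossing, one shows that the service point at $z$ would have been claimed by some arrival to the right of $z$ before remaining free, contradicting $d(z)=0$. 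Concretely, the unmatched service point at $z$ acts as a barrier: any arrival strictly to the right that tried to reach across $z$ would, by maximality, have selected $z$ or a free service point in $(z,i]$ instead. Thus on $\{\eta(z)=2\}$ the matching splits into two independent matchings, one using only $(\mathcal{A},\mathcal{S})$ data strictly left of $z$ and one using only data strictly right of $z$ (the point at $z$ itself being a fixed second class particle).

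Once the splitting is in place, independence follows from the product structure of the underlying randomness. The processes $\mathcal{A}$ and $\mathcal{S}$ are independent Bernoulli product measures, so the restrictions $(a(i),s(i))_{i<z}$ and $(a(i),s(i))_{i>z}$ are independent of each other (and of the values at $z$). Since the configuration $(\eta(i))_{i<z}$ is, by the splitting, a deterministic function of $(a(i),s(i))_{i<z}$ alone and $(\eta(i))_{i>z}$ a deterministic function of $(a(i),s(i))_{i>z}$ alone, the two are conditionally independent given $\{\eta(z)=2\}$. I would make the conditioning rigorous by noting that $\{\eta(z)=2\}=\{s(z)=1,\,d(z)=0\}$ and that the \enquote{barrier} characterisation expresses $\{d(z)=0\}$, on $\{s(z)=1\}$, as an event measurable with respect to the left data together with $s(z)$; conditioning on this left-measurable event preserves the independence of the right data, which yields the product form. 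The only genuinely delicate point remains the combinatorial barrier lemma of the previous paragraph; everything downstream is a routine consequence of independence of product measures and measurability of $\eta$ with respect to the split data.
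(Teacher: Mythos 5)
The paper itself does not prove this lemma (it cites \cite{DJLS93,FFK94,Spe94} and attributes a ``simple explanation in the queueing representation'' to \cite{Ang06}), so your proposal is judged on its own merits; the route you choose is exactly that queueing-representation explanation. Your central combinatorial step is correct and correctly argued: if an arrival at $i>z$ were matched to a service at $j<z$ while the service at $z$ stays unused, then at the moment that arrival was processed the maximal free service in $(j,i]$ could not have been $j$, because $z\in(j,i)$ carries a service that is never claimed (claimed services stay claimed) --- contradiction. This barrier lemma, the resulting splitting of the matching on $\{\eta(z)=2\}$, and the appeal to the product structure of $(\mathcal{A},\mathcal{S})$ are the right ingredients.

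The genuine error is in your final measurability step, and it is directional. You assert that, on $\{s(z)=1\}$, the event $\{d(z)=0\}$ is measurable with respect to the \emph{left} data together with $s(z)$. This is false: in the construction of Section~\ref{section_queueing_representation} edges point leftward (an arrival at $i$ claims a service at some $j\le i$), so the service at $z$ can only ever be claimed by arrivals at sites $\ge z$, and the data to the left of $z$ is entirely irrelevant to whether it is used. Concretely, $\{\eta(z)=2\}=\{s(z)=1,\ a(z)=0,\ Q_{z+1}=0\}$, where $Q_{z+1}=\sup_{j\ge z}\{\mathcal{A}_{[z+1,j]}-\mathcal{S}_{[z+1,j]}\}$ depends only on $(a(i),s(i))_{i>z}$; your phrase ``scanning from the left'' reflects the same confusion, since the queue runs right to left. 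As written, ``conditioning on this left-measurable event preserves the independence of the right data'' conditions on an event that is not left-measurable, so the deduction does not go through. The repair is to swap the roles of the two sides: the configuration $(\eta(i))_{i>z}$ is \emph{always} a function of the right data; the conditioning event is $\{s(z)=1,a(z)=0\}$ intersected with the right-measurable event $\{Q_{z+1}=0\}$; and on this event the configuration $(\eta(i))_{i<z}$ is a function of the left data alone, because the queue restarts empty at $z$. This last point needs, besides the barrier lemma, the observation (which you should state explicitly) that $\eta(z)=2$ forces $a(z)=0$: otherwise the arrival at $z$ would, by maximality, claim the service at $z$ or find it already claimed, in either case giving $d(z)=1$. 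With these corrections your factorisation argument yields the claimed conditional independence.
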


In order to apply Lemma~\ref{lemma_stationary_measure_independence_second_class_p}, we require that in a sufficiently large time interval, we find a second class particle with high probability.

\begin{lem} \label{lemma_second_class_particle_in_interval_with_high_prob}
	Let $\eta$ be a sample of $\mu^{\rho_1,\rho_2}$. There are constants $C, c > 0$ such that for each interval $I \subseteq \Z$ with length $|I| = \ell$, it holds
	\begin{equation}
	\Pb (\exists i \in I: \ \eta(i) = 2) \geq 1 - C e^{-c \ell}.
	\end{equation}
\end{lem}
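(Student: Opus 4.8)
The plan is to prove the statement $\Pb(\exists i \in I: \eta(i) = 2) \geq 1 - Ce^{-c\ell}$ by exploiting the queueing representation of $\mu^{\rho_1,\rho_2}$ described in Section~\ref{section_queueing_representation}. Recall that second class particles are exactly the \emph{unused} points of the service process $\mathcal{S}$, i.e.\ those points of $\mathcal{S}$ that are not matched to any arrival point of $\mathcal{A}$ under the non-crossing assignment. Since $\mathcal{A}$ has rate $\rho_1$ and $\mathcal{S}$ has rate $\rho_1 + \rho_2$, the density of unused service points is exactly $\rho_2 > 0$, so second class particles are ``typically'' present at a positive rate; the content of the lemma is the exponential control on their \emph{absence} from a window of length $\ell$.

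The first step I would take is to reduce the event $\{\forall i \in I: \eta(i) \neq 2\}$ to a statement about the non-crossing matching. Writing $I = \{m+1, \dots, m+\ell\}$, the absence of a second class particle in $I$ means that every service point in $I$ is matched to some arrival point to its left (by the construction, each point of $\mathcal{S}$ at $j$ is connected to the maximal unused arrival site $\leq j$, so matched service points receive an arrival from the left). I would therefore seek to bound the event that all $\mathcal{S}_I$ service points in $I$ are saturated by arrivals. The key quantitative handle is the following: because the matching is non-crossing and arrivals only serve service points to their right (reading the rightward-jump convention of this paper), a deficit of second class particles in $I$ forces the arrival process to have dominated the service process on a long stretch ending in $I$. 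Concretely, I would look at the discrete ``queue-length'' or ``height'' process $Q(k) = \mathcal{S}_{(m,k]} - \mathcal{A}_{(m,k]}$ (the number of service points minus arrival points in an initial segment), which is a mean-$\rho_2$-drift random walk, and argue that no second class particle in $I$ corresponds to this walk being pinned at or below its running maximum throughout $I$ — an event on which the walk fails to increase over $\ell$ steps despite positive drift.

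The second, and I expect main, step is to turn this into a clean exponential bound. The cleanest route is a \emph{last-passage} or \emph{busy-period} argument: a site $j \in I$ is a second class particle iff the service point there is unmatched, which happens iff, counting from far enough to the left, the cumulative service count has strictly exceeded the cumulative arrival count at $j$ and this excess is ``fresh'' (not already spent). Since the increments are i.i.d.\ with positive mean $\rho_2$, the probability that the walk $Q$ makes \emph{no} new record over a block of length $\ell$ decays exponentially by a standard large-deviations estimate for random walks with positive drift (Cramér's theorem / Hoeffding, using that $\mathcal{A}, \mathcal{S}$ are Bernoulli and hence have bounded increments). The main obstacle is handling the dependence of the matching on arrival points arbitrarily far to the left: one must verify that conditioning on a long-range deficit is precisely the event controlled by the drift, and that no boundary effect from infinitely many arrivals spoils the independence of increments. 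I would address this by choosing an auxiliary truncation point $m - K\ell$ for suitable $K$, splitting $\Pb(\text{no 2nd class in } I)$ according to whether the queue empties somewhere in $(m-K\ell, m]$ (giving a fresh regeneration that localizes the matching) or not (an event of probability $\leq Ce^{-cK\ell}$ by the positive drift), and optimizing $K$.

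Finally, I would assemble the pieces: on the regeneration event the matching inside $I$ depends only on finitely many increments with positive drift, so the probability of saturating all of $I$'s service points is at most $Ce^{-c\ell}$ by the drift estimate, while the non-regeneration event is itself exponentially small; a union bound gives the claimed $1 - Ce^{-c\ell}$ with uniform constants $C, c > 0$ independent of the location and of $\ell$. The translation invariance of $\mu^{\rho_1,\rho_2}$ guarantees the bound depends only on $\ell = |I|$, as asserted.
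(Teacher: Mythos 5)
Your proposal is correct in substance, but it takes a genuinely different route from the paper. The paper first proves a separate lemma (Lemma~\ref{lemma_second_class_particle_exists}) that a second class particle exists \emph{somewhere} almost surely, then splits the event $\{\forall i \in I : \eta(i) \neq 2\}$ according to whether the nearest second class particle outside $I$ lies to the left or to the right of $I$, and bounds each case using the observation of~\cite{FFK94} that, given a second class particle at $x$, the next one to its left sits at the rightmost $n<x$ where the walk $Z_x(n)=\sum_{i=n}^{x-1}(s(i)-a(i))$ equals $1$; Hoeffding's inequality and a double sum over positions and gap sizes then give $Ce^{-c\ell}$, with the left case handled via the reflected queueing construction of Remark~\ref{remark_alternative_queueing_construction}. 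You instead characterise second class particles directly as the strict record points of the services-minus-arrivals walk (the ballot-problem description of unmatched service points), so that absence of second class particles in $I$ becomes the event that a walk with drift $\rho_2>0$ fails to exceed its prior running maximum across the whole window. That event is killed by exactly two standard estimates: Hoeffding (the walk rises by at least $\rho_2\ell/2$ across $I$, up to probability $Ce^{-c\ell}$) and a Lundberg/Cram\'er bound (the overshoot of the running maximum over the current value at the left edge of $I$ -- which encodes \emph{all} dependence on the configuration outside $I$ and is independent of the increments inside $I$ -- has an exponential tail; this is the same estimate from~\cite{Asm03} that the paper uses in Lemma~\ref{lemma_formula_for_IC_process_M}). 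Your route buys something: it needs no separate existence lemma (a positive-drift walk has infinitely many records almost surely, so existence comes for free), no left/right case split, and your truncation/regeneration step is in fact dispensable once one notes that the single overshoot variable already localises the dependence.

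One error you should fix: you have the matching direction backwards. In the paper's construction each point of $\mathcal{A}$ at $i$ is joined to the maximal \emph{unused} point of $\mathcal{S}$ at $j \leq i$, so a service point is matched to an arrival to its right (or at the same site), and a second class particle at $i$ is characterised by conditions on the configuration to its \emph{right}: $s(i)=1$, $a(i)=0$, and $\mathcal{A}_{[i+1,j]} \leq \mathcal{S}_{[i+1,j]}$ for all $j > i$. Your rule (each service grabs the maximal unused arrival to its left) is the non-crossing matching with the opposite constraint and therefore produces the \emph{reflection} of $\mu^{\rho_1,\rho_2}$, under which second class particles are indeed the left-to-right records you describe. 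For this particular lemma the slip is harmless, because the statement is reflection-invariant (it is uniform over intervals of a given length), but to prove the result for $\mu^{\rho_1,\rho_2}$ itself you should either invoke this reflection symmetry explicitly -- as the paper does for its left case -- or run your record argument reading the lattice from right to left, replacing the past running-maximum overshoot by the future dip $\sup_{j>m+\ell}\{\mathcal{A}_{[m+\ell+1,j]}-\mathcal{S}_{[m+\ell+1,j]}\}$, which plays the same role and has the same exponential tail.
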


It is likely that Lemma~\ref{lemma_second_class_particle_in_interval_with_high_prob} is known in the literature, but as we do not know where, we produce a proof of it in Appendix~\ref{appendix_B}.

\section{Asymptotic decoupling of height functions} \label{section_asymptotic_decoupling}

In this section, we prove our results on the asymptotic decoupling of the marginal height functions in two-species TASEPs with stationary or periodic initial conditions.

\subsection{Decoupling in a stationary two-species TASEP} \label{section_asymptotic_decoupling_stationary}

\begin{proof}[Proof of Theorem~\ref{thm_asymptotic_decoupling_height_functions_stationary}]
Without loss of generality, we set $w = z = 0$.
We choose $\nu \in (\tfrac{2}{3},1)$ and $\e \in (0,\nu-\tfrac{2}{3})$. For $\rho \in \{ \rho_1, \rho_1 + \rho_2\}$, \eqref{eq_concatenation_property_height_fct} implies
	\begin{equation} \begin{aligned}
	h^{\rho}((1-2\rho)t,t) = & h^{\rho}((1-2\rho)t^\nu,t^\nu) \\
	&+ \min_{x \in \Z} \{ h^\rho(x,t^\nu) - h^{\rho}((1-2\rho)t^\nu,t^\nu) + h^{\text{step}}_{x,t^\nu} ((1-2\rho)t,t) \}.
	\end{aligned}
	\end{equation}
	The occurring processes are coupled by basic coupling. We set
	\begin{equation}
	E_\rho = \{ |x(\tau) - (1-2\rho) \tau| \leq t^{2/3+\e} \text{ for all } \tau \in [0,t]\},
	\end{equation}
	where $x(\tau)$ is a backwards path with respect to $h^\rho$ starting at $x(t) = (1-2\rho)t$. Corollary~\ref{cor_localisation_backwards_geodesics_stationary_TASEP} yields $ \Pb(E_\rho) \geq 1 - C e^{-c t^{2\e}}$. Conditioned on $E_\rho$, we have
	\begin{equation} \begin{aligned}  \label{eq_pf_thm_decoupling_stationary_0}
	h^{\rho}((1-2\rho)t,t) = & h^{\rho}((1-2\rho)t^\nu,t^\nu) \\
	&+ \min_{x \in I_\rho} \{ h^\rho(x,t^\nu) - h^{\rho}((1-2\rho)t^\nu,t^\nu) + h^{\text{step}}_{x,t^\nu} ((1-2\rho)t,t) \}
	\end{aligned} \end{equation}
	for $I_\rho = \{(1-2\rho)t^\nu - t^{2/3+\e},\dots , (1-2\rho)t^\nu + t^{2/3+\e}\}$.
	
	Firstly, we argue that the values $(h^{\text{step}}_{x,t^\nu} ((1-2\rho)t,t), x \in I_\rho)$ are asymptotically independent for $\rho = \rho_1 $ and $\rho= \rho_1 + \rho_2$.
	
	For different $x$, the initial conditions of $h^{\text{step}}_{x,t^\nu}$ are ordered. By attractiveness of TASEP, see also Lemma 3.7 of~\cite{FN24}, we obtain the following comparison of the backwards paths: for $x \in I_\rho$, each path $x^{\text{step},x}(\tau)$ of $h^{\text{step}}_{x,t^\nu}$ starting at $(1-2\rho)t$ at time $t$ is enclosed by the leftmost backwards path of $h^{\text{step}}_{(1-2\rho)t^\nu - t^{2/3+\e},t^\nu}$ and the rightmost backwards path of $h^{\text{step}}_{(1-2\rho)t^\nu + t^{2/3+\e},t^\nu}$ starting at the same position. Thus, by the localisation of backwards paths in a TASEP with step initial condition~\cite{BF22,FN24}, the event
	\begin{equation}
	\begin{aligned}
	F_\rho = \{ |x^{\text{step},x} (\tau) - (1-2\rho) \tau | \leq 2 t^{2/3+\e} \text{ for all } \tau \in [t^\nu,t], x \in I_\rho\}
	\end{aligned}
	\end{equation}
	has probability $\Pb(F_\rho) \geq 1 - C e^{-c t^{2\e}}$. Conditioned on $F_{\rho_1} \cap F_{\rho_1+\rho_2}$, the respective backwards paths are contained in disjoint deterministic regions for large $t$. Then, $(h^{\text{step}}_{x,t^\nu} ((1-2\rho)t,t), x \in I_\rho)$ are independent for $\rho = \rho_1 $ and $\rho= \rho_1 + \rho_2$.
	
	Secondly, we argue that the differences $(h^\rho(x,t^\nu) - h^{\rho}((1-2\rho)t^\nu,t^\nu), x \in I_\rho)$ are asymptotically independent for $\rho=\rho_1$ and $\rho=\rho_1 + \rho_2$.
	
	By \eqref{eq_def_height_fct}, $(h^\rho(x,t^\nu) - h^{\rho}((1-2\rho)t^\nu,t^\nu), x \in I_\rho)$ is characterised by $(\eta^\rho_{t^\nu}(i), i \in I_\rho)$.
	We set
	\begin{equation}
	G = \{ \exists i \in \{(1-2(\rho_1+\rho_2)) t^\nu + t^{2/3+\e}+1, \dots, (1-2\rho_1)t^\nu - t^{2/3+\e} -1 \} : \ \eta_{t^\nu}(i) = 2\}.
	\end{equation}
	Then, Lemma~\ref{lemma_second_class_particle_in_interval_with_high_prob} implies $\Pb(G) \geq 1 - C e^ {-c t^{\nu}} $ for $t$ large enough. By Lemma~\ref{lemma_stationary_measure_independence_second_class_p}, conditioned on $G$, $(\eta_{t^\nu}(i), i \in I_\rho )$ are independent for $\rho=\rho_1$ and $\rho = \rho_1 + \rho_2$. This passes down to the marginals, such that $(h^\rho(x,t^\nu) - h^{\rho}((1-2\rho)t^\nu,t^\nu), x \in I_\rho)$ are asymptotically independent for $\rho=\rho_1$ and $\rho=\rho_1 + \rho_2$.
	
	Lastly, we observe that
	\begin{equation}  \label{eq_proof_thm_asympt_decoupl_stationary_0}
	\frac{h^{\rho}((1-2\rho)t^\nu,t^\nu) - (1-2\chi)t^\nu}{-2\chi^{2/3} t^{1/3}}
	\end{equation}
	weakly converges to zero since the height fluctuations are of order $\mathcal{O}(t^{\nu/3})$ with $\nu < 1$. We define $\hat{\mathfrak{h}}^\rho(0,t)$ as
	\begin{equation}
	\frac{\min_{x \in I_\rho} \{ h^\rho(x,t^\nu) - h^{\rho}((1-2\rho)t^\nu,t^\nu) + h^{\text{step}}_{x,t^\nu} ((1-2\rho)t,t) \} - (1-2\chi)(t-t^\nu)}{-2\chi^{2/3} t^{1/3}}.
	\end{equation}
	Then, this implies for any $\delta > 0$ and $A^\delta_\rho= \{ |\mathfrak{h}^\rho(0,t) - \hat{\mathfrak{h}}^\rho(0,t) | \leq \delta \} \cap E_\rho$ that
	$ \lim_{t \to \infty} \Pb(A^\delta_\rho) =1 $.
	
	Putting everything together, we deduce by conditioning on $G \cap A^\delta_\rho \cap F_\rho$ for $\rho=\rho_1$ and $\rho=\rho_1+\rho_2$ that
	\begin{equation}
	\begin{aligned}
	&\lim_{t \to  \infty} \Pb ( \mathfrak{h}^{\rho_1}(0,t) \leq s-2\delta) \Pb ( \mathfrak{h}^{\rho_1+\rho_2}(0,t) \leq r -2\delta) \\
	&\leq \lim_{t \to  \infty} \Pb ( \mathfrak{h}^{\rho_1}(0,t) \leq s, \mathfrak{h}^{\rho_1+\rho_2}(0,t) \leq r ) \\
	&\leq \lim_{t \to  \infty} \Pb ( \mathfrak{h}^{\rho_1}(0,t) \leq s+2\delta) \Pb ( \mathfrak{h}^{\rho_1+\rho_2}(0,t) \leq r +2\delta).
	\end{aligned}
	\end{equation}
	By \eqref{eq_conv_to_baik_rains_distr} and since the Baik-Rains distribution is continuous~\cite{BR00}, we can take $\delta \to 0$ in both bounds and obtain \eqref{eq_thm_asymptotic_decoupling_height_functions_stationary}.
\end{proof}

\subsection{Decoupling in a periodic two-species TASEP} \label{section_asymptotic_decoupling_periodic}

For the proof of Theorem~\ref{thm_asymptotic_decoupling_height_functions_flat}, we follow a similar line of reasoning as in the proof of Theorem~\ref{thm_asymptotic_decoupling_height_functions_stationary}. Previously, we established a decoupling of the differences of height function values at time $t^\nu$ in \eqref{eq_pf_thm_decoupling_stationary_0} by utilising properties of the stationary measure $\mu^{\rho_1,\rho_2}$. In the periodic setting, we use one-point estimates instead and replace the differences by values of the initial deterministic height profile.

\begin{proof}[Proof of Theorem~\ref{thm_asymptotic_decoupling_height_functions_flat}]
	We let $\rho \in \{\rho_1,\rho_1+\rho_2\}$ and choose $\nu \in (\tfrac{2}{3},1)$, $\e \in (0,\nu-\tfrac{2}{3})$. By Corollary~\ref{cor_localisation_geodesics_deterministic_IC_half_periodic}, for any backwards path starting at $x(t) = (1-2\rho)t$,
	\begin{equation}
	E_\rho = \{ |x(\tau) - (1-2\rho)\tau| \leq t^{2/3+\e} \text{ for all } \tau \in [0,t]\}
	\end{equation}
	fulfils $\Pb(E_\rho) \geq 1 - C e^{-c t^{2\e}}$. We set $I_\rho = \{ (1-2\rho)t^\nu - t^{2/3+\e}, \dots,  (1-2\rho)t^\nu + t^{2/3+\e}\}$. Under basic coupling and conditioned on $E_\rho$, we have
	\begin{equation} \label{eq_proof_thm_asympt_decoupl_flat_0} \begin{aligned}
	h^{\rho}((1-2\rho)t,t) = & h^{\rho}((1-2\rho)t^\nu,t^\nu) \\
	&+ \min_{x \in I_\rho} \{ h^\rho(x,t^\nu) - h^{\rho}((1-2\rho)t^\nu,t^\nu) + h^{\text{step}}_{x,t^\nu} ((1-2\rho)t,t) \}.
	\end{aligned} \end{equation}
	
	As in the proof of Theorem~\ref{thm_asymptotic_decoupling_height_functions_stationary}, there exist sets $F_\rho$ with $\Pb(F_\rho) \geq 1 - C e^{-c t^{2\e}}$ such that conditioned on $F_{\rho_1} \cap F_{\rho_1+\rho_2}$ and for $t$ large enough, $(h^{\text{step}}_{x,t^\nu}((1-2\rho)t,t), x \in I_\rho)$ are independent for $\rho=\rho_1$ and $\rho= \rho_1+\rho_2$.
	
	Next, we study $(h^\rho(x,t^\nu) - h^{\rho}((1-2\rho)t^\nu,t^\nu))$.
	For a fixed $x$, the conservation law implies $h^\rho(x,t^\nu) - h^\rho(x-(1-2\rho)t^\nu,0) \overset{(d)}{=} h^\rho((1-2\rho)t^\nu,t^\nu)$ up to a uniformly bounded shift, due to the fact that the initial condition is not completely translation-invariant. By one-point estimates\footnote{Since the right hand sides in \eqref{eq_proof_thm_asympt_decoupl_flat_1} grow faster than $t^{\nu/3}$, the required estimates can be derived by \eqref{eq_concatenation_property_height_fct} and the estimates from Proposition~A.9 of~\cite{BF22} for TASEP height functions with step initial condition. In the setting of particle positions, this has been done in the proof of Proposition~2.4 of~\cite{FN19}. Refer to the proof of Lemma~\ref{lemma_general_IC_translation} for random initial conditions.}, we find
	\begin{equation}  \label{eq_proof_thm_asympt_decoupl_flat_1}
	\begin{aligned}
	& \Pb ( \exists x \in I_\rho: \ |h^\rho(x,t^\nu) - h^\rho((1-2\rho)t^\nu,t^\nu) - h^\rho(x-(1-2\rho)t^\nu,0)| > \delta t^{1/3}) \\
	& \leq \sum\nolimits_{x \in I_\rho} \Pb ( |h^\rho(x,t^\nu) - h^\rho(x-(1-2\rho)t^\nu,0) - (1-2\chi)t^\nu| > \tfrac{\delta}{2} t^{1/3}) \\
	& \hphantom{\leq \sum\nolimits_{x \in I_\rho} } + \Pb( |h^\rho((1-2\rho)t^\nu,t^\nu) - (1-2\chi)t^\nu| > \tfrac{\delta}{2} t^{1/3}) \\
	& \leq C e^{-ct^{(1-\nu)/3}}
	\end{aligned}
	\end{equation}
	for all $t$ large enough and any fixed $\delta > 0$.
	We define $ \hat{\mathfrak{h}}^\rho(0,t)$ as
	\begin{equation}
	\frac{ \min_{x \in I_\rho} \{ h^\rho(x-(1-2\rho)t^\nu,0) + h^{\text{step}}_{x,t^\nu}((1-2\rho)t,t)\} - (1-2\chi)(t-t^\nu)}{-2\chi^{2/3} t^{1/3}}
	\end{equation}
	and $A^\delta_\rho= \{ |\mathfrak{h}^\rho(0,t) - \hat{\mathfrak{h}}^\rho(0,t)| \leq \delta \} \cap E_\rho$. By \eqref{eq_proof_thm_asympt_decoupl_flat_0}, \eqref{eq_proof_thm_asympt_decoupl_flat_1} and since the fluctuations of $h^{\rho}((1-2\rho)t^\nu,t^\nu)$ vanish under $\mathcal{O}(t^{1/3})$-scaling, we get $\lim_{t \to \infty} \Pb (A^\delta_\rho) = 1$. The remaining arguments are as in the proof of Theorem~\ref{thm_asymptotic_decoupling_height_functions_stationary}. Here, $(h^\rho(x-(1-2\rho)t^\nu,0))$ are deterministic. Further, also $F_{\text{GOE}}$ is a continuous distribution function~\cite{TW96}.
\end{proof}

\section{Two-point function} \label{section_mixed_correlations}

\subsection{Formula for the sum of mixed correlations} \label{section_formula_mixed_correlations}

Before showing the decay of mixed correlations for the stationary measure $\mu^{\rho_1,\rho_2}$ of the two-species TASEP, we prove the formula for the sum of the correlation functions, Proposition~\ref{prop_mixed_correlations_formula}. In doing so, we generalise the arguments from~\cite{PS01}.

\begin{proof}[Proof of Proposition~\ref{prop_mixed_correlations_formula}]

We denote $\eta=\eta^{\rho_1}$, $\tilde{\eta} = \eta^{\rho_1+\rho_2}$ and, accordingly, $\rho=\rho_1$, $\tilde{\rho} = \rho_1+\rho_2$. We set $j=x+i$, $\tilde{j}=\tilde{x}+i$ and define the height functions associated with the configurations $\eta_t$, $\tilde\eta_t$ as
$h(j,t)=h^{\rho_1}(j,t)$, $\tilde h(\tilde j,\tilde t)=h^{\rho_1+\rho_2}(\tilde j,\tilde t)$. Further, $N_t = N_t^{\rho_1}$, $\tilde{N}_{\tilde{t}} = N_{\tilde{t}}^{\rho_1+\rho_2}$ denote the total current of particles from site $0$ to site $1$ for $\eta_t$ or $\tilde\eta_t$, respectively.

It holds
\begin{equation}
\Delta \text{Cov}(h(j,t), \tilde{h}(\tilde j,\tilde{t})) = \Delta \E [h(j,t) \tilde{h}(\tilde j,\tilde{t})] - \Delta \E [h(j,t)] \E [ \tilde{h}(\tilde j,\tilde{t})],
\end{equation}
where
$\Delta f(i)=f(i+1)-2 f(i)+f(i-1)$ is the discrete Laplacian.

Setting $\chi = \rho(1-\rho)$ and $\tilde{\chi} = \tilde{\rho}(1-\tilde{\rho})$, we find
\begin{equation}
\begin{aligned}
\Delta  \E [h(j,t)] \E [ \tilde{h}(\tilde j,\tilde{t})]
= & (2 \chi t + (1-2\rho)(j+1))(2 \tilde{\chi} \tilde{t} + (1-2\tilde{\rho})(\tilde{j}+1))  \\
& - 2 (2 \chi t + (1-2\rho)j)(2 \tilde{\chi} \tilde{t} + (1-2\tilde{\rho})\tilde{j}) \\
& + (2 \chi t + (1-2\rho)(j-1))(2 \tilde{\chi} \tilde{t} + (1-2\tilde{\rho})(\tilde{j}-1)) \\
= & 2  (1-2\rho)(1-2\tilde{\rho}).
\end{aligned}
\end{equation}

Further, we have
\begin{equation} \begin{aligned}
&h(j+1,t)\tilde{h}(\tilde{j}+1,\tilde{t}) = (h(j,t) + (1-2\eta_t(j+1)))(\tilde{h}(\tilde{j},\tilde{t}) + (1-2\tilde{\eta}_{\tilde{t}}(\tilde{j}+1))), \\
&h(j-1,t)\tilde{h}(\tilde{j}-1,\tilde{t}) = (h(j,t) - (1-2\eta_t(j)))(\tilde{h}(\tilde{j},\tilde{t}) - (1-2\tilde{\eta}_{\tilde{t}}(\tilde{j}))).
\end{aligned} \end{equation}
This implies
\begin{equation} \label{eq_pf_cov_formula_mixed_correlations_1}
\begin{aligned}
 \Delta \E [h(j,t) \tilde{h}(\tilde{j},\tilde{t})] = &\E [ 2(\eta_t(j) - \eta_t(j+1)) \tilde{h}(\tilde{j},\tilde{t}) + 2(\tilde{\eta}_{\tilde{t}}(\tilde{j}) - \tilde{\eta}_{\tilde{t}}(\tilde{j}+1))h(j,t) \\
& +  (1-2\eta_t(j+1))(1-2\tilde{\eta}_{\tilde{t}}(\tilde{j}+1)) + (1-2\eta_t(j))(1-2\tilde{\eta}_{\tilde{t}}(\tilde{j}))] .
\end{aligned}
\end{equation}
We decompose the last term as
\begin{equation} \label{eq_pf_cov_formula_mixed_correlations_0}
\begin{aligned}
& 4 \E [(\eta_t(j) - \eta_t(j+1)) \tilde{N}_{\tilde{t}} + (\tilde{\eta}_{\tilde{t}}(\tilde{j}) - \tilde{\eta}_{\tilde{t}}(\tilde{j}+1)) N_t] \\
& + \E [ 2(\eta_t(j) - \eta_t(j+1)) ( \tilde{h}(\tilde{j},\tilde{t}) - 2 \tilde{N}_{\tilde{t}}) + 2(\tilde{\eta}_{\tilde{t}}(\tilde{j}) - \tilde{\eta}_{\tilde{t}}(\tilde{j}+1))(h(j,t) - 2 N_t) \\
&\hphantom{  +\E [} +  (1-2\eta_t(j+1))(1-2\tilde{\eta}_{\tilde{t}}(\tilde{j}+1)) + (1-2\eta_t(j))(1-2\tilde{\eta}_{\tilde{t}}(\tilde{j}))] .
\end{aligned}
\end{equation}
We denote by $N_t^-$ and $\tilde{N}_{\tilde{t}}^-$ the numbers of jumps from $-1$ to $0$ up to time $t$ or $\tilde{t}$.
By translation invariance, it holds $\eta_t(j+1)\tilde{N}_{\tilde{t}} \overset{(d)}{=} \eta_t(j) \tilde{N}_{\tilde{t}}^-$ and $\tilde{\eta}_{\tilde{t}}(\tilde{j}+1)N_t \overset{(d)}{=} \tilde{\eta}_{\tilde{t}}(\tilde{j})N_t^-$. Further, the conservation law yields
$N_t - N_t^- = \eta_0(0) - \eta_t(0)$ and $\tilde{N}_{\tilde{t}} - \tilde{N}_{\tilde{t}}^- = \tilde{\eta}_0(0) - \tilde{\eta}_{\tilde{t}}(0)$. We deduce
\begin{equation} \label{eq_pf_cov_formula_mixed_correlations_2}
\begin{aligned}
&  4 \E [(\eta_t(j) - \eta_t(j+1)) \tilde{N}_{\tilde{t}} + (\tilde{\eta}_{\tilde{t}}(\tilde{j}) - \tilde{\eta}_{\tilde{t}}(\tilde{j}+1)) N_t] \\
&= 4 \E [\eta_t(j)(\tilde{\eta}_0(0) - \tilde{\eta}_{\tilde{t}}(0)) + \tilde{\eta}_{\tilde{t}}(\tilde{j}) (\eta_0(0) - \eta_t(0))].
\end{aligned}
\end{equation}
Regarding the second expectation in \eqref{eq_pf_cov_formula_mixed_correlations_0}, we observe
\begin{equation}  \label{eq_pf_cov_formula_mixed_correlations_3}  \begin{aligned}
& 2 (\eta_t(j) - \eta_t(j+1))(\tilde{h}(\tilde{j},\tilde{t})-2\tilde{N}_{\tilde{t}}) - 2 \eta_t(j) (1-2\tilde{\eta}_{\tilde{t}}(\tilde{j})) \\
&= \begin{cases} 2 \eta_t(j) \sum_{i=1}^{\tilde{j}-1} (1-2\tilde{\eta}_{\tilde{t}}(i)) - 2 \eta_t(j+1) \sum_{i=1}^{\tilde{j}} (1-2\tilde{\eta}_{\tilde{t}}(i)), & \tilde{j} \geq 1, \\
- 2 \eta_t(j)(1-2\tilde{\eta}_{\tilde{t}}(0)), & \tilde{j} = 0, \\
- 2 \eta_t(j) \sum_{i=\tilde{j}}^0(1-2\tilde{\eta}_{\tilde{t}}(i)) + 2 \eta_t(j+1) \sum_{i=\tilde{j}+1}^0(1-2\tilde{\eta}_{\tilde{t}}(i)), & \tilde{j} \leq -1.
\end{cases}
\end{aligned}
\end{equation}
Translation invariance applied to the expression \eqref{eq_pf_cov_formula_mixed_correlations_3} implies
\begin{equation}
\E [2 (\eta_t(j) - \eta_t(j+1))(\tilde{h}(\tilde{j},\tilde{t})-2\tilde{N}_{\tilde{t}}) - 2 \eta_t(j) (1-2\tilde{\eta}_{\tilde{t}}(\tilde{j}))] = -2\E[\eta_t(j) (1-2\tilde{\eta}_{\tilde{t}}(0))]
\end{equation}
as the two sums are equal in law except for one remaining term. Similarly, we obtain
\begin{equation}
\E [ 2(\tilde{\eta}_{\tilde{t}}(\tilde{j}) - \tilde{\eta}_{\tilde{t}}(\tilde{j}+1))(h(j,t) - 2 N_t) - 2 \tilde{\eta}_{\tilde{t}}(\tilde{j}+1)(1-2\eta_t(j+1))] = -2 \E [\tilde{\eta}_{\tilde{t}}(\tilde{j}) (1-2\eta_t(0))].
\end{equation}
Therefore, the second summand in \eqref{eq_pf_cov_formula_mixed_correlations_0} equals
\begin{equation}
-2\E[\eta_t(j) (1-2\tilde{\eta}_{\tilde{t}}(0))] -2 \E [\tilde{\eta}_{\tilde{t}}(\tilde{j}) (1-2\eta_t(0))] +2-2\rho-2\tilde{\rho}
\end{equation}
and \eqref{eq_pf_cov_formula_mixed_correlations_1} and \eqref{eq_pf_cov_formula_mixed_correlations_2} yield
\begin{equation}
\begin{aligned}
 \Delta \E [h(j,t) \tilde{h}(\tilde{j},\tilde{t})]
&= 4 \E [ \eta_t(j)\tilde{\eta}_0(0) + \tilde{\eta}_{\tilde{t}}(\tilde{j}) \eta_0(0) ] + 2 - 4 \rho - 4 \tilde{\rho}.
\end{aligned}
\end{equation}
We conclude
\begin{equation} \begin{aligned}
\Delta \text{Cov}(h(j,t), \tilde{h}(\tilde{j},\tilde{t}))
= & 4 \E [ \eta_t(j)\tilde{\eta}_0(0) + \tilde{\eta}_{\tilde{t}}(\tilde{j}) \eta_0(0) ] - 8 \rho \tilde{\rho}.
\end{aligned}
\end{equation}
\end{proof}

\subsection{Decay of mixed correlations for TASEP} \label{section_decay_correlations}

In this section, we prove Theorem~\ref{thm_decay_corr} and Corollary~\ref{corollary_decay_corr}. We compared our methods to those in the proof of Corollary~2.16 of~\cite{ACH24} in Remark~\ref{remark_comparison_ACH24}.

Corollary~\ref{corollary_decay_corr} could be proven by similar means as Theorem~\ref{thm_decay_corr}. Nonetheless, the proofs are not completely analogous because by exchanging $\mathfrak{h}^{\rho_1+\rho_2}$ and $\mathfrak{h}^{\rho_1}$ in \eqref{eq_pf_prop_decay_corr_3}, we now consider the process $\delta_{\mathfrak{h}}(y)$ defined below for all $y \in \Z$. As explained in~\cite{ACH24}, this can be dealt with by localising the endpoint of a corresponding backwards path, which minimises the expression.

Instead of following this approach, we observe that Theorem~\ref{thm_decay_corr} already implies Corollary~\ref{corollary_decay_corr} by the particle-hole duality.

\begin{proof}[Proof of Corollary~\ref{corollary_decay_corr}]
We write
\begin{equation} \begin{aligned}
& S^\#_{1,2}((1-2\rho_1)t+w t^{2/3},t) \\
&= \langle (1-\eta_t^{0+\rho_2}((1-2\rho_1)t+w t^{2/3}))(1-\eta_0^0(0))\rangle - \rho_1(\rho_1+\rho_2) \\
&=  \langle \eta_t^{0+\rho_2}(-(1-2(1-\rho_1))t+w t^{2/3}) \eta_0^0(0) \rangle - (1-\rho_1)(1-\rho_1-\rho_2),
\end{aligned}
\end{equation}
where $\eta_t^{0+\rho_2}$ denotes the configuration of holes and second class particles and $\eta_t^0$ is the configuration of holes. By the particle-hole duality, they are again marginals of a two-species TASEP, now with inverted colours and leftward jumps: we exchange the roles of holes and first class particles, while second class particles stay the same. Since this process has the same generator as the former two-species TASEP with rightward jumps, the (unique translation-invariant) stationary measure remains the same. Further, the stationary measure of a coloured TASEP with jumps to the left is the reflection of the stationary measure of a coloured TASEP with the same densities and rightward jumps, see Theorem~4.1 of~\cite{BSS23}. Therefore, the term above equals
\begin{equation}
S^\#_{2,1}((1-2(1-\rho_1))t-w t^{2/3},t)
\end{equation}
with density $1-\rho_1$ of all particles and density $1-\rho_1-\rho_2$ of first class particles. By this means, we can use Theorem~\ref{thm_decay_corr} to obtain the claimed convergence.
\end{proof}

\begin{remark} \label{remark_alternative_queueing_construction}
The arguments in the proof of Corollary~\ref{corollary_decay_corr} suggest an alternative construction of the stationary measure $\mu^{\rho_1,\rho_2}$, see also~\cite{Ang06}. Reflecting the queueing construction from Section~\ref{section_queueing_representation}, we consider Bernoulli processes $\hat{\mathcal{A}}$ with rate $1-\rho_1-\rho_2$ and $\hat{\mathcal{S}}$ with rate $1-\rho_1$. Due to the reversed jump direction, we assign each point in $\hat{\mathcal{A}}$ to the nearest unused point to its right in $\hat{\mathcal{S}}$. By setting $\hat{\mathcal{A}} = 1-\mathcal{S}$ and $\hat{\mathcal{S}} = 1-\mathcal{A}$, where $\mathcal{A}$ and $\mathcal{S}$ are from Section~\ref{section_queueing_representation}, we obtain the following interpretation: we fix the particle positions in $\mathcal{A}$ as first class particles, and draw an edge from each hole in $\mathcal{S}$ to the next available hole in $\mathcal{A}$ at a position to its right. The remaining empty sites in $\mathcal{A}$ are interpreted as second class particles.

This second perspective is particularly useful for balancing the asymmetric nature of the queueing construction.
\end{remark}

The remainder of the section focuses on the proof of Theorem~\ref{thm_decay_corr}. For this purpose, we require an asymptotic independence statement for the height profiles $h^{\rho_1}(\cdot,0)$ and $h^{\rho_1+\rho_2}(\cdot,0)$. Similar results exist for the stationary horizon, see Lemma~E.6 of~\cite{ACH24}.

\begin{lem} \label{lemma_formula_for_IC_process_M}
Let $\hat{h}^{\rho_1}(x,0)$ be the initial height profile of a stationary TASEP with density $\rho_1$, constructed from the process $\mathcal{A}$ in Section~\ref{section_queueing_representation}. We rescale
\begin{equation}
\begin{aligned}
\mathfrak{h}^{\rho_1}(x) &=  t^{-1/3} (h^{\rho_1}(x,0)-(1-2\rho_1)x), \\
\mathfrak{h}^{\rho_1+\rho_2}(x) &=  t^{-1/3}(h^{\rho_1+\rho_2}(x,0)-(1-2(\rho_1+\rho_2))x), \\
\hat{\mathfrak{h}}^{\rho_1}(x) &=  t^{-1/3}(\hat{h}^{\rho_1}(x,0)-(1-2\rho_1)x).
\end{aligned}
\end{equation}
Then, $\mathfrak{h}^{\rho_1+\rho_2}$ is independent of $\hat{\mathfrak{h}}^{\rho_1}$. There exists a process $\delta_{\mathfrak{h}}$ such that
\begin{equation}
\mathfrak{h}^{\rho_1}(x) = \hat{\mathfrak{h}}^{\rho_1}(x)+\delta_{\mathfrak{h}}(x)
\end{equation}
and there exist constants $C,c > 0$ such that for all $R,K > 0$ and $t$ large enough, it holds
\begin{equation}
\Pb \bigg( \sup_{|x| \leq R t^{2/3}} |\delta_{\mathfrak{h}}(x)| > K t^{-1/3} \bigg) \leq C R t^{2/3} e^{-c K}.
\end{equation}
\end{lem}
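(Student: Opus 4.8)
The plan is to read off both assertions directly from the queueing construction of Section~\ref{section_queueing_representation}. Recall that $\mathcal{A}$ and $\mathcal{S}$ are independent Bernoulli processes with rates $\rho_1$ and $\rho_1+\rho_2$, that the all-particle configuration coincides with $\mathcal{S}$, and that the first class particles sit at the matched service points, forming the departure process $\mathcal{D}\subseteq\mathcal{S}$. The profile $\hat{h}^{\rho_1}$ is built solely from $\mathcal{A}$, whereas $h^{\rho_1+\rho_2}$ is built solely from $\mathcal{S}$; since $\mathcal{A}$ and $\mathcal{S}$ are independent, so are $\hat{\mathfrak{h}}^{\rho_1}$ and $\mathfrak{h}^{\rho_1+\rho_2}$, which settles the independence claim.

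For the tail bound, first note that the linear terms cancel, so $\delta_{\mathfrak{h}}(x)=t^{-1/3}\,(h^{\rho_1}(x,0)-\hat{h}^{\rho_1}(x,0))$, and both height functions vanish at $x=0$. By \eqref{eq_def_height_fct} at time $0$ one has, for $x\ge 1$,
\[
h^{\rho_1}(x,0)-\hat{h}^{\rho_1}(x,0)=2\big(\mathcal{A}_{[1,x]}-\mathcal{D}_{[1,x]}\big),
\]
and an analogous identity with the interval $[x+1,0]$ for $x\le -1$. Hence it suffices to prove that $\Pb(|\mathcal{A}_{[1,x]}-\mathcal{D}_{[1,x]}|>K)\le C e^{-cK}$ uniformly in $x$ (the factor $2$ is absorbed into $c$), and then take a union bound over the at most $2Rt^{2/3}+1$ integers $|x|\le Rt^{2/3}$, which produces exactly the prefactor $CRt^{2/3}$.

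To control the difference of counting functions I would use that the assignment of $\mathcal{A}$ to $\mathcal{S}$ is the non-crossing matching, i.e.\ the parenthesis matching obtained by reading $\mathcal{S}$-points as opening and $\mathcal{A}$-points as closing brackets. Counting the matched pairs with both endpoints in $[1,x]$ cancels in $\mathcal{A}_{[1,x]}-\mathcal{D}_{[1,x]}$, leaving $\mathcal{A}_{[1,x]}-\mathcal{D}_{[1,x]}=a_{\mathrm{out}}-d_{R}$, where $a_{\mathrm{out}}$ is the number of arrivals in $[1,x]$ matched to a service point at a site $\le 0$, and $d_{R}$ the number of departures in $[1,x]$ matched to an arrival at a site $>x$. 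By the standard overflow formula for parenthesis matchings, $a_{\mathrm{out}}=\max_{0\le y\le x}(\mathcal{A}_{[1,y]}-\mathcal{S}_{[1,y]})$, while an application of the same formula to the interval $(x,\infty)$ gives $d_{R}\le \max_{y>x}(\mathcal{A}_{(x,y]}-\mathcal{S}_{(x,y]})$.

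Each of these is the running maximum of a random walk whose increments $a(i)-s(i)$ are independent with negative mean $\rho_1-(\rho_1+\rho_2)=-\rho_2<0$. By translation invariance both maxima are stochastically dominated by the all-time maximum $\sup_{y\ge 0}(\mathcal{A}_{[1,y]}-\mathcal{S}_{[1,y]})$, whose law does not depend on $x$; for a negative-drift random walk this all-time maximum is almost surely finite and has exponential tails. Splitting $\{|a_{\mathrm{out}}-d_{R}|>K\}\subseteq\{a_{\mathrm{out}}>K/2\}\cup\{d_{R}>K/2\}$ then yields the desired uniform bound. The main point requiring care is the combinatorial identification of $a_{\mathrm{out}}$ and $d_{R}$ with these overflow maxima, i.e.\ that the non-crossing matching faithfully realises the parenthesis matching and that exactly the across-boundary arcs survive the cancellation; once this is in place, the tail estimate is a routine negative-drift random walk computation, and the remaining steps (union bound, the $x\le -1$ case via the same argument, and the rescaling) are immediate from translation invariance.
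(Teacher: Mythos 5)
Your proposal is correct and takes essentially the same approach as the paper: your overflow quantities $a_{\mathrm{out}}$ and $d_R$ correspond to (and are dominated by) the queue lengths $Q_1$ and $Q_{x+1}$ appearing in the paper's identity $\mathcal{D}_{[i,j]}=Q_{j+1}-Q_i+\mathcal{A}_{[i,j]}$ from Lemma~\ref{lemma_formula_for_IC_process_sup}, so both arguments reduce $\delta_{\mathfrak{h}}(x)$ to suprema of the negative-drift random walk $\mathcal{A}-\mathcal{S}$, apply Lundberg-type exponential tail bounds, and finish with a union bound over the $\mathcal{O}(Rt^{2/3})$ sites. The only difference is presentational: the paper obtains the key identity by induction on the interval length, whereas you re-derive the same content via the non-crossing bracket-matching combinatorics.
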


\begin{proof}
The independence of $\mathfrak{h}^{\rho_1+\rho_2}$ and $\hat{\mathfrak{h}}^{\rho_1}$ is due to the construction of the stationary measure in Section~\ref{section_queueing_representation}. Lemma~\ref{lemma_formula_for_IC_process_sup} below yields
\begin{equation}
\mathfrak{h}^{\rho_1}(x) = \hat{\mathfrak{h}}^{\rho_1}(x) + \delta_{\mathfrak{h}}(x)
\end{equation}
for
\begin{equation} \label{eq_pf_lemma_formula_for_IC_process_M-1}
\begin{aligned}
\delta_{\mathfrak{h}}(x) \coloneqq & - \sup_{j \geq x} \{ \mathfrak{h}^{\rho_1+\rho_2}(j)-\mathfrak{h}^{\rho_1+\rho_2}(x)-(\hat{\mathfrak{h}}^{\rho_1}(j)-\hat{\mathfrak{h}}^{\rho_1}(x))-2\rho_2(j-x)t^{-1/3} \} \\
& + \sup_{j \geq 0} \{ \mathfrak{h}^{\rho_1+\rho_2}(j)-\hat{\mathfrak{h}}^{\rho_1}(j)-2\rho_2 j t^{-1/3} \}.
\end{aligned}
\end{equation}
We bound the upper tail of the first supremum. It holds
\begin{equation}
\begin{aligned}
& \sup_{j \geq x} \{ \mathfrak{h}^{\rho_1+\rho_2}(j)-\mathfrak{h}^{\rho_1+\rho_2}(x)-(\hat{\mathfrak{h}}^{\rho_1}(j)-\hat{\mathfrak{h}}^{\rho_1}(x))-2\rho_2(j-x)t^{-1/3} \} \\
& = \sup_{j \geq x} \left \{ 2 t^{-1/3} \sum\nolimits_{i=x+1}^j(Z_i-Y_i) \right \},
\end{aligned}
\end{equation}
where $Z_i = a(i) \sim \text{Ber}(\rho_1)$ and $Y_i = s(i) \sim \text{Ber}(\rho_1+\rho_2)$ are independent. As this is the total supremum of a random walk with negative drift $\E [Z_i - Y_i ] = -\rho_2 < 0$, we obtain by Lundberg's inequality, see Section~5 in Chapter~XIII of~\cite{Asm03}:
\begin{equation} \label{eq_pf_lemma_formula_for_IC_process_M}
\Pb \left( \sup_{j \geq x} \left \{ \sum\nolimits_{i=x+1}^j(Z_i-Y_i) \right \} \geq K\right) \leq C e^{-c \theta K},
\end{equation}
where $\theta >0$ solves $\E[e^{\theta(Z_1-Y_1)}] =1$. Explicitly, we obtain
\begin{equation}
\theta = \text{ln}\left( \frac{\rho_1(1-\rho_1-\rho_2)+\rho_2}{\rho_1(1-\rho_1-\rho_2)}  \right) > 0.
\end{equation}
In particular, the bound does not depend on $x$ and~\eqref{eq_pf_lemma_formula_for_IC_process_M} implies
\begin{equation}
\begin{aligned}
&\Pb\left(\sup_{j \geq x} \{ \mathfrak{h}^{\rho_1+\rho_2}(j)-\mathfrak{h}^{\rho_1+\rho_2}(x)-(\hat{\mathfrak{h}}^{\rho_1}(j)-\hat{\mathfrak{h}}^{\rho_1}(x))-2\rho_2(j-x)t^{-1/3} \} > K t^{-1/3} \right) \\
& \leq C e^{-cK}.
\end{aligned}
\end{equation}
Since both suprema in~\eqref{eq_pf_lemma_formula_for_IC_process_M-1} are nonnegative, this yields the pointwise estimate
\begin{equation}  \label{eq_pf_lemma_formula_for_IC_process_M_0}
\Pb(|\delta_{\mathfrak{h}}(x)| > K t^{-1/3}) \leq C e^{-cK}.
\end{equation}
As the process is defined on the integers, we deduce
\begin{equation}
\Pb \bigg( \sup_{|x| \leq R t^{2/3}} |\delta_{\mathfrak{h}}(x)| > K t^{-1/3} \bigg)\leq C R t^{2/3} e^{-cK}.
\end{equation}
\end{proof}

\begin{lem} \label{lemma_formula_for_IC_process_sup} Let $\hat{h}^{\rho_1}$ be the height profile of a stationary TASEP with density $\rho_1$, constructed from the process $\mathcal{A}$ in Section~\ref{section_queueing_representation}. Then, for any $x \in \Z$, it holds
\begin{equation} \label{eq_lemma_formula_for_IC_process_sup}
\begin{aligned}
h^{\rho_1}(x,0) = \  & \hat{h}^{\rho_1}(x,0) + \sup_{j \geq 0} \{h^{\rho_1+\rho_2}(j,0)-\hat{h}^{\rho_1}(j,0)\} \\ & - \sup_{j \geq x} \{h^{\rho_1+\rho_2}(j,0)-h^{\rho_1+\rho_2}(x,0)-(\hat{h}^{\rho_1}(j,0)-\hat{h}^{\rho_1}(x,0))\}.
\end{aligned}
\end{equation}
\end{lem}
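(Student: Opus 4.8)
The plan is to strip the height functions down to the counting processes underlying the queueing construction of Section~\ref{section_queueing_representation}, reduce the asserted identity to a combinatorial statement about the non-crossing matching, and then prove the latter by telescoping a pointwise criterion. First I would use that $h^\rho(0,0)=0$ for every marginal, so that the increment description \eqref{eq_def_height_fct} gives $\hat h^{\rho_1}(j,0)=j-2\mathcal{A}_{[1,j]}$, $h^{\rho_1+\rho_2}(j,0)=j-2\mathcal{S}_{[1,j]}$ and $h^{\rho_1}(j,0)=j-2\mathcal{D}_{[1,j]}$, where for $j<0$ the counts are read with the opposite sign. Writing $W(j)=\mathcal{A}_{[1,j]}-\mathcal{S}_{[1,j]}$ (a two-sided walk with $W(0)=0$ and negative drift $-\rho_2$) and $R(m)=\sup_{k\ge m}W(k)$ (a.s.\ finite and attained because of the negative drift), one has $h^{\rho_1+\rho_2}(j,0)-\hat h^{\rho_1}(j,0)=2W(j)$, hence $\sup_{j\ge m}\{h^{\rho_1+\rho_2}(j,0)-\hat h^{\rho_1}(j,0)\}=2R(m)$. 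Substituting these into the claimed formula and cancelling the common $\mathcal{A}_{[1,x]}$ term, the statement becomes equivalent to
\begin{equation}
\mathcal{S}_{[1,x]}-\mathcal{D}_{[1,x]}=R(0)-R(x),
\end{equation}
that is, the (signed) number of second class particles in $[1,x]$ must equal $R(0)-R(x)$.

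Next I would establish a pointwise criterion for a site to carry a second class particle. In the matching of Section~\ref{section_queueing_representation} a point of $\mathcal{S}$ acts as an opening bracket and a point of $\mathcal{A}$ as a closing bracket, each closing bracket being paired with the most recent still-unmatched opening bracket to its left, and the unused points of $\mathcal{S}$ are exactly the second class particles. Since $\rho_1<\rho_1+\rho_2$, the reservoir of openings to the left never depletes and every point of $\mathcal{A}$ is matched, so the number $h_k$ of currently unmatched openings after site $k$ is nonnegative and satisfies $h_k-h_m=\mathcal{S}_{[m+1,k]}-\mathcal{A}_{[m+1,k]}$ as long as no underflow occurs in between. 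Tracking the opening at a site $j$ with $s(j)=1$, $a(j)=0$ through this stack, it survives forever (is second class) if and only if the stack never returns below its level, which by the increment identity reads $\mathcal{S}_{[j,k]}-\mathcal{A}_{[j,k]}\ge 1$ for all $k\ge j$, equivalently $W(k)<W(j-1)$ for all $k\ge j$. As this forces $W(j)=W(j-1)-1$, the condition automatically encodes $s(j)=1$, $a(j)=0$, and it can be rewritten as $R(j-1)>R(j)$.

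Finally I would sum the criterion over $[1,x]$. The map $m\mapsto R(m)$ is non-increasing and integer-valued, and the estimate $W(j-1)-R(j)\le W(j-1)-W(j)\le 1$ shows that each strict decrease of $R$ is by exactly one; hence the indicator that site $j$ carries a second class particle equals $R(j-1)-R(j)$, and
\begin{equation}
\mathcal{S}_{[1,x]}-\mathcal{D}_{[1,x]}=\sum_{j=1}^{x}\big(R(j-1)-R(j)\big)=R(0)-R(x)
\end{equation}
by telescoping, the same chain remaining valid for $x<0$ under the sign convention. This is precisely the identity isolated above, which proves the lemma. The delicate point, which I would treat carefully, is the forward stack analysis of the second step: one must fix the order in which an opening and a closing bracket at the same site are processed (opening before closing, so that an arrival may use the service point at its own location) and justify that no underflow occurs to the right of $j$ while the tagged opening remains unmatched, so that the net-count formula for $h_k$ applies. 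The reduction in the first step and the telescoping in the third are then routine.
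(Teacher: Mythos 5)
Your proof is correct, and at its core it rests on the same object as the paper's: your running maximum $R(m)=\sup_{k\ge m}W(k)$ is exactly the paper's queue length up to recentring, since $Q_{m+1}=\sup_{j\ge m}\{\mathcal{A}_{[m+1,j]}-\mathcal{S}_{[m+1,j]}\}=R(m)-W(m)$, and your target identity $\mathcal{S}_{[1,x]}-\mathcal{D}_{[1,x]}=R(0)-R(x)$ is a rearrangement of the departure identity $\mathcal{D}_{[i,j]}=Q_{j+1}-Q_i+\mathcal{A}_{[i,j]}$ on which the paper's proof rests. The genuine difference is how that combinatorial identity gets established. The paper proves it by induction on the length of $[i,j]\cap\Z$, using the queue-length recursion, and then simply substitutes into the height functions; you instead prove its differentiated, pointwise form — site $j$ carries a second class particle if and only if $R(j-1)>R(j)$, equivalently the walk $W$ never returns to the level $W(j-1)$ on $[j,\infty)$ — via a LIFO/bracket analysis of the non-crossing matching, and then telescope. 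What your route buys is a clean standalone characterisation of second class particle positions as the strict descent sites of $R$ (each descent being by exactly $1$), a fact the paper's induction leaves implicit; what it costs is the extra care you yourself flag about processing order and underflow. On that point, one presentational wrinkle: in the two-sided stationary setting the number $h_k$ of unmatched openings in $(-\infty,k]$ is a.s.\ infinite (the service density exceeds the arrival density), so the net-count formula $h_k-h_m=\mathcal{S}_{[m+1,k]}-\mathcal{A}_{[m+1,k]}$ is ill-defined as literally stated. Your actual argument, however, only tracks the finitely many openings pushed strictly after the tagged one at $j$, whose count is $\mathcal{S}_{(j,k]}-\mathcal{A}_{(j,k]}$ for as long as $j$ remains unmatched (by the non-crossing property no arrival to the right of $j$ can reach below $j$ while $j$ is in the stack), and this is all the survival criterion needs — so this is a wording issue, not a gap.
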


The proof of Lemma~\ref{lemma_formula_for_IC_process_sup} is similar to that of Lemma~4.4 of~\cite{BSS23}. However, the outcome differs slightly because, unlike in~\cite{BSS23}, we reflected the construction of the stationary measure from~\cite{FM07} in Section~\ref{section_queueing_representation} to obtain a process with jumps to the right instead of to the left. In~\cite{BSS23}, the authors first worked with a process with leftward jumps and performed the reflection afterwards.

A structure similar to \eqref{eq_lemma_formula_for_IC_process_sup} was identified for the stationary horizon, see~\cite{ACH24,Bus24,SS21}.

\begin{proof}[Proof of Lemma~\ref{lemma_formula_for_IC_process_sup}]
Suppose $x \geq 1$; the proof for $x \leq -1$ is analogous. We recall the construction of $\mu^{\rho_1,\rho_2}$ from Section~\ref{section_queueing_representation}. Setting $\mathcal{A}_{[i,i-1]} = 0$ and $\mathcal{S}_{[i,i-1]} = 0$, we denote the queue length at site $i \in \Z$ by
\begin{equation} \label{eq_5.27}
Q_i = \max \left \{ \sup_{j \geq i} \{ \mathcal{A}_{[i,j]}-\mathcal{S}_{[i,j]} \},0 \right \} = \sup_{j \geq i-1} \{ \mathcal{A}_{[i,j]}-\mathcal{S}_{[i,j]} \}.
\end{equation}
Induction on the length of the interval $[i,j] \cap \Z$ yields
\begin{equation}
\mathcal{D}_{[i,j]} = Q_{j+1}-Q_i+\mathcal{A}_{[i,j]}.
\end{equation}
Thus, it holds
\begin{equation} \label{eq_pf_lemma_formula_for_IC_process_sup}
h^{\rho_1}(x,0) = x - 2 \mathcal{D}_{[1,x]} = x - 2 \mathcal{A}_{[1,x]} - 2 Q_{x+1}+2Q_1.
\end{equation}
Together with $\hat{h}^{\rho_1}(j,0)-\hat{h}^{\rho_1}(i,0) = j-i-2 \mathcal{A}_{[i+1,j]} $ and $h^{\rho_1+\rho_2}(j,0)-h^{\rho_1+\rho_2}(i,0) = j-i-2\mathcal{S}_{[i+1,j]}$, \eqref{eq_pf_lemma_formula_for_IC_process_sup} implies \eqref{eq_lemma_formula_for_IC_process_sup}.
\end{proof}

We are ready to prove Theorem~\ref{thm_decay_corr}.

\begin{proof}[Proof of Theorem~\ref{thm_decay_corr}]
Suppose $\text{supp}(\phi) \subset [-L,L]$. Proposition~\ref{prop_mixed_correlations_formula} implies
\begin{equation}
\begin{aligned}
& S^\#_{2,1}((1-2(\rho_1+\rho_2))t+w t^{2/3},t) \\
&= \frac{1}{4} \Delta \text{Cov}(h^{\rho_1+\rho_2}((1-2(\rho_1+\rho_2))t+w t^{2/3},t), h^{\rho_1}(2Lt^{2/3}+w t^{2/3},0)) \\ & \hphantom{=}  - S^\#_{1,2}(2Lt^{2/3}+w t^{2/3},0),
\end{aligned}
\end{equation}
where the discrete Laplacian $\Delta$ is applied with respect to $w t^{2/3}$. We define
\begin{equation}
A \coloneqq \{ \exists \text{ second class particle in } \eta_0 \text{ at a site in }  \{1, \dots, Lt^{2/3}-1\} \}
\end{equation}
and observe
\begin{equation}
\begin{aligned}
| S^\#_{1,2}(2Lt^{2/3}+w t^{2/3},0) |
&\leq | \text{Cov}(\eta_0^{\rho_1}(2Lt^{2/3}+w t^{2/3}),\eta_0^{\rho_1+\rho_2}(0) | A )| + C \Pb(A^c)
\end{aligned}
\end{equation}
for all $w \in \text{supp}(\phi)$ and some constant $C > 0$.
By Lemma~\ref{lemma_stationary_measure_independence_second_class_p}, the covariance conditioned on $A$ equals $0$ because the values of the configuration $\eta_0$ are independent of each other, given a second class particle in between the sites under consideration. By Lemma~\ref{lemma_second_class_particle_in_interval_with_high_prob}, we derive
\begin{equation}
| S^\#_{1,2}(2Lt^{2/3}+w t^{2/3},0) | \leq C e^{-cLt^{2/3}}
\end{equation}
for all $w \in \text{supp}(\phi)$. In particular, it holds
\begin{equation}
\lim_{t \to \infty} \bigg| t^{-2/3} \sum_{w \in t^{-2/3} \Z} \phi(w) t^{2/3} S^\#_{1,2}(2Lt^{2/3}+w t^{2/3},0) \bigg| \leq \lim_{t \to \infty} C L \|\phi\|_{\infty} t^{2/3} e^{-cLt^{2/3}} = 0.
\end{equation}
Therefore, Theorem~\ref{thm_decay_corr} follows if we show that
\begin{equation} \label{eq_pf_prop_decay_corr_0}
t^{-2/3} \sum_{w \in t^{-2/3} \Z} \phi(w) \frac{t^{2/3}}{4} \Delta \text{Cov}(h^{\rho_1+\rho_2}((1-2(\rho_1+\rho_2))t+w t^{2/3},t), h^{\rho_1}(2Lt^{2/3}+w t^{2/3},0))
\end{equation}
converges to $0$ as $t \to \infty$.
Applying summation by parts twice, we rewrite \eqref{eq_pf_prop_decay_corr_0} as
\begin{equation} \label{eq_pf_prop_decay_corr_1}
t^{-2/3} \sum_{ w \in t^{-2/3} \Z} \frac{t^{4/3}}{4} \Delta \phi(w) \text{Cov} (\mathsf{h}^{\rho_1+\rho_2}(w,t), \mathsf{h}_L^{\rho_1}(w,0))
\end{equation}
with
\begin{equation}
\begin{aligned}
 \mathsf{h}^{\rho_1+\rho_2}(w,t) = \ & t^{-1/3}(h^{\rho_1+\rho_2}((1-2(\rho_1+\rho_2))t+w t^{2/3},t)  \\ &- (1-2(\rho_1+\rho_2)(1-\rho_1-\rho_2))t - (1-2(\rho_1+\rho_2))w t^{2/3}), \\
\mathsf{h}^{\rho_1}_L(w,0) = \ & t^{-1/3}(h^{\rho_1}(2Lt^{2/3}+w t^{2/3},0)-(1-2\rho_1)(2Lt^{2/3}+w t^{2/3})).
\end{aligned}
\end{equation}
Then,
$\lim_{t \to \infty} \tfrac{t^{4/3}}{4} \Delta \phi(w) = \tfrac{1}{4} \phi^{''}(w)$
implies
\begin{equation}
\limsup_{t \to \infty} |\eqref{eq_pf_prop_decay_corr_1}| \leq \frac{1}{4} \int_{[-L,L]}  |\phi^{''}(w) | d w \times  \limsup_{t \to \infty} \sup_{w \in [-L,L]} | \text{Cov} (\mathsf{h}^{\rho_1+\rho_2}(w,t), \mathsf{h}_L^{\rho_1}(w,0))|.
\end{equation}
As the integral is bounded by $2L \| \phi^{''} \|_{\infty}$, it suffices to show that as $t \to \infty$, $|\text{Cov} (\mathsf{h}^{\rho_1+\rho_2}(w,t), \mathsf{h}_L^{\rho_1}(w,0))|$ converges to zero uniformly in $w \in t^{-2/3} \Z \cap [-L,L]$.
By \eqref{eq_concatenation_property_height_fct}, the covariance equals
\begin{equation} \label{eq_pf_prop_decay_corr_3}
\begin{aligned}
\text{Cov}\left(\min_{y \in \Z} \{ \mathfrak{h}^{\rho_1+\rho_2}(y)+\mathfrak{h}^{\text{step}}_y(w,t) \}, \mathfrak{h}^{\rho_1}(2Lt^{2/3}+w t^{2/3})\right)
\end{aligned}
\end{equation}
with $\mathfrak{h}^{\rho_1+\rho_2},\mathfrak{h}^{\rho_1}$ defined in Lemma~\ref{lemma_formula_for_IC_process_M} and with
\begin{equation}
\begin{aligned}
\mathfrak{h}^{\text{step}}_y(w,t) = \ & t^{-1/3}(h_y^{\text{step}}((1-2(\rho_1+\rho_2))t+w t^{2/3},t) \\ &- (1-2(\rho_1+\rho_2)(1-\rho_1-\rho_2))t - (1-2(\rho_1+\rho_2))(w t^{2/3}-y)),
\end{aligned}
\end{equation}
where $h^{\text{step}}_y$ is the height function of a TASEP with step initial condition shifted by $y$.
Lemma~\ref{lemma_formula_for_IC_process_M} yields
\begin{equation}
\mathfrak{h}^{\rho_1}(2Lt^{2/3}+w t^{2/3}) = \hat{\mathfrak{h}}^{\rho_1}(2Lt^{2/3}+w t^{2/3}) + \delta_{\mathfrak{h}}(2Lt^{2/3}+w t^{2/3})
\end{equation}
and $\hat{\mathfrak{h}}^{\rho_1}$ is independent of $\mathfrak{h}^{\rho_1+\rho_2}$ and $\mathfrak{h}_y^{\text{step}}$, meaning
\begin{equation}
\eqref{eq_pf_prop_decay_corr_3} = \text{Cov}\left(\min_{y \in \Z} \{ \mathfrak{h}^{\rho_1+\rho_2}(y)+\mathfrak{h}^{\text{step}}_y(w,t) \}, \delta_{\mathfrak{h}}(2Lt^{2/3}+w t^{2/3})\right) .
\end{equation}
In particular, the Cauchy-Schwarz inequality implies
\begin{equation}
| \eqref{eq_pf_prop_decay_corr_3}| \leq (\E[\mathsf{h}^{\rho_1+\rho_2}(w,t)^2] \times \E [ \delta_{\mathfrak{h}}(2Lt^{2/3}+w t^{2/3})^2])^{1/2}.
\end{equation}
Since $\mathsf{h}^{\rho_1+\rho_2}$ is a stationary single-species TASEP, it follows from tail estimates that
\begin{equation}
\limsup_{t \to \infty} \sup_{w \in [-L,L]}  \E[\mathsf{h}^{\rho_1+\rho_2}(w,t)^2] < \infty.
\end{equation}
Indeed, by Theorem~1 of~\cite{BFP12}, the limit exists and is given in terms of the supremum of the second moments of Baik-Rains distributions with parameters in a compact set.

As $\delta_{\mathfrak{h}}(x) = \mathfrak{h}^{\rho_1}(x)-\hat{\mathfrak{h}}^{\rho_1}(x)$, we have $|\delta_{\mathfrak{h}}(x)| \leq C t^{-1/3} |x|$ for some constant $C > 0$ and Lemma~\ref{lemma_formula_for_IC_process_M} yields
\begin{equation}  \label{eq_pf_prop_decay_corr_4}
\begin{aligned}
 \E [ \delta_{\mathfrak{h}}(2Lt^{2/3}+w t^{2/3})^2] \leq  & \ t^{-1/2} + C L^2 t^{2/3} \Pb \bigg( \sup_{|x| \leq 3 L t^{2/3}} |\delta_{\mathfrak{h}}(x)| > t^{-1/4} \bigg) \\ \leq & \ t^{-1/2}+CL^3t^{4/3} e^{-ct^{1/12}}
\end{aligned}
\end{equation}
uniformly for $w \in t^{-2/3} \Z \cap [-L,L]$. We conclude $\lim_{t \to \infty} \eqref{eq_pf_prop_decay_corr_0} =0$.
\end{proof}

\begin{remark} \label{remark_general_speeds}
The proof of Theorem~\ref{thm_decay_corr} also applies to $t^{2/3} S^\#_{2,1}(vt+w t^{2/3},t)$ for speeds $v \neq 1-2(\rho_1+\rho_2)$. The key modifications are as follows. In \eqref{eq_pf_prop_decay_corr_1}, we replace $ \mathsf{h}^{\rho_1+\rho_2}(w,t)$ by $\mathsf{h}_v^{\rho_1+\rho_2}(w,t)+\mathsf{h}_v^{\rho_1+\rho_2}(0)$, defined as
\begin{equation} \begin{aligned}
&\frac{h^{\rho}(vt+w t^{2/3},t)-h^\rho((v-(1-2\rho))t,0)-(1-2\rho(1-\rho))t-(1-2\rho)w t^{2/3}}{t^{1/3}} \\
&+\frac{h^{\rho}((v-(1-2\rho))t,0)-(1-2\rho)(v-(1-2\rho))t}{t^{1/3}}
 \end{aligned} \end{equation}
 with $\rho=\rho_1+\rho_2$. By Cauchy-Schwarz, translation invariance of $\mu^{\rho_1,\rho_2}$, and the conservation law, $|\text{Cov}(\mathsf{h}_v^{\rho_1+\rho_2}(w,t),\mathsf{h}^{\rho_1}_L(w,0))| = |\text{Cov}(\mathsf{h}_v^{\rho_1+\rho_2}(w,t),\delta_{\mathfrak{h}}(2Lt^{2/3}+w t^{2/3}))|$ satisfies the same bound as $|\eqref{eq_pf_prop_decay_corr_3}|$. Also $|\text{Cov}(\mathsf{h}_v^{\rho_1+\rho_2}(0),\mathsf{h}^{\rho_1}_L(w,0))|^2$ is bounded by the product of the second moments of $\mathsf{h}_v^{\rho_1+\rho_2}(0)$ and $\delta_{\mathfrak{h}}(2Lt^{2/3}+w t^{2/3})$. By \eqref{eq_pf_prop_decay_corr_4}, this yields the bound $Ct^{1/3}t^{-1/2}$. Thus, both covariances converge to zero uniformly for $w \in t^{-2/3} \Z \cap [-L,L]$.

The convergence of $t^{2/3} S^\#_{1,2}(vt+w t^{2/3},t)$ to zero for $v \neq 1-2\rho_1$ follows by the same arguments as Corollary~\ref{corollary_decay_corr}.
\end{remark}

\subsection{Decay of mixed correlations for ASEP} \label{section_decay_correlations_ASEP}

In this section, we prove Corollary~\ref{cor_decay_corr_ASEP} by extending the arguments from Section~\ref{section_decay_correlations} to the more general case of ASEP. To do so, we first review the queueing construction of the translation-invariant stationary measure $\mu^{\rho_1,\rho_2,q}$ for the two-species ASEP with asymmetry parameter $q \in (0,1)$ and with densities $\rho_1 \in (0,1)$ and \mbox{$\rho_2 \in (0,1-\rho_1)$} of first and second class particles by \cite{Mar20}.

\subsubsection{Queueing representation} \label{section_queueing_representation_ASEP}

The queueing construction introduced by \cite{Mar20} extends the representation in the totally asymmetric case developed in \cite{FM07}, which we discussed in Section~\ref{section_queueing_representation}. The key difference is that a positive queue length does \emph{not} necessarily imply that a service results in a departure.

We again start with independent Bernoulli processes $\mathcal{A} \sim \text{Ber}(\rho_1)$ and \mbox{$\mathcal{S} \sim \text{Ber}(\rho_1+\rho_2)$} on $\Z$, representing arrival and service lines. The sites in $\Z$ are interpreted as time indices progressing from right to left. We denote the queue length at site $i \in \Z$ by $Q_i$. In the totally asymmetric case, its relation to $\mathcal{A}$ and $\mathcal{S}$ is given by \eqref{eq_5.27}. For an asymmetry parameter $q > 0$, we additionally consider a family of independent geometric\footnote{We use the geometric distribution with support $\N$ and parameter $1-q$.} random variables $(b(i),i \in \Z)$, where $\Pb(b(i)\leq k )=1-q^k$. The queue length is defined recursively by
\begin{equation}
Q_i = Q_{i+1} + \Id_{\{a(i)=1,s(i)=0\}} - \Id_{\{a(i)=0,s(i)=1,b(i)\leq Q_{i+1}\}}.
\end{equation}
This can be interpreted as follows: suppose $Q_{i+1}=k$. If there is an arrival but no service at site $i$, the queue length increases by $1$. If both an arrival and a service occur, then there is a departure, so the queue length remains unchanged. Crucially, if there is a service but no arrival, then each customer in the queue rejects the service independently with probability $q$. Consequently, in this case, the service results in a departure with probability $1-q^k$ and remains unused with probability $q^k$. See also Figure~\ref{figure_queueing_representation_ASEP}.

\begin{figure}[t!]
\centering
\includegraphics[scale=1]{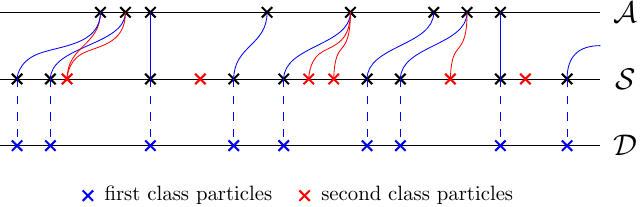}
\caption{Queueing construction of the measure $\mu^{\rho_1,\rho_2,q}$. The red lines correspond to rejections of services by customers in the queue.}
\label{figure_queueing_representation_ASEP}
\end{figure}

The departure process $\mathcal{D}$ satisfies
\begin{equation}
d(i) = \begin{cases}
1, & \text{if } a(i)=s(i)=1 \text{ or } a(i)=0,s(i)=1,b(i) \leq Q_{i+1}, \\ 0, & \text{otherwise},
\end{cases}
\end{equation}
and the resulting configuration $\eta$ is given by
\begin{equation} \label{eq_configuration_stationary_ASEP_2}
\eta(i) = \begin{cases}
1, & \text{if } a(i)=s(i)=1 \text{ or } a(i)=0,s(i)=1,b(i) \leq Q_{i+1}, \\
2, & \text{if } a(i)=0,s(i)=1 \text{ and } b(i) > Q_{i+1}, \\
+\infty, & \text{if } s(i)=0.
\end{cases}
\end{equation}
In \cite{Mar20}, it was observed that for $\rho_2 > 0$, the queue length $Q_i$ is positive recurrent and admits a unique stationary distribution. Assuming $Q_i$ is stationary, \cite{Mar20} proved that $\eta$ is distributed according to $\mu^{\rho_1,\rho_2,q}$.

\subsubsection{Properties of the stationary queue length process}
\label{section_properties_queue_length_ASEP}
We require properties of the stationary queue length $Q_i$ that relate to Lemma~\ref{lemma_second_class_particle_in_interval_with_high_prob} and Lemma~\ref{lemma_formula_for_IC_process_M}. In the totally asymmetric case, these properties were established through explicit computations. Due to the increased complexity of the queue length, we now rely on the more abstract framework of geometric ergodicity, elaborated for instance in \cite{MT93}.

The process $Q_i$ is irreducible, aperiodic, and positive recurrent.
A short computation, see Appendix~\ref{appendix_E}, shows that there exist a finite set $I$, constants $b < \infty, \beta > 0$, and $c>0$ such that $V(x)=e^{cx}$ satisfies
\begin{equation}
\E[V(Q_i)|Q_{i+1}=x]-V(x) \leq - \beta V(x) + b \Id_I(x).
\end{equation}
Theorem~14.3.7 of \cite{MT93} yields
\begin{equation}
\E[V(Q_i)] \leq C \E[\Id_I(Q_i)] \leq C
\end{equation}
for some constant $C > 0$. Having this, the Markov inequality, applied to $V(Q_i)$, implies exponential tail bounds for the queue length:
\begin{equation} \label{eq_tail_bound_queue_length}
\Pb(Q_i > n) = \Pb(V(Q_i) > V(n)) \leq \E[V(Q_i)] e^{-cn} \leq C e^{-cn},
\end{equation}
independently of $i$. Applying Theorem~15.2.6 of \cite{MT93} with $A=\{0\}$, we further deduce
\begin{equation}
\E[e^{c\tau_0} |Q_0=0] < \infty
\end{equation}
for some $c>0$, where $\tau_0 = \min\{n \geq 1: Q_{-n}=0\}$. Therefore, also the return time to $0$ satisfies the exponential bound
\begin{equation} \label{eq_bound_return_time}
\Pb(\tau_0 > t | Q_0=0) \leq C e^{-ct}.
\end{equation}
By translation invariance, the same holds for the return time to $0$ starting from any other site than the origin.

\subsubsection{Extending the proof of the decay of mixed correlations}
\label{section_extension_proof_ASEP}

Finally, we are able to extend the proof ideas from Section~\ref{section_decay_correlations} to the general asymmetric case.

Our first observation is that the proof of Corollary~\ref{corollary_decay_corr} transfers, as the particle-hole duality also applies to ASEP, and the stationary measure satisfies the same reflection property as in the totally asymmetric case (by similar arguments as in Remark~4.2 of \cite{BSS23}). Therefore, it suffices to consider the off-diagonal term $S^\#_{2,1}$ for arbitrary densities. This is crucial for ASEP because, unlike for TASEP, the proof idea for Theorem~\ref{thm_decay_corr} does not extend to $S^\#_{1,2}$ as explained in the beginning of Section~\ref{section_decay_correlations}; the concatenation property \eqref{eq_concatenation_property_height_fct} and the theory of backwards paths are not available.

Importantly, the starting formula in Proposition~\ref{prop_mixed_correlations_formula} holds for ASEP as well. Thus, to extend the proof of Theorem~\ref{thm_decay_corr} to ASEP, it suffices to show the following:
\begin{itemize}
		\item[(a)] It holds  $|S^\#_{1,2}(x,0)| = |\text{Cov}(\eta^{\rho_1}_0(x),\eta^{\rho_1+\rho_2}_0(0))| \leq C e^{-cx}$ for some constants $C,c>0$.
		\item[(b)] Lemma~\ref{lemma_formula_for_IC_process_M} applies to ASEP as well, when using the queueing construction from Section~\ref{section_queueing_representation_ASEP}.
		\item[(c)] In the large-time limit, the second moments of the rescaled stationary single-species ASEP height function are bounded uniformly for $w$ in a compact set.
\end{itemize}
We verify (a) and (b) using the properties of the queueing representation from Section~\ref{section_properties_queue_length_ASEP}. Known convergence and concentration results by \cite{Agg18,LS25} for the stationary single-species ASEP imply (c).

For general speeds, the arguments in Remark~\ref{remark_general_speeds} apply to ASEP as well.

\paragraph{(a)}
We can view the configurations $\eta_0^{\rho_1}$ and $\eta_0^{\rho_1+\rho_2}$ as marginals of the configuration $\eta \sim \mu^{\rho_1,\rho_2,q}$ constructed in Section~\ref{section_queueing_representation_ASEP}. The measure $\mu^{\rho_1,\rho_2,q}$ factorises similarly as $\mu^{\rho_1,\rho_2}$ in Lemma~\ref{lemma_stationary_measure_independence_second_class_p}: conditioned on $Q_z=0$ for some fixed site $z \in \Z$, the configuration values $(\eta(i),i<z)$ and $(\eta(i),i>z)$ are independent. This is a direct consequence of the queueing construction. By \eqref{eq_configuration_stationary_ASEP_2}, the values $(\eta(i),i<z)$ depend on $((a(i),s(i),b(i),Q_{i+1}), i < z)$, where $a(i),b(i),s(i)$ are independent for different sites and, given $Q_z=0$, $Q_{i+1}$ only depends on $a(i+1)$, $s(i+1)$ and $Q_{i+2}, \dots, Q_{z-1}$ for $i < z-1$.

Therefore, it suffices to show
\begin{equation}
\Pb(\exists z \in (0,x) \cap \Z: \ Q_z=0) \geq 1-Ce^{-cx}.
\end{equation}

As the queue length is positive recurrent, we have $Q_k=0$ for some $k \in \Z_{\geq x}$ with probability $1$. Thus, we confirm
\begin{equation}
\begin{aligned}
\Pb(\forall z \in (0,x) \cap \Z: Q_z>0) &= \Pb(\exists k \in \Z_{\geq x}: Q_k=0, \forall z \in (0,k-1] \cap \Z: Q_z>0) \\
& \leq \sum_{k=x}^\infty \Pb(\tau_0 \geq k | Q_0=0) \leq C e^{-cx},
\end{aligned}
\end{equation}
where we applied translation invariance and the bound \eqref{eq_bound_return_time}.

\paragraph{(b)} In the setting of Lemma~\ref{lemma_formula_for_IC_process_M}, we obtain similarly as in the proof of Lemma~\ref{lemma_formula_for_IC_process_sup} that
\begin{equation}
\mathfrak{h}^{\rho_1}(x) = \hat{\mathfrak{h}}^{\rho_1}(x) + \delta_{\mathfrak{h}}(x),
\end{equation}
where $\delta_{\mathfrak{h}}(x) = 2 t^{-1/3}(Q_1-Q_{x+1})$.
But then, \eqref{eq_tail_bound_queue_length} implies
\begin{equation}
\Pb(|\delta_{\mathfrak{h}}(x)| > K t^{-1/3}) \leq C e^{-cK}
\end{equation}
and the result of Lemma~\ref{lemma_formula_for_IC_process_M} follows by a union bound.

\paragraph{(c)}
Theorem~2.4 of \cite{LS25} provides tail estimates for the stationary single-species ASEP. Combining them with the convergence to the Baik-Rains distribution established by \cite{Agg18}, \cite{LS25} deduce that, by similar arguments as in \cite{BFP12}, the two-point function of the stationary single-species ASEP converges weakly to the KPZ-universal scaling limit (see Section~\ref{section_main_results_ASEP}). Precisely, this follows from the convergence of second moments of $\mathfrak{h}^{\rho}(w,t)$ in \eqref{eq_rescaled_height_fct_ASEP} to those of the Baik-Rains distribution, uniformly for $w$ in a compact set. Thus, the second moments of the rescaled height functions are, in particular, uniformly bounded in the large-time limit.

\section{Decoupling of height functions for general random initial data} \label{section_decoupling_random_IC}

In this section, we prove Theorem~\ref{thm_asymptotic_decoupling_height_functions_general} by extending the arguments from Section~\ref{section_asymptotic_decoupling}. Afterwards, we provide a class of random initial conditions that satisfies the assumptions of Theorem~\ref{thm_asymptotic_decoupling_height_functions_general} and contains the stationary measure $\mu^{\rho_1,\rho_2}$.

\begin{proof}[Proof of Theorem~\ref{thm_asymptotic_decoupling_height_functions_general}] Without loss of generality, we set $w=z = 0$. We let \mbox{$\rho \in \{\rho_1,\rho_1+\rho_2\}$} and $\nu \in (\tfrac{2}{3}+\e,1)$ for $\e \in (0,\tfrac{1}{3})$ from Assumption~\ref{assumption_decoupling_general_IC}(c). Further, we define
\begin{equation}
E_\rho=\{ |x(\tau)-(1-2\rho)\tau| \leq t^{2/3+\e} \text{ for all } \tau \in [0,t]\},
\end{equation}
where $x(\tau)$ is a backwards path with respect to $h^\rho$ starting at $x(t) = (1-2\rho)t$. Lemma~\ref{lemma_general_IC_backwards_path} yields $\lim_{t \to \infty} \Pb(E_\rho) = 1$. Conditioned on $E_\rho$, we have
\begin{equation} \begin{aligned}  \label{eq_pf_thm_decoupling_general_0}
h^{\rho}((1-2\rho)t,t) = & h^{\rho}((1-2\rho)t^\nu,t^\nu) \\
&+ \min_{x \in I_\rho} \{ h^\rho(x,t^\nu) - h^{\rho}((1-2\rho)t^\nu,t^\nu) + h^{\text{step}}_{x,t^\nu} ((1-2\rho)t,t) \}
\end{aligned} \end{equation}
with $I_\rho = \{(1-2\rho)t^\nu - t^{2/3+\e},\dots , (1-2\rho)t^\nu + t^{2/3+\e}\}$.
We wish to replace the height differences at time $t^\nu$ in \eqref{eq_pf_thm_decoupling_general_0} and set
\begin{equation}
G_\rho = \bigg\{ \sup_{|y| \leq t^{2/3+\e}} | h^{\rho}(y,0) - (h^{\rho}((1-2\rho)t^{\nu} +y,t^\nu)-h^{\rho}((1-2\rho)t^\nu,t^\nu))| \leq t^\theta \bigg\}
\end{equation} for some $\theta \in (\tfrac{\nu}{3},\tfrac{1}{3})$. Lemma~\ref{lemma_general_IC_translation} verifies $\lim_{t \to \infty} \Pb(G_\rho) = 1$. Further, by Assumption~\ref{assumption_decoupling_general_IC}(c),
\begin{equation}
H_{\rho_1} = \bigg\{\sup_{|y| \leq t^{2/3+\e}}|h^{\rho_1}(y,0)-\hat{h}^{\rho_1}(y,0)| \leq t^\sigma \bigg\}
\end{equation}
fulfils $\lim_{t \to \infty} \Pb(H_{\rho_1}) = 1$, with $\hat{h}^{\rho_1}(\cdot,0)$, $h^{\rho_1+\rho_2}(\cdot,0)$ independent, and $\sigma \in (0,\tfrac{1}{3})$.

Next, for $\rho = \rho_1+\rho_2$, we define $\tilde{\mathfrak{h}}^{\rho_1+\rho_2}(0,t)$ as
\begin{equation}
\frac{\min_{x \in I_\rho} \{h^{\rho}(x-(1-2\rho)t^\nu,0) + h^{\text{step}}_{x,t^\nu}((1-2\rho)t,t) \}-(1-2\chi)(t-t^\nu)}{-2\chi^{2/3} t^{1/3}},
\end{equation}
while for $\rho=\rho_1$, we define $\hat{\mathfrak{h}}^{\rho_1}(0,t)$ as
\begin{equation}
\frac{\min_{x \in I_\rho} \{\hat{h}^{\rho}(x-(1-2\rho)t^\nu,0) + h^{\text{step}}_{x,t^\nu}((1-2\rho)t,t) \}-(1-2\chi)(t-t^\nu)}{-2\chi^{2/3} t^{1/3}}.
\end{equation}
Given our observations above, it follows by the same reasoning as in the proofs of Theorem~\ref{thm_asymptotic_decoupling_height_functions_stationary} and Theorem~\ref{thm_asymptotic_decoupling_height_functions_flat} that for any $\delta > 0$, there exists an event $A^\delta$ with $\lim_{t \to \infty} \Pb(A^\delta) = 1$ such that, conditioned on $A^\delta$, $|\mathfrak{h}^{\rho_1+\rho_2}(0,t)-\tilde{\mathfrak{h}}^{\rho_1+\rho_2}(0,t)| \leq \delta$, $|\mathfrak{h}^{\rho_1}(0,t)-\hat{\mathfrak{h}}^{\rho_1}(0,t)| \leq \delta$, and $\tilde{\mathfrak{h}}^{\rho_1+\rho_2}(0,t)$ and $\hat{\mathfrak{h}}^{\rho_1}(0,t)$ are independent for large $t$. Exploiting the continuity of the limit distributions, the remaining steps proceed as in the previous proofs.
\end{proof}

One class of initial conditions satisfying Assumption~\ref{assumption_decoupling_general_IC} can be constructed using the queueing procedure from Section~\ref{section_queueing_representation} with more general processes $\mathcal{A}$ and $\mathcal{S}$ in $\{0,1\}^{\Z}$. We assume:
\begin{itemize}
\item[(A1)] Independence: $\mathcal{A}$ and $\mathcal{S}$ are independent.
\item[(A2)] Spatial homogeneity: the distributions of $\mathcal{A}$ and $\mathcal{S}$ are translation-invariant.
\item[(A3)] Tail bounds: uniformly for $t$ large enough, there exist constants $C,c > 0$ such that
\begin{equation} \begin{aligned}
&\Pb(|\mathcal{A}_{I_x} - \rho_1 |x|t^{2/3}| > s t^{1/3}) \leq C e^{-c s |x|^{-1/2}},  \\ &\Pb(|\mathcal{S}_{I_x} - (\rho_1+\rho_2) |x|t^{2/3}| > s t^{1/3}) \leq C e^{-c s |x|^{-1/2}}
\end{aligned} \end{equation}
for all $x \neq 0$, $s > 0$. We set $I_x = [1,xt^{2/3}] \cap \Z$ for $x > 0$ and $I_x = [xt^{2/3}+1,0] \cap \Z$ for $x<0$.
\item[(A4)] Weak convergence: it holds \begin{equation}
\frac{\mathcal{A}_{I_x}-\rho_1|x|t^{2/3}}{t^{1/3}} \Rightarrow \sigma_A \mathcal{B}_A(x),  \quad \frac{\mathcal{S}_{I_x}-(\rho_1+\rho_2)|x|t^{2/3}}{t^{1/3}} \Rightarrow \sigma_S \mathcal{B}_S(x),
\end{equation}
where $\sigma_A,\sigma_S > 0$, and $\mathcal{B}_A$, $\mathcal{B}_S$ are standard two-sided Brownian motions. The weak convergence is with respect to the uniform topology on compact sets.
\end{itemize}

As before, it suffices to have (A2) up to uniformly bounded perturbations. We construct the departure process $\mathcal{D}$ as in Section~\ref{section_queueing_representation}, and the initial configuration of the two-species TASEP is again characterised by
\begin{equation}
\eta(z) = \begin{cases}
1 & \text{ if } d(z) = 1, \\ 2  &\text{ if } s(z) = 1, d(z) = 0, \\ +\infty &\text{ else.}
\end{cases}
\end{equation}
The height profiles $h^{\rho_1+\rho_2}(\cdot,0)$ and $h^{\rho_1}(\cdot,0)$ are defined by $\mathcal{S}$ and $\mathcal{D}$ respectively. We denote by $\hat{h}^{\rho_1}(\cdot,0)$ the height profile with respect to $\mathcal{A}$. It holds $h^{\rho_1+\rho_2}(xt^{2/3},0) = xt^{2/3}-2\text{sgn}(x)\mathcal{S}_{I_x}$, and the same for ($h^{\rho_1},\mathcal{D}$) and ($\hat{h}^{\rho_1},\mathcal{A}$). Below, we verify that this initial condition satisfies Assumption~\ref{assumption_decoupling_general_IC}.

Assumption~\ref{assumption_decoupling_general_IC}(a) is ensured by (A2), as the queueing procedure preserves translation invariance. The tail bound for $h^{\rho_1+\rho_2}(xt^{2/3},0)$ in Assumption~\ref{assumption_decoupling_general_IC}(b) is equivalent to the bound for $\mathcal{S}_{I_x}$ in (A3). To obtain the bound for $h^{\rho_1}(xt^{2/3},0)$, we derive the corresponding result for $\mathcal{D}_{I_x}$.

We recall from the proof of Lemma~\ref{lemma_formula_for_IC_process_sup} that for any integers $i \leq j$, it holds $\mathcal{D}_{[i,j]} = Q_{j+1}-Q_i+\mathcal{A}_{[i,j]}$, where $Q_i$ denotes the queue length at site $i$. By (A2) and (A3), we have
\begin{equation} \label{eq_bound_on_queue_length}
\begin{aligned}
\Pb(Q_i > s t^{1/3}) = \ & \Pb\Bigl(\max \Bigl\{\sup_{j \geq i} \{\mathcal{A}_{[i,j]}-\mathcal{S}_{[i,j]}\},0\Bigr\} > s t^{1/3}\Bigr) \\
\leq & \sum\nolimits_{j \geq i+st^{1/3}} (\Pb( \mathcal{A}_{[i,j]} > (\rho_1+\tfrac{1}{2}\rho_2)(j-i+1)) \\
& \hphantom{\sum\nolimits_{j \geq i+st^{1/3}}} + \Pb( \mathcal{S}_{[i,j]} < (\rho_1+\tfrac{1}{2}\rho_2)(j-i+1))) \\
\leq & C e^{-c \sqrt{s}t^{1/6}}.
\end{aligned}
\end{equation}
Together with the bound on $\mathcal{A}_{I_x}$ in (A3), this implies
\begin{equation}
\begin{aligned}
&\Pb(|\mathcal{D}_{I_x}-\rho_1|x|t^{2/3}| > s t^{1/3}) \leq C e^{-cs|x|^{-1/2}}+Ce^{-c\sqrt{s}t^{1/6}}.
\end{aligned}
\end{equation}
It holds $C e^{-c \sqrt{s}t^{1/6}} \leq Ce^{-cs|x|^{-1/2}}$ for $s \leq  |x|t^{1/3}$ and the probability above equals $0$ for $s > \max\{1-\rho_1,\rho_1 \}|x|t^{1/3}$. This yields $\Pb(|\mathcal{D}_{I_x}-\rho_1|x|t^{2/3}| > s t^{1/3}) \leq  C e^{-cs|x|^{-1/2}}$, and the bound for $h^{\rho_1}(xt^{2/3},0)$ in Assumption~\ref{assumption_decoupling_general_IC}(b).

Assumption~\ref{assumption_decoupling_general_IC}(c) is satisfied because $\mathcal{D}$ almost equals $\mathcal{A}$, which is independent of $\mathcal{S}$. We have $|h^{\rho_1}(xt^{2/3},0) - \hat{h}^{\rho_1}(xt^{2/3},0)| = 2|\mathcal{D}_{I_x}-\mathcal{A}_{I_x}|$, and for any $\e,\sigma \in (0,\tfrac{1}{3})$, it holds
\begin{equation}
\begin{aligned}
&\Pb\Bigl(\sup_{|x| \leq t^\e} |\mathcal{D}_{I_x}-\mathcal{A}_{I_x}| > t^\sigma \Bigr)  \leq Ct^\e \Pb(Q_0 > \tfrac{1}{2} t^\sigma) \leq C t^\e e^{-c t^{\sigma/2}},
\end{aligned}
\end{equation}
which converges to zero as $t \to \infty$. \\

We have shown that the initial condition $\eta$ satisfies Assumption~\ref{assumption_decoupling_general_IC}.
By \eqref{eq_bound_on_queue_length}, $t^{-1/3}Q_{x t^{2/3}}$ converges to zero in probability, uniformly for $x$ in a compact set. Consequently, the weak convergence of $\mathcal{A}$ passes down to $\mathcal{D}$: it holds
\begin{equation}
\frac{\mathcal{D}_{I_x}-\rho_1|x|t^{2/3}}{t^{1/3}} \Rightarrow \sigma_A \mathcal{B}_A(x)
\end{equation}
with respect to the uniform topology on compact sets.
Thus, the limiting distributions of $\mathfrak{h}^{\rho_1}(w,t)$ and $\mathfrak{h}^{\rho_1+\rho_2}(w,t)$ belong to the family
\begin{equation}
F^{(\sigma)}_w(s) = \Pb \Bigl( \max_{u \in \R} \{ \sqrt{2} \sigma \mathcal{B}(u)+\mathcal{A}_2(u)-(u-w)^2\} \leq s \Bigr),
\end{equation}
where $\sigma > 0$, $\mathcal{B}$ denotes a standard two-sided Brownian motion and $\mathcal{A}_2$ is an Airy$_2$ process independent of $\mathcal{B}$; see (2.9) of~\cite{CFS16}.

Simple examples of processes satisfying (A2)--(A4) include the Bernoulli processes from Section~\ref{section_queueing_representation}, mixtures of deterministic and Bernoulli data\footnote{For instance, we can assign independent $\text{Ber}(\rho_1)$-variables to the even sites, and place points at the odd sites either deterministically with another density $\tilde{\rho_1}$ or independently according to $\text{Ber}(\tilde{\rho}_1)$-distributions.}, or, more generally, subdivisions of $\Z$ into sets of a fixed length $n$, with the placement of points in the different sets being independent and identically distributed.

Also the initial condition from the Monte Carlo simulations in~\cite{CFS16} is admissible. It generalises to arbitrary densities as follows. For $\rho \in (0,1)$ and $\alpha \in (0,1)$, let $\{ a(i), i \in \Z\}$ be a stationary Markov chain with transition matrix
\begin{equation}
\begin{pmatrix}
1-2\rho(1-\alpha) & 2\rho(1-\alpha) \\ 2(1-\rho)(1-\alpha) & 1-2(1-\rho)(1-\alpha)
\end{pmatrix}.
\end{equation}
The stationary one-point distribution is given by $\Pb(a(i)=0) =1-\rho$ and \mbox{$\Pb(a(i)=1)=\rho$}, and $a(i)$ represents the occupation variable of site $i \in \Z$ in $\mathcal{A}$. The Markov chain is ergodic and reversible.

 Then, $\mathcal{A}$ satisfies (A2)--(A4) with $\rho_1=\rho$ and $\sigma_A = \sqrt{\rho(1-\rho)} \sqrt{ \tfrac{\alpha}{1-\alpha}}$. The tail bounds can, for example, be obtained from Theorem~1.1 of~\cite{Lez98}. The weak convergence is established in Lemma~2.5 of~\cite{CFS16} for the special case $\rho=\tfrac{1}{2}$ and follows similarly, for instance using Corollary~1.5 of~\cite{KV86}, for arbitrary $\rho$.

\appendix
\section{Endpoints of backwards paths in a TASEP with (half-)periodic initial condition} \label{appendix_A}

The proof of Proposition~\ref{prop_geodesic_end_point_deterministic_IC_half_periodic} requires similar arguments like those of Proposition 4.7 and Proposition 4.8 of~\cite{BF22} as well as of Theorem 4.3 of~\cite{FG26}.

\begin{proof}[Proof of Proposition~\ref{prop_geodesic_end_point_deterministic_IC_half_periodic}]
We suppose $x(t) = \alpha t$ with $ \alpha \in [1-2\rho,1-2\lambda] \cap (-1,1)$. By \eqref{eq_concatenation_property_height_fct} and \eqref{eq_geodesic_property},
\begin{equation}
\inf_{y \leq M t^{2/3}} \{ h(y,0) + h^{\text{step}}_{y,0}(\alpha t,t)\} < \inf_{y > M t^{2/3}} \{ h(y,0) + h^{\text{step}}_{y,0}(\alpha t,t)\}
\end{equation}
implies $x(0) \leq M t^{2/3}$. Thus, it holds
\begin{equation}
\begin{aligned}
\Pb(x(0) \leq M t^{2/3}) \geq 1 &- \Pb \Bigl ( \inf_{y \leq M t^{2/3}} \{ h(y,0) + h^{\text{step}}_{y,0}(\alpha t,t)\} \geq A \Bigr ) \\ &- \Pb \Bigl ( \inf_{y > M t^{2/3}} \{ h(y,0) + h^{\text{step}}_{y,0}(\alpha t,t)\} \leq A \Bigr ),
\end{aligned}
\end{equation}
where we set $A = \tfrac{1}{2}(1+\alpha^2)t + \phi M^2 t^{1/3}$ with $\phi > 0$ constant.

We can neglect the bounded term $H(y)$ in the initial condition and find
\begin{equation}
\begin{aligned}
 \Pb \Bigl ( \inf_{y \leq M t^{2/3}} \{ h(y,0) + h^{\text{step}}_{y,0}(\alpha t,t)\} \geq A \Bigr )  &\leq \Pb ( h^{\text{step}}_{0,0}(\alpha t,t) \geq \tfrac{1}{2}(1+\alpha^2)t + \phi M^2 t^{1/3}) \\
&\leq C e^{-cM^2}
\end{aligned}
\end{equation}
by the one-point estimates from Proposition~A.9 of~\cite{BF22}.

The second probability is bounded from above by
\begin{equation} \label{eq_proof_prop_endpoint_det_IC_1}
\Pb \Bigl ( \inf_{y > M t^{2/3}} \{ \alpha y + h^{\text{step}}_{y,0}(\alpha t,t)\} \leq \tfrac{1}{2}(1+\alpha^2)t + \phi M^2 t^{1/3} \Bigr ).
\end{equation}
To bound \eqref{eq_proof_prop_endpoint_det_IC_1}, we split the domain of $y$ up at $M t^{2/3+\delta}$ with $ \delta > 0$ such that $M t^{2/3 + \delta} = o(t)$. Colour-position symmetry yields\footnote{This follows by similar arguments as Lemma~5.1 of~\cite{Ger24}.} $(h^{\text{step}}_{y,0}(\alpha t,t))_{y \in \Z} \overset{(d)}{=} (h^{\text{step}}(\alpha t-y,t))_{y \in \Z}$.
We first consider
\begin{equation} \label{eq_proof_prop_endpoint_det_IC_2}
\Pb \Bigl ( \inf_{y > M t^{2/3+\delta}} \{ \alpha y + h^{\text{step}}(\alpha t-y,t)\} \leq \tfrac{1}{2}(1+\alpha^2)t + \phi M^2 t^{1/3} \Bigr ).
\end{equation}
For $\eta > \tfrac{1+\alpha}{2}$ and $y \geq \eta t$, it holds
\begin{equation}
\alpha y + h^{\text{step}}(\alpha t-y,t) \geq \alpha y + |\alpha t - y | \geq \tfrac{1}{2}(1+\alpha^2)t + \mu t
\end{equation}
for some $\mu > 0$. This means that the inequality in \eqref{eq_proof_prop_endpoint_det_IC_2} cannot hold for $y \geq \eta t$.
For $y < \eta t$ and $\eta < 1+\alpha$, we find $\alpha - y t^{-1} \in [\alpha-\eta, \alpha] \subseteq (-1,1) $. Thus, the one-point estimates from Proposition~A.9 of~\cite{BF22} apply with uniform constants for all such $y$ and we obtain
\begin{equation}
\begin{aligned}
\eqref{eq_proof_prop_endpoint_det_IC_2} \leq &\sum_{M t^{2/3+\delta} < y < \eta t} \Pb (\alpha y + h^{\text{step}}(\alpha t-y,t)\leq \tfrac{1}{2}(1+\alpha^2)t + \phi M^2 t^{1/3}  ) \\
\leq &\sum_{M t^{2/3+\delta} < y < \eta t} C e^{-c y^2 t^{-4/3}}
\leq  C e^{-cM^2 t^{2\delta}}.
\end{aligned}
\end{equation}

Next, we bound
\begin{equation} \label{eq_proof_prop_endpoint_det_IC_3}
\Pb \Bigl ( \inf_{M t^{2/3} < y \leq M t^{2/3+\delta}} \{ \alpha y + h^{\text{step}}(\alpha t-y,t)\} \leq \tfrac{1}{2}(1+\alpha^2)t + \phi M^2 t^{1/3} \Bigr )
\end{equation}
by a comparison to a stationary TASEP. For $I_\ell = (\ell M t^{2/3}, (\ell+1) M t^{2/3}]$, it holds
\begin{equation} \label{eq_proof_prop_endpoint_det_IC_4}
\begin{aligned}
\eqref{eq_proof_prop_endpoint_det_IC_3} \leq \sum_{\ell = 1 }^{t^\delta} \Pb \Bigl ( \inf_{y \in I_\ell} \{ \alpha y + h^{\text{step}}(\alpha t-y,t)\} \leq \tfrac{1}{2}(1+\alpha^2)t + \phi M^2 t^{1/3} \Bigr )
\end{aligned}
\end{equation}
and the summands in \eqref{eq_proof_prop_endpoint_det_IC_4} are bounded by
\begin{align}
&\Pb ( h^{\text{step}}(\alpha t - \ell M t^{2/3},t) + \alpha \ell M t^{2/3} \leq \tfrac{1}{2}(1+\alpha^2)t + \phi M^2 t^{1/3}  + \psi \ell^2 M^2 t^{1/3} )  \label{eq_proof_prop_endpoint_det_IC_5} \\
&+ \Pb \Bigl ( \inf_{y \in I_\ell} \{ h^{\text{step}}(\alpha t - y,t) - h^{\text{step}}(\alpha t - \ell M t^{2/3},t) + \alpha(y-\ell M t^{2/3})  \} \leq - \psi \ell^2 M^2 t^{1/3} \Bigr). \label{eq_proof_prop_endpoint_det_IC_6}
\end{align}
For $\phi + \psi < \tfrac{1}{2}$, the one-point estimates imply
$\eqref{eq_proof_prop_endpoint_det_IC_5} \leq C e^{-c \ell^2 M^2}$
with uniform constants $C,c > 0$.
To bound \eqref{eq_proof_prop_endpoint_det_IC_6}, we consider a stationary TASEP with density $\sigma = \tfrac{1-\alpha}{2}$ and height function $h^\sigma$, coupled with $h^{\text{step}}$ by basic coupling. Notice that $\alpha t - y < \alpha t - \ell M t^{2/3}$ for $y \in I_\ell$. By Remark 4.4 of~\cite{BF22}, in $h^{\text{step}}$, there is always a backwards geodesic starting at $\alpha t-y$ and ending at $0$. By Lemma~\ref{lemma_end_point_geodesics_stationary_TASEP}, a backwards path in $h^\sigma$ starting at $\alpha t - \ell M t^{2/3} = (1-2\sigma)t - \ell M t^{2/3}$ ends in $[-2\ell M t^{2/3}, 0]$ with probability at least $1-Ce^{-c\ell^2 M^2}$. Given this event takes place, the backwards geodesics intersect and Lemma 4.6 of~\cite{BF22} implies
\begin{equation}
h^{\text{step}} (\alpha t - \ell M t^{2/3},t) - h^{\text{step}}(\alpha t -y, t)\leq h^{\sigma}(\alpha t - \ell M t^{2/3},t) - h^{\sigma}(\alpha t - y,t).
\end{equation}
Further, we have
$ h^{\sigma}(\alpha t - y,t) - h^{\sigma}(\alpha t - \ell M t^{2/3},t) \overset{(d)}{=} \sum_{j=1}^{y-\ell M t^{2/3}} (2Z_j-1)$,
where $Z_j$ are independent $\text{Ber}(\sigma)$-distributed random variables. We deduce
\begin{equation}  \label{eq_proof_prop_endpoint_det_IC_6.5} \begin{aligned}
 \eqref{eq_proof_prop_endpoint_det_IC_6}  \leq   \Pb \Bigl ( \sup_{y \in I_\ell} \Bigl \{ \sum\nolimits_{j=1}^{y-\ell M t^{2/3}} 2(\sigma-Z_j)  \Bigr  \}\geq  \psi \ell^2 M^2 t^{1/3} \Bigr) + C e^{-c \ell^2 M^2}.
\end{aligned}
\end{equation}
Doob's submartingale inequality yields $\eqref{eq_proof_prop_endpoint_det_IC_6.5} \leq C e^{-c \ell^2 M^2}$.

We conclude $\eqref{eq_proof_prop_endpoint_det_IC_3} \leq C e^{-c M^2}$ and
\begin{equation}
\Pb(x(0) \leq M t^{2/3}) \geq 1 - C e^{-c M^2}.
\end{equation}
The bound $\Pb(x(0) \geq - M t^{2/3}) \geq  1 - C e^{-c M^2}$ is proven analogously.
\end{proof}

\section{Endpoints of backwards paths and tail bounds in a TASEP with random initial data} \label{appendix_B}

We establish the auxiliary results for the proof of Theorem~\ref{thm_asymptotic_decoupling_height_functions_general}.

The localisation of the endpoints of backwards paths in a single-species TASEP with homogeneous random initial data follows the same strategy as in the proof of Proposition~\ref{prop_geodesic_end_point_deterministic_IC_half_periodic}. The tail estimates from Assumption~\ref{assumption_decoupling_general_IC}(b) are sufficient for a rough localisation, considering a region of width $t^{2/3+\e}$ instead of $M t^{2/3}$. For this purpose, the comparison to a stationary TASEP is not required.

\begin{lem} \label{lemma_general_IC_backwards_path}
	In the setting of the proof of Theorem~\ref{thm_asymptotic_decoupling_height_functions_general}, it holds $\lim_{t \to \infty} \Pb(E_\rho) = 1$.
\end{lem}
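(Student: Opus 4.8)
The plan is to reduce the claim to a rough localisation of the endpoint $x(0)$ and then to invoke the comparison machinery behind Proposition~\ref{prop_localisation_backwards_geodesics_general}. Writing $\alpha=1-2\rho$, I would first show that $\Pb(|x(0)| > \tfrac12 t^{2/3+\e}) \to 0$ as $t\to\infty$, and then apply Proposition~\ref{prop_localisation_backwards_geodesics_general} with $\alpha=1-2\rho$, $\beta=0$ (admissible since $\alpha-\beta=1-2\rho\in(-1,1)$ for $\rho\in\{\rho_1,\rho_1+\rho_2\}\subset(0,1)$) and $M=t^\e$: conditioning on this endpoint event and sandwiching $x(\tau)$ between the leftmost and rightmost backwards paths of two step initial conditions shifted by $\pm\tfrac12 t^{2/3+\e}$, whose localisation is supplied by~\cite{BF22,FN24}, gives $\Pb(E_\rho)\to1$. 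Hence the essential point is the endpoint bound, which I would establish following the strategy of Proposition~\ref{prop_geodesic_end_point_deterministic_IC_half_periodic}, replacing its comparison to a stationary TASEP by the tail estimates of Assumption~\ref{assumption_decoupling_general_IC}(b).

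For the endpoint bound I would argue as in Proposition~\ref{prop_geodesic_end_point_deterministic_IC_half_periodic}. By the concatenation property~\eqref{eq_concatenation_property_height_fct} and the geodesic property~\eqref{eq_geodesic_property}, the event that $\inf_{y\le \tfrac12 t^{2/3+\e}}\{h^\rho(y,0)+h^{\text{step}}_{y,0}(\alpha t,t)\}$ is strictly smaller than the corresponding infimum over $y>\tfrac12 t^{2/3+\e}$ forces $x(0)\le \tfrac12 t^{2/3+\e}$, so it suffices to separate the two infima by a threshold $A=\tfrac12(1+\alpha^2)t+\phi\,t^{1/3+2\e}$ with a fixed small $\phi>0$. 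For the near region I would use $h^\rho(0,0)=0$ to bound the infimum from above by $h^{\text{step}}_{0,0}(\alpha t,t)$, whose upper deviation of order $\phi\,t^{2\e}$ standard deviations above its mean $\tfrac12(1+\alpha^2)t$ is controlled by the one-point estimates of Proposition~A.9 of~\cite{BF22}.

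The main work, and the step I expect to be the principal obstacle, is the far region $y>\tfrac12 t^{2/3+\e}$. Here I would take a union bound and exploit that the typical value of $h^\rho(y,0)+h^{\text{step}}_{y,0}(\alpha t,t)$ is $\tfrac12(1+\alpha^2)t+\tfrac{y^2}{2t}$, so that dropping below $A$ requires a downward deviation of size $\Delta(y)=\tfrac{y^2}{2t}-\phi\,t^{1/3+2\e}$, which is $\ge c_0\,t^{1/3+2\e}>0$ on this range once $\phi$ is small. Splitting the deficit between the two summands, the lower deviation of $h^\rho(y,0)$ is bounded by Assumption~\ref{assumption_decoupling_general_IC}(b) (with $x=y\,t^{-2/3}$, giving a factor $C\exp(-c\,\Delta(y)\,y^{-1/2})$), and that of $h^{\text{step}}_{y,0}(\alpha t,t)$ by the one-point estimates. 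The delicate point is the boundary $y\approx t^{2/3+\e}$, where $\Delta(y)$ is smallest and $\Delta(y)\,y^{-1/2}\asymp t^{3\e/2}$; this still dominates the $\approx t^{2/3+\e}$ summands there, and a dyadic decomposition $y\in[2^k t^{2/3+\e},2^{k+1}t^{2/3+\e})$ shows that the quadratic growth of $\Delta(y)$ outpaces the increase in the number of terms, while the largest scales $y\asymp t$ lie outside the light cone of $h^{\text{step}}_{y,0}$ and contribute deterministically above $A$; summing yields a bound tending to $0$. The estimate $\Pb(x(0)<-\tfrac12 t^{2/3+\e})\to0$ is symmetric. Unlike in Proposition~\ref{prop_geodesic_end_point_deterministic_IC_half_periodic}, no comparison to a stationary TASEP is required, since Assumption~\ref{assumption_decoupling_general_IC}(b) already provides the necessary tail control near the boundary.
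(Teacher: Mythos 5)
Your overall strategy coincides with the paper's own proof: reduce $E_\rho$ to the endpoint bound $\Pb(|x(0)|\le\tfrac12 t^{2/3+\e})\to1$ and then run the sandwiching argument from the proof of Proposition~\ref{prop_localisation_backwards_geodesics_general} (strictly speaking you can only invoke its proof, not its statement, since you do not verify the Gaussian hypothesis $\Pb(|x(0)|\ge Mt^{2/3})\le Ce^{-cM^2}$ for all $M$; but the conditioning-and-sandwiching you describe is exactly that proof, and it is also what the paper does). For the endpoint bound you follow the scheme of Proposition~\ref{prop_geodesic_end_point_deterministic_IC_half_periodic} with the threshold $A=\tfrac12(1+\alpha^2)t+\phi t^{1/3+2\e}$, treat the near region via $h^{\text{step}}_{0,0}$ and Proposition~A.9 of~\cite{BF22}, and treat the bulk of the far region by splitting the deficit $\Delta(y)$ between Assumption~\ref{assumption_decoupling_general_IC}(b) and the step one-point estimates; up to replacing the paper's direct union bound, which yields $Ce^{-ct^{3\e/2}}$, by your dyadic decomposition, this is the same argument, and your observation that no comparison to a stationary TASEP is needed matches the paper.

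The gap is your treatment of the largest scales. The claim that for $y\asymp t$ the sum $h^\rho(y,0)+h^{\text{step}}_{y,0}((1-2\rho)t,t)$ lies outside the light cone of the step process and therefore "contributes deterministically above $A$" is false for random initial data. Outside the light cone the step term does equal $y-(1-2\rho)t$ (and is deterministically at least this, by monotonicity of the height function in time), but $h^\rho(y,0)$ is random, and its only deterministic lower bound is $h^\rho(y,0)\ge -y$; this gives a sum of at least $-(1-2\rho)t$, and since $-(1-2\rho)\le\tfrac12\bigl(1+(1-2\rho)^2\bigr)$ for every $\rho$, the sum never deterministically exceeds $A$. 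This step is legitimately deterministic in Proposition~\ref{prop_geodesic_end_point_deterministic_IC_half_periodic}, where $h(y,0)=\alpha y+H(y)$ with $\|H\|_\infty<\infty$, but not here. Moreover, the one-point estimates you use in the moderate region are only valid while $1-2\rho-yt^{-1}\in(-1,1)$, so a separate argument for $y$ beyond order $(2-2\rho)t$ is genuinely needed. The paper closes this region within your own framework: for $y\ge(1-\rho+\mu)t$ it combines the deterministic bound $h^{\text{step}}_{y,0}((1-2\rho)t,t)\ge y-(1-2\rho)t$ with Assumption~\ref{assumption_decoupling_general_IC}(b) applied to $h^\rho(y,0)$, obtaining $\Pb(\text{sum}\le A)\le Ce^{-c(y-(1-\rho)t)y^{-1/2}}$, which sums over this range to $Ce^{-c\mu t^{1/2}}$. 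With this replacement of your light-cone claim, your proof is complete.
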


\begin{proof}
	By the proof of Proposition~\ref{prop_localisation_backwards_geodesics_general}, it suffices to localise the endpoint of the backwards path by
	\begin{equation}
	\lim_{t \to \infty} \Pb(|x(0)| \leq \tfrac{1}{2} t^{2/3+\e}) = 1.
	\end{equation}
	We have
	\begin{equation} \label{eq_pf_lemma_general_IC_backwards_path_0}
	\begin{aligned}
	\Pb(x(0) \leq \tfrac{1}{2} t^{2/3+\e}) \geq 1 & - \Pb(h^{\text{step}}_{0,0}((1-2\rho)t,t) \geq A) \\
	&- \Pb\Bigl(\inf_{y > \frac{1}{2}t^{2/3+\e}} \{h^{\rho}(y,0) + h^{\text{step}}_{y,0}((1-2\rho)t,t) \} \leq A \Bigr),
	\end{aligned}
	\end{equation}
	where we set $A = (1-2\chi)t+\tfrac{1}{32} t^{1/3+2\e}$. By the one-point estimates from Proposition~A.9 of~\cite{BF22}, we have
	\begin{equation}
	\Pb(h^{\text{step}}_{0,0}((1-2\rho)t,t) \geq A) \leq C e^{-c t^{2\e}}.
	\end{equation}
	To bound the second probability in \eqref{eq_pf_lemma_general_IC_backwards_path_0}, we choose $\mu \in (0,1-\rho)$ and observe
	that for $\tfrac{1}{2}t^{2/3+\e} < y < (1-\rho+\mu)t$, we have $1-2\rho-yt^{-1} \in [-\rho-\mu,1-2\rho] \subseteq (-1,1)$, such that Proposition~A.9 of~\cite{BF22} and Assumption~\ref{assumption_decoupling_general_IC}(b) imply
	\begin{equation}
	\begin{aligned}
	&\Pb(h^{\rho}(y,0) + h^{\text{step}}_{y,0}((1-2\rho)t,t) \leq A) \\
	&\leq \Pb(h^{\rho}(y,0) \leq (1-2\rho)y - \tfrac{1}{4} y^2 t^{-1}) \\ & \hphantom{\leq}  + \Pb(h^{\text{step}}_{y,0}((1-2\rho)t,t) \leq \tfrac{1}{2}(1+(1-2\rho-yt^{-1})^2)t -\tfrac{1}{8}y^2 t^{-1}) \\
	&\leq C e^{-c y^{3/2} t^{-1}}+Ce^{-cy^2t^{-4/3}}
	\end{aligned}
	\end{equation}
	with uniform constants $C,c > 0$. Sub-additivity yields
	\begin{equation}
	\begin{aligned}
	\Pb\Bigl(\inf_{\frac{1}{2}t^{2/3+\e}<y<(1-\rho+\mu)t} \{h^{\rho}(y,0) + h^{\text{step}}_{y,0}((1-2\rho)t,t) \} \leq A \Bigr) \leq C e^{-c t^{3\e/2}}.
	\end{aligned}
	\end{equation}
	For $y \geq (1-\rho+\mu)t$, we use $h^{\text{step}}_{y,0}((1-2\rho)t,t) \geq |(1-2\rho)t-y|$ and derive
	\begin{equation}
	\Pb(h^{\rho}(y,0) + h^{\text{step}}_{y,0}((1-2\rho)t,t) \leq A) \leq C e^{-c (y-(1-\rho)t)y^{-1/2}}
	\end{equation}
	by Assumption~\ref{assumption_decoupling_general_IC}(b), which implies
	\begin{equation}
	 \Pb\Bigl(\inf_{y\geq(1-\rho+\mu)t} \{h^{\rho}(y,0) + h^{\text{step}}_{y,0}((1-2\rho)t,t) \} \leq A \Bigr) \leq C e^{-c\mu t^{1/2}}.
	\end{equation}
	We conclude $\lim_{t \to \infty} \Pb(x(0) \leq \tfrac{1}{2} t^{2/3+\e}) = 1$, and $\lim_{t \to \infty} \Pb(x(0) \geq - \tfrac{1}{2} t^{2/3+\e}) = 1$ follows analogously.
\end{proof}

The second auxiliary result relies on a distributional identity stemming from translation invariance, along with rough one-point estimates. These can be derived from the expression \eqref{eq_concatenation_property_height_fct} and Assumption~\ref{assumption_decoupling_general_IC}(b), provided that one considers deviations of strictly higher order than the typical fluctuations.

\begin{lem} \label{lemma_general_IC_translation}
	In the setting of the proof of Theorem~\ref{thm_asymptotic_decoupling_height_functions_general}, it holds $\lim_{t \to \infty} \Pb(G_\rho) = 1$.
\end{lem}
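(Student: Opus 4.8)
The plan is to reduce the uniform statement defining $G_\rho$ to a one-point estimate via a union bound, and then to reuse the distributional identity and the rough one-point tail bounds already exploited in the proofs of Theorem~\ref{thm_asymptotic_decoupling_height_functions_flat} and Lemma~\ref{lemma_general_IC_backwards_path}. Write
\begin{equation}
D(y) := h^{\rho}(y,0) - \big(h^{\rho}((1-2\rho)t^{\nu}+y,t^{\nu})-h^{\rho}((1-2\rho)t^{\nu},t^{\nu})\big),
\end{equation}
so that $G_\rho = \{\sup_{|y|\le t^{2/3+\e}}|D(y)| \le t^\theta\}$. Since there are only $\Or(t^{2/3+\e})$ integer points in the range, it suffices to show that $\Pb(|D(y)|>t^\theta)$ decays faster than any power of $t$, uniformly in $|y|\le t^{2/3+\e}$; a union bound then gives $\Pb(G_\rho^c)\to 0$.

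First I would center the two contributions. Using $\E[h^\rho(x,s)] = 2\chi s + (1-2\rho)x$ with $\chi=\rho(1-\rho)$, one checks $\E[D(y)]=0$ and splits
\begin{equation}
|D(y)| \le |A(y)| + |B|, \quad A(y):= h^\rho((1-2\rho)t^\nu+y,t^\nu)-h^\rho(y,0)-(1-2\chi)t^\nu, \quad B:= h^\rho((1-2\rho)t^\nu,t^\nu)-(1-2\chi)t^\nu.
\end{equation}
The key input is the distributional identity already recorded in the proof of Theorem~\ref{thm_asymptotic_decoupling_height_functions_flat}: by spatial translation invariance (Assumption~\ref{assumption_decoupling_general_IC}(a), up to a uniformly bounded shift) together with the conservation law, $h^\rho((1-2\rho)t^\nu+y,t^\nu)-h^\rho(y,0) \overset{(d)}{=} h^\rho((1-2\rho)t^\nu,t^\nu)$ for each fixed $y$, so $A(y)\overset{(d)}{=}B$ up to a uniformly bounded shift. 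Hence it is enough to bound the upper and lower tails of the single centered quantity $B$ at the deviation scale $t^\theta$.

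To bound $B$ I would use the variational formula \eqref{eq_concatenation_property_height_fct}, namely $h^\rho((1-2\rho)t^\nu,t^\nu) = \min_{x\in\Z}\{h^\rho(x,0)+h^{\text{step}}_{x,0}((1-2\rho)t^\nu,t^\nu)\}$, exactly as in Lemma~\ref{lemma_general_IC_backwards_path} but with the final time $t$ replaced by $t^\nu$. For the upper tail, taking $x=0$ and using $h^\rho(0,0)=0$ gives $B \le h^{\text{step}}_{0,0}((1-2\rho)t^\nu,t^\nu)-(1-2\chi)t^\nu$, whose deviations are controlled by the step-initial-condition one-point estimate of Proposition~A.9 of~\cite{BF22} (note $\E[h^{\text{step}}_{0,0}((1-2\rho)t^\nu,t^\nu)]\approx(1-2\chi)t^\nu$). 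For the lower tail one must show the minimum is not much smaller than $(1-2\chi)t^\nu$: I would split the range of $x$ at $(t^\nu)^{2/3+\e}$ and, as in the proof of Lemma~\ref{lemma_general_IC_backwards_path}, combine Assumption~\ref{assumption_decoupling_general_IC}(b) for $h^\rho(x,0)$ with Proposition~A.9 of~\cite{BF22} for $h^{\text{step}}_{x,0}$, using sub-additivity over $x$ and the quadratic gain of the step height away from the characteristic to discard large $|x|$. Crucially, since $\theta>\tfrac{\nu}{3}$, a deviation of order $t^\theta$ corresponds to $s=t^{\theta-\nu/3}\to\infty$ fluctuation units on the $(t^\nu)^{1/3}$-scale, so the $e^{-cs^{2}}$ and $e^{-cs^{3/2}}$ tails turn into stretched-exponential decay $e^{-ct^{\kappa}}$ in a positive power of $t$, uniformly in $y$.

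Combining the two tails gives $\Pb(|D(y)|>t^\theta)\le \Pb(|A(y)|>\tfrac12 t^\theta)+\Pb(|B|>\tfrac12 t^\theta)\le C e^{-ct^{\kappa}}$ uniformly in $|y|\le t^{2/3+\e}$, and the union bound yields $\Pb(G_\rho^c)\le C t^{2/3+\e} e^{-ct^{\kappa}}\to 0$. I expect the main obstacle to be the lower-tail estimate for $B$, i.e.\ showing the minimum is not too small: unlike the upper tail it cannot be read off from a single deterministic choice of $x$, and one must control the infimum over an unbounded range of $x$, treating the near-characteristic window and the macroscopically distant region separately. This is precisely the technical core of Lemma~\ref{lemma_general_IC_backwards_path} and Proposition~\ref{prop_geodesic_end_point_deterministic_IC_half_periodic}, so the remaining work is to transport those arguments to the intermediate time $t^\nu$ and to verify that the constants stay uniform in $y$ on the deviation scale $t^\theta \gg t^{\nu/3}$.
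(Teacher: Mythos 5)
Your proposal follows essentially the same route as the paper's proof: the conservation law plus translation invariance gives $A(y)\overset{(d)}{=}B$, a union bound over the $\Or(t^{2/3+\e})$ sites reduces everything to the one-point deviation of $h^\rho((1-2\rho)t^\nu,t^\nu)$ at scale $t^\theta \gg t^{\nu/3}$, and this is bounded via \eqref{eq_concatenation_property_height_fct} with the upper tail from $h^{\textup{step}}_{0,0}$ (Proposition~A.9 of~\cite{BF22}) and the lower tail by splitting the minimising range, combining Assumption~\ref{assumption_decoupling_general_IC}(b) with the step estimates near the characteristic and the deterministic bound $h^{\textup{step}}_{y,0}((1-2\rho)t^\nu,t^\nu)\geq|(1-2\rho)t^\nu-y|$ far from it. The only immaterial differences are your exact choice of split point and the (unneeded, and in the general setting not exactly valid) centering claim $\E[D(y)]=0$, which plays no role since only tail bounds around $(1-2\chi)t^\nu$ are used.
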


\begin{proof}
	The conservation law and the translation invariance of $\eta$ yield
	\begin{equation}
	h^{\rho}((1-2\rho)t^\nu+y,t^\nu)-h^\rho(y,0) \overset{(d)}{=} h^{\rho}((1-2\rho)t^\nu,t^\nu)
	\end{equation}
	for each fixed $y$. This implies
	\begin{equation}
	\begin{aligned}
	\Pb(G_\rho^c) \leq C t^{2/3+\e} \Pb(|h^{\rho}((1-2\rho)t^\nu,t^\nu)-(1-2\chi)t^{\nu}| > \tfrac{1}{2} t^\theta),
	\end{aligned}
	\end{equation}
	where $\theta \in (\tfrac{\nu}{3},\tfrac{1}{3})$. By \eqref{eq_concatenation_property_height_fct}, we have
	\begin{equation}
	\begin{aligned}
	& \Pb(|h^{\rho}((1-2\rho)t^\nu,t^\nu)-(1-2\chi)t^{\nu}| > \tfrac{1}{2} t^\theta) \\
	& \leq \Pb(h^{\text{step}}_{0,0}((1-2\rho)t^\nu,t^\nu) - (1-2\chi)t^\nu > \tfrac{1}{2} t^\theta) \\
	& \hphantom{\leq} + \Pb \Bigl( \min_{y \in \Z} \{h^\rho(y,0)+h^{\text{step}}_{y,0}((1-2\rho)t^\nu,t^\nu) \} - (1-2\chi)t^\nu < - \tfrac{1}{2} t^\theta \Bigr).
	\end{aligned}
	\end{equation}
	The first probability is bounded by $Ce^{-c t^{\theta-\nu/3}}$ by Proposition~A.9 of~\cite{BF22}.
	To bound the second probability, we choose $\mu \in (0,\min\{1-\rho,\rho\})$, such that for $-(\rho+\mu)t^\nu \leq y \leq (1-\rho+\mu)t^\nu$, it holds $1-2\rho-yt^{-\nu} \in [-\rho-\mu,1-\rho+\mu] \subseteq (-1,1)$, and derive by Assumption~\ref{assumption_decoupling_general_IC}(b) and Proposition~A.9 of~\cite{BF22} that
	\begin{equation}
	\begin{aligned}
	&\Pb(h^\rho(y,0)+h^{\text{step}}_{y,0}((1-2\rho)t^\nu,t^\nu) - (1-2\chi)t^\nu < - \tfrac{1}{2}t^\theta) \\
	&\leq \Pb(h^{\rho}(y,0)-(1-2\rho)y < - \tfrac{1}{4} y^2 t^{-\nu} - \tfrac{1}{4} t^\theta) \\
	&\hphantom{\leq}+\Pb(h^{\text{step}}_{y,0}((1-2\rho)t^\nu,t^\nu) < \tfrac{1}{2}(1+(1-2\rho-yt^{-\nu})^2)t^\nu-\tfrac{1}{4} y^2 t^{-\nu} - \tfrac{1}{4} t^\theta) \\
	&\leq Ce^{-c(|y|^{3/2} t^{-\nu}+t^\theta|y|^{-1/2})} \mathbbm{1}_{\{y \neq 0\}}+Ce^{-c(y^2t^{-4\nu/3} + t^{\theta-\nu/3})}.
	\end{aligned}
	\end{equation}
	Using sub-additivity and splitting the sum at $|y| = t^{\nu/3+\theta}$, we bound
	\begin{equation}
	\Pb \Bigl( \min_{-(\rho+\mu)t^\nu \leq y \leq (1-\rho+\mu)t^\nu} \{h^\rho(y,0)+h^{\text{step}}_{y,0}((1-2\rho)t^\nu,t^\nu) \} - (1-2\chi)t^\nu < - \tfrac{1}{2} t^\theta \Bigr)
	\end{equation}
	by $C e^{-ct^{(\theta-\nu/3)/2}}$ for $t$ large. The probabilities for the regions $y < -(\rho+\mu)t^\nu$ and $y > (1-\rho+\mu)t^\nu$ can be bounded by $Ce^{-c\mu t^{\nu/2}}$ respectively, using $h^{\text{step}}_{y,0}((1-2\rho)t^\nu,t^\nu) \geq |(1-2\rho)t^\nu-y|$ and Assumption~\ref{assumption_decoupling_general_IC}(b).
	We conclude $\lim_{t \to \infty} \Pb(G_\rho) = 1$.
\end{proof}

\section{Existence of second class particles in samples of $\mu^{\rho_1,\rho_2}$} \label{appendix_C}

In this section, we prove Lemma~\ref{lemma_second_class_particle_in_interval_with_high_prob}. For this purpose, we require the almost sure existence of a second class particle at some site:

\begin{lem} \label{lemma_second_class_particle_exists}
	In a sample of $\mu^{\rho_1,\rho_2}$, there exists a second class particle with probability $1$.
\end{lem}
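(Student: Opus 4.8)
The plan is to work directly with the queueing representation of Section~\ref{section_queueing_representation} and to translate ``existence of a second class particle'' into a statement about the associated queue length process $(Q_i)_{i\in\Z}$, which is a positive recurrent reflected random walk.

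First I would record the precise condition for a second class particle. Recall from \eqref{eq_5.27} that $Q_i = \max\{\sup_{j\geq i}\{\mathcal{A}_{[i,j]}-\mathcal{S}_{[i,j]}\},0\}$; isolating the site-$i$ contribution yields the Lindley-type recursion
\[
Q_i = \max\{Q_{i+1}+a(i)-s(i),\,0\}.
\]
Combined with $d(i)=a(i)+Q_{i+1}-Q_i$, which is the case $j=i$ of the identity $\mathcal{D}_{[i,j]}=Q_{j+1}-Q_i+\mathcal{A}_{[i,j]}$ used in the proof of Lemma~\ref{lemma_formula_for_IC_process_sup}, this shows that $\eta(z)=2$ (i.e.\ $s(z)=1$ and $d(z)=0$) holds \emph{if and only if} $s(z)=1$, $a(z)=0$ and $Q_{z+1}=0$. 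Indeed, the recursion forces $\max\{Q_{z+1}+a(z)-1,0\}=Q_{z+1}+a(z)$ only when $Q_{z+1}=a(z)=0$.

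Next I would exploit that the increments $a(i)-s(i)$ are i.i.d.\ with mean $\rho_1-(\rho_1+\rho_2)=-\rho_2<0$. Hence the supremum in \eqref{eq_5.27} is almost surely finite (the underlying walk drifts to $-\infty$), so $(Q_i)_{i\in\Z}$ is the unique stationary Loynes solution of the recursion and $\Pb(Q_1=0)>0$. Since $(a(z),s(z))$ is independent of $Q_{z+1}$ (the latter being a measurable function of $(a(i),s(i))_{i>z}$), the identification above gives
\[
\Pb(\eta(0)=2)=\Pb(Q_1=0)\,(1-\rho_1)(\rho_1+\rho_2)=\rho_2>0,
\]
consistent with the second class particle density. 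Moreover $\eta$ is obtained from the i.i.d.\ field $(a(i),s(i))_{i\in\Z}$ by a measurable map commuting with the shift, so $\mu^{\rho_1,\rho_2}$ is translation invariant and ergodic, being a factor of an ergodic (indeed mixing) system.

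Finally, the event $\{\exists\,z\in\Z:\eta(z)=2\}$ is invariant under the shift, so by ergodicity it has probability $0$ or $1$; since its probability is at least $\Pb(\eta(0)=2)=\rho_2>0$, it equals $1$, proving the lemma. (Birkhoff's theorem applied to $\eta(z)=2$ in fact yields that second class particles occur with spatial density $\rho_2$, hence are almost surely infinite in number.) The only points needing care are the exact identification of the second-class-particle condition from the construction and the finiteness and ergodicity of the stationary queue; both are routine once the negative drift $-\rho_2$ is observed, and no input beyond Section~\ref{section_queueing_representation} is required.
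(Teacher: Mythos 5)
Your proof is correct, but it takes a genuinely different route from the paper's. You reduce the statement to the queue: via the Lindley recursion $Q_i=\max\{Q_{i+1}+a(i)-s(i),0\}$ and $d(i)=a(i)+Q_{i+1}-Q_i$ you identify $\{\eta(z)=2\}=\{s(z)=1,\,a(z)=0,\,Q_{z+1}=0\}$, obtain $\Pb(\eta(0)=2)=(1-\rho_1)(\rho_1+\rho_2)\Pb(Q_1=0)>0$ from the negative drift $-\rho_2$ (a negative-drift walk stays nonpositive forever with positive probability), and conclude by noting that $\eta$ is a shift-equivariant measurable factor of the i.i.d.\ field $(a(i),s(i))_{i\in\Z}$, hence ergodic, so the shift-invariant event $\{\exists z:\eta(z)=2\}$ has probability $0$ or $1$. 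The paper instead argues by contradiction directly on the matching: on the event that no second class particle exists, every service point is used, and since edges do not cross, any matched pair $(i,j)\in E$ forces the domination $\mathcal{A}_{[n,i]}\geq\mathcal{S}_{[n,j]}$ for all $n<j$; a countable union bound over $(i,j)$ combined with Hoeffding's inequality on windows of arbitrary length shows each such event is null. Your argument is softer and yields more: by Birkhoff's theorem the second class particles have almost sure spatial density $\rho_2$, hence are almost surely infinite in number. The paper's argument is quantitative and self-contained (no ergodic theory is invoked), and the leftward domination inequality it isolates is essentially what gets recycled, in quantitative form, in the proof of Lemma~\ref{lemma_second_class_particle_in_interval_with_high_prob}. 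Two cosmetic points: the equality $\Pb(Q_1=0)(1-\rho_1)(\rho_1+\rho_2)=\rho_2$ that you assert requires computing $\Pb(Q_1=0)=\rho_2/((1-\rho_1)(\rho_1+\rho_2))$, which you do not carry out — but only positivity is needed, so nothing is lost; likewise, uniqueness of the stationary Loynes solution is not needed, only that the supremum in \eqref{eq_5.27} is almost surely finite and satisfies the recursion.
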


Lemma~\ref{lemma_second_class_particle_exists} is proven later in this section.

\begin{proof}[Proof of Lemma~\ref{lemma_second_class_particle_in_interval_with_high_prob}]
By Lemma~\ref{lemma_second_class_particle_exists}, we find
	\begin{equation} \label{eq_proof_second_class_p_in_interval_0} \begin{aligned}
	 \Pb ( \forall i \in I: \ \eta(i) \neq 2)  & \leq  \Pb ( \forall i \in I: \ \eta(i) \neq 2, \exists \text{ maximal } x_l < u: \eta(x_l) = 2 )
	\\ & \hphantom{\leq} + \Pb ( \forall i \in I: \ \eta(i) \neq 2, \exists \text{ minimal } x_r > v: \eta(x_r) = 2 )
	\end{aligned} 	\end{equation}
	for $I=[u,v] \cap \Z$.
	We bound the second probability in \eqref{eq_proof_second_class_p_in_interval_0} by a similar reasoning as that in~\cite{FFK94}, where the distance between two second class particles was bounded\footnote{We refer to the proof of Lemma~2.1, see also Remark~3.2, of~\cite{FFK94}.}. Their arguments require the existence of a second class particle to the right of the interval $I=[u,v]$.
	
	For $x \in \Z$, we define a random walk $Z_x$ by $Z_x(n) = \sum_{i=n}^{x-1} (s(i)-a(i))$ for $n < x$ and $Z_x(x) = 0$. If in $\eta$, there is a second class particle at $x$, then the next second class particle to its left is at the right-most site $n< x$ such that $Z_x(n) = 1$. 	
 Then, the second probability in \eqref{eq_proof_second_class_p_in_interval_0} equals
	\begin{equation} \label{eq_proof_second_class_p_in_interval_1} \begin{aligned}
	& \Pb ( \exists x_r > v: \eta(x_r) = 2, \ \forall i \in [u,x_r-1]: \eta(i) \neq 2) \\
	&= \Pb \Bigl (  \exists  x_r > v: \eta(x_r) = 2, \ \sup_{n < x_r }\{Z_{x_r} (n)=1\} < u    \Bigr  ) .
	\end{aligned}
	\end{equation}
	From the proof of Lemma 2.1 of~\cite{FFK94}, we obtain $\sup_{n < x_r }\{Z_{x_r} (n)=1\} > -\infty$ almost surely. Similarly as in~\cite{FFK94}, Hoeffding's inequality yields
	
	\begin{equation} \label{eq_proof_second_class_p_in_interval_2}
	\begin{aligned}
	\eqref{eq_proof_second_class_p_in_interval_1} \leq & \sum\nolimits_{k=v+1}^\infty \Pb \Bigl ( \eta(k) = 2, \ \sup_{n < k }\{Z_{k} (n)=1\} < u    \Bigr  ) \\
	\leq & \sum\nolimits_{k=v+1}^\infty \sum\nolimits_{j=k-u +1}^\infty \Pb \Bigl (  \sup_{n < 0 }\{Z_{0} (n)=1\} = -j    \Bigr ) \\
	\leq & \sum\nolimits_{k=v+1}^\infty \sum\nolimits_{j=k-u +1}^\infty C e^{-c j} \leq C e^{-c \ell}
	\end{aligned}
	\end{equation}
	since $\ell=v-u$.
	
	We bound the first probability in \eqref{eq_proof_second_class_p_in_interval_0} by similar means, relying on the alternative queueing construction of $\mu^{\rho_1,\rho_2}$ explained in Remark~\ref{remark_alternative_queueing_construction}.
\end{proof}

\begin{proof}[Proof of Lemma~\ref{lemma_second_class_particle_exists}]
	Let $E$ be the set of edges in the construction of $\mu^{\rho_1,\rho_2}$ and define $F = \{ \forall i \in \Z: \eta(i) \neq 2\}$. With probability $1$, there exist $j \in \Z$ and $ i \geq j$ such that $s(j) = 1, a(i) = 1$ and $(i,j) \in E$. Since the edges in $E$ do not intersect, it holds
	\begin{equation} \label{eq_proof_second_class_particle_exists_1}
	\begin{aligned}
	\Pb(F) = \ & \Pb(\{\exists j \in \Z, i \geq j: s(j)=1, a(i) = 1, (i,j) \in E \} \cap F) \\
	\leq \ & \Pb(\exists j \in \Z, i \geq j: \ \mathcal{A}_{[n,i]} \geq \mathcal{S}_{[n,j]} \text{ for all } n < j) \\
	\leq \ & \sum_{j \in \Z, i \geq j} \Pb ( \mathcal{A}_{[n,i]} \geq \mathcal{S}_{[n,j]} \text{ for all } n < j).
	\end{aligned}
	\end{equation}
	For $j \in \Z, i \geq j$ fixed and $n^* = j - \tfrac{4}{\rho_2} (i-j) - t$ with $t > 0$, we have
	\begin{equation} \label{eq_proof_second_class_particle_exists_2}
	\begin{aligned}
	\Pb ( \mathcal{A}_{[n,i]} \geq \mathcal{S}_{[n,j]} \text{ for all } n < j)  &\leq   \Pb ( \mathcal{A}_{(n^*,j]} + i - j \geq \mathcal{S}_{(n^*,j]} ) \\
	& \leq \Pb (\mathcal{A}_{(n^*,j]} \geq m) + \Pb ( \mathcal{S}_{(n^*,j]} \leq m + i - j).
	\end{aligned}
	\end{equation}
	Choosing $m = (\rho_1 + \tfrac{1}{2} \rho_2)(j-n^*) +j-i$, we have
	$m + i - j \leq (\rho_1 + \rho_2)(j-n^*)$ and $m \geq \rho_1 (j-n^*)$. Hoeffding's inequality yields
	\begin{equation} \begin{aligned}
	& \Pb (\mathcal{A}_{(n^*,j]} \geq m) \leq e^{-2(j-n^*)  (\rho_1 - \tfrac{m}{j-n^*})^2} \leq e^{-\tfrac{1}{8} \rho_2^2 t }, \\
	& \Pb ( \mathcal{S}_{(n^*,j]} \leq m + i - j) \leq e^{-2(j-n^*) (\tfrac{1}{2} \rho_2)^2} \leq e^{-\tfrac{1}{2} \rho_2^2 t}.
	\end{aligned}
	\end{equation}
	Since the first inequality in \eqref{eq_proof_second_class_particle_exists_2} holds for any choice of $t$, we derive
	\begin{equation}
	\Pb ( \mathcal{A}_{[n,i]} \geq \mathcal{S}_{[n,j]} \text{ for all } n < j)  = 0.
	\end{equation}
	Together with \eqref{eq_proof_second_class_particle_exists_1}, this implies $\Pb(F) = 0$.
\end{proof}

\section{Computations for the hydrodynamics theory} \label{Appendix_GHD}
In this section, we compute the average current $\vec{\mathrm{j}}(\vec \rho)$ and the susceptibility matrix $C$ given in \eqref{eq.C} and the subsequent paragraph. We use the notation from Section~\ref{section_motivation}, but write $\E$ instead of $\E_{\mu_{\vec \rho}}$ and set $\eta_\alpha(j) = \eta_\alpha(j,0)$ and $\mathcal{J}_\alpha(j) = \mathcal{J}_\alpha(j,0)$.
For the two-species TASEP, the currents of first and second class particles are given by
\begin{equation}
\begin{aligned}
\mathcal{J}_1(j)& = \eta_1(j)(1-\eta_1(j+1)),\\
\mathcal{J}_2(j)& = \eta_2(j)(1-\eta_1(j+1)-\eta_2(j+1))-\eta_2(j+1)\eta_1(j).
\end{aligned}
\end{equation}
Due to stationarity, their distributions remain unchanged over time.
Next notice that we can rewrite
\begin{equation}
\mathcal{J}_2(j)= (\eta_1(j)+\eta_2(j))(1-\eta_1(j+1)-\eta_2(j+1))-\eta_1(j)(1-\eta_1(j+1)),
\end{equation}
and using the fact that $\eta_1$ and $\eta_1+\eta_2$ have Bernoulli product distributions, we immediately get
\begin{equation}
\begin{aligned}
\E[\mathcal{J}_1(j)]&=\rho_1(1-\rho_1),\\
\E[\mathcal{J}_2(j)]&=(\rho_1+\rho_2)(1-\rho_1-\rho_2)-\rho_1(1-\rho_1)=\rho_2(1-\rho_2)-2\rho_1\rho_2.
\end{aligned}
\end{equation}

Let us now compute the entries of the susceptibility matrix $C$. We have
\begin{equation}
C_{1,1}=\sum_{j\in\Z}\Cov(\eta_1(j),\eta_1(0))=\sum_{j\in\Z}\left(\Pb(\eta_1(j)=\eta_1(0)=1)-\rho_1^2\right)=\rho_1(1-\rho_1)
\end{equation}
since the marginal distribution of the first class particles is a Bernoulli product measure with density $\rho_1$, so the only non-zero contribution in the sum comes from $j=0$.
Similarly, it holds
\begin{equation}
C_{1,1}+C_{1,2}+C_{2,1}+C_{2,2}=\sum_{j\in\Z}\Cov(\eta_1(j)+\eta_2(j),\eta_1(0)+\eta_2(0))=(\rho_1+\rho_2)(1-\rho_1-\rho_2).
\end{equation}
Using $C_{1,2}=C_{2,1}$ we obtain
\begin{equation}
C=\left(\begin{array}{cc}
  \rho_1(1-\rho_1) & C_{1,2} \\
  C_{1,2} & (\rho_1+\rho_2)(1-\rho_1-\rho_2)-\rho_1(1-\rho_1)-2C_{1,2}
  \end{array}\right).
\end{equation}
Plugging this into the identity \eqref{eq1}, $A C = C A^T$, we get that $C_{1,2}=-\rho_1(1-\rho_1)$.

Alternatively, to compute the expected values of the speed, one can apply Theorem~1.7 of~\cite{AAV11}, which explicitly characterises the joint distribution of two consecutive speeds in the TASEP speed process. This result is proven by mapping the speed process to a realisation of $\mu_{\vec \rho}$, see Corollary~5.4 of~\cite{AAV11}, and employing its queueing representation. Similarly, to compute the matrix $C$, for instance to determine $C_{2,2}$, one separates the contributions of $j=0$, $j>0$ and $j<0$, uses translation invariance, and applies Lemma~6.2 of~\cite{AAV11} with $x=\rho_1$ and $y=\rho_1+\rho_2$.

\section{On the queue length for general asymmetry} \label{appendix_E}

In Section~\ref{section_properties_queue_length_ASEP}, we claim that the stationary queue length $Q_i$ constructed in Section~\ref{section_queueing_representation_ASEP} fulfils: there exist a finite set $I$ and constants $b < \infty, \beta > 0, c>0$ such that $V(x)=e^{cx}$ satisfies
\begin{equation}
\E[V(Q_i)|Q_{i+1}=x]-V(x) \leq - \beta V(x) + b \Id_I(x).
\end{equation}
This can be seen as follows.
We have
\begin{equation}
\begin{aligned}
& \E[V(Q_i)|Q_{i+1}=x]-V(x) \\ = & \rho_1(1-\rho_1-\rho_2)V(x+1) + (1-\rho_1)(\rho_1+\rho_2)(1-q^x)V(x-1) \\ & - [\rho_1(1-\rho_1-\rho_2) + (1-\rho_1)(\rho_1+\rho_2)(1-q^x)]V(x) \\
= & V(x)[\rho_1(1-\rho_1-\rho_2)(e^c-1) + (1-\rho_1)(\rho_1+\rho_2)(1-q^x)(e^{-c}-1)].
\end{aligned}
\end{equation}
We denote the right hand side by $V(x)f(c,x)$. Then, $f(c,x) < 0$ is equivalent to
\begin{equation}
e^c < \frac{(1-\rho_1)(\rho_1+\rho_2)(1-q^x)}{\rho_1(1-\rho_1-\rho_2)},
\end{equation}
and as $x \to \infty$, the right hand side converges to
\begin{equation}
\frac{(1-\rho_1)(\rho_1+\rho_2)}{\rho_1(1-\rho_1-\rho_2)} > 1.
\end{equation}
Thus, there exist some $x_0 \in \N$ and $c,\beta > 0$ such that for all
$x>x_0$, it holds
\begin{equation}
\E[V(Q_i)|Q_{i+1}=x]-V(x) \leq - \beta V(x).
\end{equation}
For $x \in I \coloneqq \{0, \dots, x_0 \}$, we have $V(x)f(c,x) \leq b$ for some $b< \infty$. This yields the claim.

\section*{Acknowledgements} The authors are grateful to Pedro Cardoso for discussions on the ABC model and to Ofer Busani for exchanges on the queueing representations. They thank the two anonymous referees for their careful reading and constructive suggestions.

\section*{Declarations}
This work was partially supported by the Deutsche Forschungsgemeinschaft (DFG, German Research Foundation) through the Hausdorff Center for Mathematics (EXC 2047/1, project-ID 390685813) and the Collaborative Research Centre \qq{Analysis of Criticality: From Complex Phenomena to Models and Estimates} (CRC 1720, project-ID 539309657). The authors have no competing interests to declare that are relevant to the content of this article.



\begin{thebibliography}{10}

\bibitem{ACG23}
A.~Aggarwal, I.~Corwin, and P.~Ghosal.
\newblock {The ASEP speed process}.
\newblock {\em Adv. Math.}, 422:109004, 2023.

\bibitem{ACH24}
A.~Aggarwal, I.~Corwin, and M.~Hegde.
\newblock {Scaling limit of the colored ASEP and stochastic six-vertex models}.
\newblock {\em arXiv:2403.01341v2}, 2024.

\bibitem{Agg18}
A.~Aggarwal. Current fluctuations of the stationary ASEP and six-vertex model. \emph{Duke Math. J.}, {167}:269--384, 2018.

\bibitem{ANP25}
A.~Aggarwal, M.~Nicoletti, and L.~Petrov.
\newblock {Colored interacting particle systems on the ring: stationary
  measures from Yang-Baxter equation}.
\newblock {\em Compos. Math.}, 161:1855--1922, 2025.

\bibitem{AAV11}
G.~Amir, O.~Angel, and B.~Valk\'{o}.
\newblock {The TASEP speed process}.
\newblock {\em Ann. Probab.}, 39:1205--1242, 2011.

\bibitem{ABGM21}
G.~Amir, O.~Busani, P.~Gon\c{c}alves, and J.B.~Martin.
\newblock {The TAZRP speed process}.
\newblock {\em Ann. Inst. Henri Poincaré Probab. Stat.}, 57:1281--1305, 2021.

\bibitem{Ang06}
O.~Angel.
\newblock {The stationary measure of a 2-type totally asymmetric exclusion
  process}.
\newblock {\em J. Comb. Theory Ser. A}, 113:625--635, 2006.

\bibitem{Asm03}
S.~Asmussen.
\newblock {\em Applied probability and queues}.
\newblock Springer Verlag, New York, 2003.

\bibitem{BFP09}
J.~Baik, P.L. Ferrari, and S.~P{\'e}ch{\'e}.
\newblock {Limit process of stationary TASEP near the characteristic line}.
\newblock {\em Comm. Pure Appl. Math.}, 63:1017--1070, 2010.

\bibitem{BFP12}
J.~Baik, P.L. Ferrari, and S.~P{\'e}ch{\'e}.
\newblock {Convergence of the two-point function of the stationary TASEP}.
\newblock In {\em {Singular Phenomena and Scaling in Mathematical Models}},
  pages 91--110. Springer, 2014.

\bibitem{BR00}
J.~Baik and E.M. Rains.
\newblock Limiting distributions for a polynuclear growth model with external
  sources.
\newblock {\em J. Stat. Phys.}, 100:523--542, 2000.

\bibitem{BS09}
M.~Balázs and T.~Seppäläinen.
\newblock Fluctuation bounds for the asymmetric simple exclusion process.
\newblock {\em ALEA Lat. Am. J. Probab. Math. Stat.}, 6:1--24, 2009.

\bibitem{BFS21}
C.~Bernardin, T.~Funaki, and S.~Sethuraman.
\newblock {Derivation of coupled KPZ-Burgers equation from multi-species
  zero-range processes}.
\newblock {\em Ann. Appl. Probab.}, 31:1966--2017, 2021.

\bibitem{BF22}
A.~Bufetov and P.L. Ferrari.
\newblock {Shock fluctuations in TASEP under a variety of time scalings}.
\newblock {\em Ann. Appl. Probab.}, 32:3614--3644, 2022.

\bibitem{Bus24}
O.~Busani.
\newblock {Diffusive scaling limit of the Busemann process in last passage
  percolation}.
\newblock {\em Ann. Probab.}, 52:1650--1712, 2024.

\bibitem{BSS23}
O.~Busani, T.~Sepp{\"a}l{\"a}inen, and E.~Sorensen.
\newblock {Scaling limit of the TASEP speed process}.
\newblock {\em arXiv:2211.04651v4}, 2022.

\bibitem{BSS24}
O.~Busani, T.~Seppäläinen, and E.~Sorensen.
\newblock {Scaling limit of multi-type invariant measures via the directed
  landscape}.
\newblock {\em Int. Math. Res. Not. IMRN}, 2024:12382--12432, 2024.

\bibitem{CCGO26}
G.~Cannizzaro, P.~Cardoso, L.~Gr\"afner, and A.~Occelli.
\newblock {Equilibrium fluctuations for a multi-species particle system with long jumps}.
\newblock {\em arXiv:2604.03777}, 2026.


\bibitem{CGMO24}
G.~Cannizzaro, P.~Gon\c{c}alves, R.~Misturini, and A.~Occelli.
\newblock {From ABC to KPZ}.
\newblock {\em Probab. Theory Relat. Fields}, 191:361--420, 2025.


\bibitem{CGHS18}
Z.~Chen, J.~de~Gier, I.~Hiki, and T.~Sasamoto.
\newblock {Exact confirmation of 1D nonlinear fluctuating hydrodynamics for a
  two-species exclusion process}.
\newblock {\em Phys. Rev. Lett.}, 120:240601, 2018.

\bibitem{CGHSU22}
Z.~Chen, J.~de~Gier, I.~Hiki, T.~Sasamoto, and M.~Usui.
\newblock {Limiting current distribution for a two species asymmetric exclusion
  process}.
\newblock {\em Comm. Math. Phys.}, 395:59--142, 2022.

\bibitem{CFS16}
S.~Chhita, P.L. Ferrari, and H.~Spohn.
\newblock {Limit distributions for KPZ growth models with spatially homogeneous
  random initial conditions}.
\newblock {\em Ann. Appl. Probab.}, 28:1573--1603, 2018.

\bibitem{DDSMS14}
S.G. Das, A.~Dhar, K.~Saito, C.B. Mendl, and H.~Spohn.
\newblock {Numerical test of hydrodynamic fluctuation theory in the
  Fermi-Pasta-Ulam chain}.
\newblock {\em Phys. Rev. E}, 90:012124, 2014.

\bibitem{DOV22}
D.~Dauvergne, J.~Ortmann, and B.~Vir\'ag.
\newblock The directed landscape.
\newblock {\em Acta Math.}, 229:201--285, 2022.

\bibitem{DV21}
D.~Dauvergne and B.~Virág.
\newblock {The scaling limit of the longest increasing subsequence}.
\newblock {\em arXiv:2104.08210}, 2021.

\bibitem{DJLS93}
B.~Derrida, S.A. Janowsky, J.L. Lebowitz, and E.R. Speer.
\newblock Exact solution of the totally asymmetric simple exclusion process:
  shock profiles.
\newblock {\em J. Stat. Phys.}, 73:813--842, 1993.

\bibitem{EMP09}
M.~Evans, K.~Mallick, and S.~Prolhac.
\newblock {The matrix product solution of the multi-species partially
  asymmetric exclusion process}.
\newblock {\em J. Phys. A: Math. Theor.}, 42:165004, 2009.

\bibitem{EFM09}
M.R. Evans, P.A. Ferrari, and K.~Mallick.
\newblock {Matrix representation of the stationary measure for the multispecies
  TASEP}.
\newblock {\em J. Stat. Phys.}, 135:217--239, 2009.

\bibitem{FFK94}
P.A. Ferrari, L.R.G. Fontes, and Y.~Kohayakawa.
\newblock {Invariant measures for a two-species asymmetric process}.
\newblock {\em J. Stat. Phys.}, 76:1153--1177, 1994.

\bibitem{FKS91}
P.A. Ferrari, C.~Kipnis, and E.~Saada.
\newblock Microscopic structure of travelling waves in the asymmetric simple
  exclusion process.
\newblock {\em Ann. Probab.}, 19:226--244, 1991.

\bibitem{FM06}
P.A. Ferrari and J.B.~Martin.
\newblock {Multi-class processes, dual points and M/M/1 queues}.
\newblock {\em Markov Process. Related Fields}, 12:175--201, 2006.

\bibitem{FM07}
P.A. Ferrari and J.B. Martin.
\newblock {Stationary distribution of multi-type totally asymmetric exclusion
  processes}.
\newblock {\em Ann. Probab.}, 35:807--832, 2007.

\bibitem{FG26}
P.L. Ferrari and S.~Gernholt.
\newblock {Tagged particle fluctuations for TASEP with dynamics restricted by a
  moving wall}.
\newblock {\em Ann. Appl. Probab.}, 36:451--493, 2026.

\bibitem{FN19}
P.L. Ferrari and P.~Nejjar.
\newblock {Statistics of TASEP with three merging characteristics}.
\newblock {\em J. Stat. Phys.}, 180:398--413, 2020.

\bibitem{FN24}
P.L. Ferrari and P.~Nejjar.
\newblock The second class particle process at shocks.
\newblock {\em Stoch. Proc. Appl.}, 170:104298, 2024.

\bibitem{FO17}
P.L. Ferrari and A.~Occelli.
\newblock {Universality of the GOE Tracy-Widom distribution for TASEP with
  arbitrary particle density}.
\newblock {\em Electron. J. Probab.}, 23:1--24, 2018.

\bibitem{FSS13}
P.L. Ferrari, T.~Sasamoto, and H.~Spohn.
\newblock {Coupled Kardar-Parisi-Zhang equations in one dimension}.
\newblock {\em J. Stat. Phys.}, 153:377--399, 2013.

\bibitem{FS05a}
P.L. Ferrari and H.~Spohn.
\newblock Scaling limit for the space-time covariance of the stationary totally
  asymmetric simple exclusion process.
\newblock {\em Comm. Math. Phys.}, 265:1--44, 2006.

\bibitem{Ger24}
S.~Gernholt.
\newblock {TASEP with a general initial condition and a deterministically
  moving wall}.
\newblock {\em Electron. J. Probab.}, 30:1--38, 2025.

\bibitem{GS11}
R.M. Grisi and G.M. Schütz.
\newblock{Current symmetries for particle systems with several conservation laws}.
\newblock {\em J. Stat. Phys.}, 145:1499--1512, 2011.

\bibitem{Har72}
T.E. Harris.
\newblock {Nearest-neighbor Markov interaction processes on multidimensional
  lattices}.
\newblock {\em Adv. Math.}, 9:66--89, 1972.

\bibitem{Har78}
T.E. Harris.
\newblock {Additive set-valued Markov processes and graphical methods}.
\newblock {\em Ann. Probab.}, 6:355--378, 1978.

\bibitem{KPZ86}
M.~Kardar, G.~Parisi, and Y.Z. Zhang.
\newblock Dynamic scaling of growing interfaces.
\newblock {\em Phys. Rev. Lett.}, 56:889--892, 1986.

\bibitem{KV86}
C.~Kipnis and S.R.S. Varadhan.
\newblock {Central limit theorem for additive functionals of reversible Markov
  processes and applications to simple exclusions}.
\newblock {\em Comm. Math. Phys.}, 104:1--19, 1986.

\bibitem{KHS15}
M.~Kulkarni, D.A. Huse, and H.~Spohn.
\newblock {Fluctuating hydrodynamics for a discrete Gross-Pitaevskii equation:
  Mapping onto the Kardar-Parisi-Zhang universality class}.
\newblock {\em Phys. Rev. A}, 92:043612, 2015.

	\bibitem{LS25}
B.~Landon and P.~Sosoe.
\newblock{Tail estimates for the stationary stochastic six-vertex model and ASEP}.
\newblock {\em Probab. Math. Phys.}, 6:1327--1378, 2025.

\bibitem{Lez98}
P.~Lezaud.
\newblock {Chernoff-type bound for finite Markov chains}.
\newblock {\em Ann. Appl. Probab.}, 8:849--867, 1998.

\bibitem{Lig76}
T.M. Liggett.
\newblock Coupling the simple exclusion process.
\newblock {\em Ann. Probab.}, 4:339--356, 1976.

\bibitem{Mar20}
J.B.~Martin.
\newblock {Stationary distributions of the multi-type ASEP}.
\newblock {\em Electron. J. Probab.}, 25:1--41, 2020.

\bibitem{MQR17}
K.~Matetski, J.~Quastel, and D.~Remenik.
\newblock {The KPZ fixed point}.
\newblock {\em Acta Math.}, 227:115--203, 2021.

\bibitem{MS13}
C.B. Mendl and H.~Spohn.
\newblock {Dynamic correlators of Fermi-Pasta-Ulam chains and nonlinear
  fluctuating hydrodynamics}.
\newblock {\em Phys. Rev. Lett.}, 111:230601, 2013.

\bibitem{MS14}
C.B. Mendl and H.~Spohn.
\newblock Equilibrium time-correlation functions for one-dimensional hard-point
  systems.
\newblock {\em Phys. Rev. E}, 90:012147, 2014.

	\bibitem{MT93}
S.P.~Meyn and R.L.~Tweedie. \emph{Markov chains and stochastic stability}. Springer Verlag, London, 1993.

\bibitem{PSSS15}
V.~Popkov, A.~Schadschneider, J.~Schmidt, and G.M. Schütz.
\newblock {Fibonacci family of dynamical universality classes}.
\newblock {\em Proc. Nat. Acad. Sci. U.S.A.}, 112:12645--12650, 2015.

\bibitem{PSS15}
V.~Popkov, J.~Schmidt, and G.M. Schütz.
\newblock {Universality classes in two-component driven diffusive systems}.
\newblock {\em J. Stat. Phys.}, 160:835--860, 2015.

\bibitem{PS01}
M.~Pr{\"a}hofer and H.~Spohn.
\newblock Current fluctuations for the totally asymmetric simple exclusion
  process.
\newblock In V.~Sidoravicius, editor, {\em In and out of equilibrium}, Progress
  in Probability. Birkh{\"a}user, 2002.

\bibitem{PS02b}
M.~Pr{\"a}hofer and H.~Spohn.
\newblock Exact scaling function for one-dimensional stationary {KPZ} growth.
\newblock {\em J. Stat. Phys.}, 115:255--279, 2004.

\bibitem{RSS00}
N.~Rajewsky, T.~Sasamoto, and E.R. Speer.
\newblock Spatial particle condensation for an exclusion process on a ring.
\newblock {\em Physica A}, 279:123--142, 2000.

\bibitem{RDKKS24}
D.~Roy, A.~Dhar, K.~Khanin, M.~Kulkarni, and H.~Spohn.
\newblock {Universality in coupled stochastic Burgers systems with degenerate
  flux Jacobian}.
\newblock {\em J. Stat. Mech.}, 2024:033209, 2024.

\bibitem{SW17}
G.M. Schütz and B.~Wehefritz-Kaufmann.
\newblock {Kardar-Parisi-Zhang modes in $d$-dimensional directed polymers}.
\newblock {\em Phys. Rev. E}, 96:032119, 2017.

\bibitem{SS21}
T.~Seppäläinen and E.~Sorensen.
\newblock {Global structure of semi-infinite geodesics and competition
  interfaces in Brownian last-passage percolation}.
\newblock {\em Probab. Math. Phys.}, 4:667--760, 2023.

\bibitem{Spe94}
E.R. Speer.
\newblock {The two species totally asymmetric simple exclusion process}.
\newblock In {\em {On Three Levels: Micro, Meso and Macroscopic Approaches in
  Physics}}, pages 91--102. C.M.M. Fannes and A. Verbuere, 1994.

\bibitem{Spi70}
F.~Spitzer.
\newblock {Interaction of Markov processes}.
\newblock {\em Adv. Math.}, 5:246--290, 1970.

\bibitem{Spo13}
H.~Spohn.
\newblock {Nonlinear fluctuating hydrodynamics for anharmonic chains}.
\newblock {\em J. Stat. Phys.}, 154:1191--1227, 2014.

\bibitem{Spo16}
H.~Spohn.
\newblock {Fluctuating hydrodynamics approach to equilibrium time correlations
  for anharmonic chains}.
\newblock {\em Thermal transport in low dimensions: from statistical physics to
  nanoscale heat transfer, in Lect. Notes Phys.}, pages 107--158, 2016.

\bibitem{SS15}
H.~Spohn and G.~Stoltz.
\newblock {Nonlinear fluctuating hydrodynamics in one dimension: the case of
  two conserved fields}.
\newblock {\em J. Stat. Phys.}, 160:861--884, 2015.

\bibitem{TV03}
B.~Tóth and B.~Valkó.
\newblock{Onsager relations and Eulerian hydrodynamic limit for systems with several conservation laws}.
\newblock {\em J. Stat. Phys.}, 112:497--521, 2003.

\bibitem{TW96}
C.A. Tracy and H.~Widom.
\newblock On orthogonal and symplectic matrix ensembles.
\newblock {\em Comm. Math. Phys.}, 177:727--754, 1996.

\bibitem{vanB12}
H.~van Beijeren.
\newblock Exact results for anomalous transport in one-dimensional hamiltonian
  systems.
\newblock {\em Phys. Rev. Lett.}, 108:180601, 2012.

\end{thebibliography}

\end{document}